\newtheorem{theorem}{Theorem}[section]
\newtheorem{proposition}[theorem]{Proposition}
\newtheorem{lemma}[theorem]{Lemma}
\newtheorem{corollary}[theorem]{Corollary}
\newtheorem{definition}[theorem]{Definition}
\theoremstyle{definition}
\newtheorem{remark}{Remark}[theorem]
\newtheorem{example}[theorem]{Example}
\numberwithin{equation}{section}
\begin{document}
\title{An abstract approach to algebras of braids and ties}

\author[R. Fasano ]{Riccardo Fasano}
\address{Viale Giuseppe Garibaldi 130, 00069 Trevignano Romano (RM), ITALY}
\email{riccardo.fasano98@gmail.com}
\author[D. Fiorenza]{Domenico Fiorenza}
\address{Dipartimento di Matematica G. Castelnuovo\\Sapienza Universit\`a di Roma\\Piazzale Aldo Moro 2, 00185 Roma, ITALY}
\email{fiorenza@mat.uniroma1.it} 
\author[P. Papi]{Paolo Papi}
\address{Dipartimento di Matematica G. Castelnuovo\\Sapienza Universit\`a di Roma\\Piazzale Aldo Moro 2, 00185 Roma, ITALY}
\email{papi@mat.uniroma1.it}
\keywords{BT-algebras, Coxeter groups, Root systems}
\subjclass{20C08, 20F36, 20F55}
\begin{abstract} Generalizing work of Marin ({\it
Artin groups and Yokonuma-Hecke algebras.}
Int. Math. Res. Not. IMRN 2018, no. 13, 4022--4062), we construct in a unified way all the ``braids and ties'' algebras available in literature and new ones.\end{abstract}

\maketitle
\section{Introduction}
In the paper \cite{AY}  Aicardi and Juyumaya studied an algebra $\mathcal E_n$ defined by  generators and relations; the relations were devised  by an  abstracting procedure of a non-standard presentation for the Yokonuma-Hecke algebra
\cite{JK}, finalized to build a Markov trace on it \cite{JJ}. They also provided a diagramatic interpretation of this algebra in terms of braid diagrams on $n$ strands, by adding ``ties'' which can freely move along the strands.\par In subsequent papers, the algebra
$\mathcal E_n$, named the {\it braids and ties algebra}, has been extensively studied, e.g.  in connection with link invariants \cite{AY, AY3} or from a representation theoretic point of view \cite{RH, B}.
From its  algebraic presentation the algebra $\mathcal E_n$ can be viewed as a ``type $A$'' object, and indeed (inequivalent) generalizations to type $B$ have been introduced in \cite{F}, \cite{marin}.
The paper \cite{marin} has been another source of inspiration for the present paper. Marin introduced an  extension $\mathcal C_W$  of the Iwahori-Hecke algebra of a Coxeter system $(W,S)$
and built up a family of generically surjective morphisms of $k[B_W]\to \mathcal C_W$ (here $B_W$ is the Artin braid group attached to $W$). When $W$ is of type  $A_{n-1}$, it turns out that $\mathcal C_W\cong \mathcal E_n$. 
\par
In the present paper we proceed along the lines of \cite{marin} at a more  abstract level. The main outcome  is that the construction we outline in the next paragraphs produces in a unified way all the ``braids and ties'' algebras available in literature and new ones. \par
We start  introducing  the category  of Marin data, whose objects are   quadruples $((W,S),A,\rho,a)$ consisting of a Coxeter system $(W,S)$, of
an action of $W$ on the commutative $k$-algebra $A$,
and of a $W$-equivariant map
$
  a\colon  S \rightarrow A^{\times}.
$
Starting from  such an object one can construct a ring $\mathcal{E}^{(W,S)}(A,\rho,a)$ as the quotient of $A \rtimes k[B_W]$ by the ideal generated by Hecke type relations on $g_s,\,s\in S$ (here $w\mapsto g_w$ is the standard set-theoretical section of  the projection $B_W\to W$). 

The main result, stated in Corollary \ref{cor:free}, generalizes \cite[Theorem 3.4]{marin}: if $A$ is free over $k$, then the algebra
$\mathcal{E}^{(W,S)}(A,\rho,a)$ is freely generated as an $A$-module by the $g_w,\,w\in W$ if and only if 
$ (1 - a(s)) (1-\rho(s))= 0 $ in $\mathrm{End}_k(A)$.
\par
To apply the previous constructions to instances nearer to diagram algebras and at the same time to recover Marin's algebra $\mathcal C_W$, inspired by his notion of  {\it meaningful quotients} \cite[\S 3.4]{marin}, we introduce the notion of Marin-Iwahori-Hecke monoid, as a triple $(E,\rho,e)$ consisting of a commutative monoid $E$, a $W$-action $\rho$ on the monoid $E$, and a $W$-equivariant map $
e\colon S\to \mathrm{Idempotents}(E)$ satisfying  relation $e(s)(1-\rho(s))=0\,\forall\,s\in S$,  which guarantees that $\mathcal{E}^{(W,S)}(R[E],\rho,\hat{e})$ is free over the monoid algebra $R[E]$.  Specializing to the monoid of root subsystem $\Sigma(\Phi)$ of a root system $\Phi$, acted by $W$ by conjugation,  one recovers Marin's algebras $\mathcal C_W$ as well as  a  surjective algebra map  $\mathcal{E}^{(W,S)}(R[\Sigma(\Phi)],\rho,\hat{e})\to H_R{(W,S)}(u_s)$ to the Iwahori-Hecke algebra of $(W,S)$. 
\par Let now  $(W,S)$,  $(U,T)$ be Coxeter systems.  A Juyumaya map (cf. Definition \ref{def:j-map}), is a  pair  $(\phi,e)$ consisting of a group homomorphism $\phi : W \rightarrow U$ and a $W$-equivariant map  $e\colon  S \rightarrow \Sigma(\Phi_U)$ satisfying condition \eqref{JM}.
We can  then prove the following result, which appears as Theorem \ref{prop:juyumaya} in the body of the paper and to which  we refer for undefined notation.
\begin{theorem}
Let $(W,S)$ and $(U,T)$ be Coxeter systems and let $(\phi,e)$ be a Juyumaya map between them. Let $\rho_\phi$ denote  the $W$-action on $\Sigma(\Phi_U)$ induced by $\phi\colon W\to U$ and by the canonical action of $U$ on $\Phi_U$. Then $(\Sigma(\Phi_U)_{[e]},\rho_\phi,e)$ is a Marin-Iwahori-Hecke monoid. \\ It follows that $\mathcal{E}^{(W,S)}(R[\Sigma(\Phi_U)_{[e]}],\rho_\phi,\hat{e})$ is a free $R[\Sigma(\Phi_U)_{[e]}]$-module with basis $\{g_w | w\in W\}$. Moreover, the augmentation map $\epsilon\colon R[\Sigma(\Phi_U)_{[e]}]\to R$ induces a surjective algebra homomorphism \[
\mathcal{E}^{(W,S)}(R[\Sigma(\Phi_U)_{[e]}],\rho_\phi,\hat{e})\to H_R{(W,S)}(u_s).
\]
\end{theorem}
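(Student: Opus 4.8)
The plan is to verify the three assertions in order, the substance being concentrated in the first — that $(\Sigma(\Phi_U)_{[e]},\rho_\phi,e)$ is a Marin–Iwahori–Hecke monoid — while the other two follow formally, from Corollary~\ref{cor:free} and from functoriality of the construction $\mathcal{E}^{(W,S)}(-)$ in the Marin datum. For the first assertion I would dispatch the routine axioms quickly: $\Sigma(\Phi_U)$ is a commutative monoid under $\Psi_1\cdot\Psi_2=\langle\Psi_1\cup\Psi_2\rangle$ with unit $\varnothing$; each element of $U$, hence each $w\in W$ acting through $\phi$, induces a monoid automorphism of $\Sigma(\Phi_U)$ since it acts on $\Phi_U$ by a root-system automorphism (so it carries root subsystems to root subsystems and commutes with $\langle-\rangle$); the submonoid $\Sigma(\Phi_U)_{[e]}$ is $\rho_\phi$-stable by construction; every element of $\Sigma(\Phi_U)$ is idempotent, so $e$ takes values in $\mathrm{Idempotents}(\Sigma(\Phi_U)_{[e]})$; and $W$-equivariance of $e$ is part of the datum of a Juyumaya map. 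The only point needing an argument is the relation $e(s)\bigl(1-\rho_\phi(s)\bigr)=0$, i.e. $e(s)\cdot\Psi=e(s)\cdot\rho_\phi(s)(\Psi)$ for all $\Psi$. For this I would isolate the elementary fact that \emph{if $\Psi_0$ is a root subsystem of $\Phi_U$, $\beta\in\Psi_0$, and $\Psi$ is any root subsystem, then $\langle\Psi_0\cup\Psi\rangle=\langle\Psi_0\cup s_\beta(\Psi)\rangle$}: indeed $\langle\Psi_0\cup\Psi\rangle$ is a root subsystem containing $\beta$, hence $s_\beta$-stable, so $s_\beta(\Psi)\subseteq\langle\Psi_0\cup\Psi\rangle$ and therefore $\langle\Psi_0\cup s_\beta(\Psi)\rangle\subseteq\langle\Psi_0\cup\Psi\rangle$; running this again with $\Psi$ replaced by $s_\beta(\Psi)$ and using $s_\beta^2=1$ yields the reverse inclusion. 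Iterating over a generating set of reflections gives $\langle\Psi_0\cup\Psi\rangle=\langle\Psi_0\cup w(\Psi)\rangle$ for every $w$ in the reflection subgroup $\langle s_\beta:\beta\in\Psi_0\rangle\leq U$. Condition \eqref{JM} is exactly what makes $\phi(s)$ act on $\Phi_U$ as an element of $W_{e(s)}:=\langle s_\beta:\beta\in e(s)\rangle$, and taking $\Psi_0=e(s)$ we conclude $e(s)\cdot\Psi=e(s)\cdot\rho_\phi(s)(\Psi)$ for every $\Psi\in\Sigma(\Phi_U)$, a fortiori for $\Psi\in\Sigma(\Phi_U)_{[e]}$; this is the required relation, read in $R[\Sigma(\Phi_U)_{[e]}]$. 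I expect this step to be the main obstacle, precisely because it is where one must correctly unwind \eqref{JM} and match it against the notion of root subsystem underlying $\Sigma(\Phi_U)$.

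Granting this, the second assertion is immediate from Corollary~\ref{cor:free}: $R[\Sigma(\Phi_U)_{[e]}]$ is free over $R$ with the monoid itself as a basis, and, recalling $\hat e(s)=(1-e(s))+u_s\,e(s)$ so that $1-\hat e(s)=(1-u_s)\,e(s)$, the relation just proved gives $\bigl(1-\hat e(s)\bigr)\bigl(1-\rho_\phi(s)\bigr)=(1-u_s)\,e(s)\bigl(1-\rho_\phi(s)\bigr)=0$ in $\mathrm{End}_R\bigl(R[\Sigma(\Phi_U)_{[e]}]\bigr)$; hence $\mathcal{E}^{(W,S)}(R[\Sigma(\Phi_U)_{[e]}],\rho_\phi,\hat e)$ is free over $R[\Sigma(\Phi_U)_{[e]}]$ with basis $\{g_w:w\in W\}$.

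For the last assertion, one observes that the augmentation $\epsilon\colon R[\Sigma(\Phi_U)_{[e]}]\to R$, sending every $\Psi$ to $1$, is a $k$-algebra map which intertwines $\rho_\phi$ with the trivial $W$-action (as $\epsilon(\rho_\phi(w)(\Psi))=1=\epsilon(\Psi)$) and satisfies $\epsilon(\hat e(s))=u_s$; so it is a morphism of Marin data from $\bigl((W,S),R[\Sigma(\Phi_U)_{[e]}],\rho_\phi,\hat e\bigr)$ to $\bigl((W,S),R,\text{trivial},s\mapsto u_s\bigr)$. Functoriality of $\mathcal{E}^{(W,S)}(-)$ — immediate from its presentation as a quotient of $A\rtimes k[B_W]$ — yields an algebra homomorphism
\[
\mathcal{E}^{(W,S)}(R[\Sigma(\Phi_U)_{[e]}],\rho_\phi,\hat e)\longrightarrow\mathcal{E}^{(W,S)}(R,\text{trivial},s\mapsto u_s),
\]
whose target is, by the definition of $\mathcal{E}^{(W,S)}$ with trivial coefficient algebra, the quotient of $R[B_W]$ by the Hecke-type relations on the $g_s$, i.e. the Iwahori–Hecke algebra $H_R(W,S)(u_s)$. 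Surjectivity is clear: $\mathcal{E}^{(W,S)}(R[\Sigma(\Phi_U)_{[e]}],\rho_\phi,\hat e)$ is generated as a ring by the image of $R[\Sigma(\Phi_U)_{[e]}]$ together with the $g_s$, the former surjects onto $R$ and each $g_s$ maps to $g_s$, so the image contains a ring-generating set of $H_R(W,S)(u_s)$ (equivalently, on the basis of the second assertion the map sends $g_w\mapsto g_w$ and $R[\Sigma(\Phi_U)_{[e]}]\twoheadrightarrow R$, so its image is the whole $R$-span of the $g_w$).
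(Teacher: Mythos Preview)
Your argument for the key relation $e(s)\bigl(1-\rho_\phi(s)\bigr)=0$ contains a genuine gap. You correctly prove that if $w\in W_{\Psi_0}:=\langle s_\beta:\beta\in\Psi_0\rangle$ then $\langle\Psi_0\cup\Psi\rangle=\langle\Psi_0\cup w(\Psi)\rangle$ for every $\Psi$. But your assertion that ``condition \eqref{JM} is exactly what makes $\phi(s)$ act on $\Phi_U$ as an element of $W_{e(s)}$'' is not justified, and in fact is false. Condition \eqref{JM} says only that $e_{s_1}\cdot(w.e_{s_2})=e_{s_1}\cdot((s_1w).e_{s_2})$ for the specific elements $w.e_{s_2}$; it does not force $\phi(s_1)$ to lie in the reflection subgroup $W_{e(s_1)}$. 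A small counterexample: take $(W,S)=(\mathbb{Z}/2,\{s\})$, $(U,T)$ of type $A_3$ with simple roots $\alpha_1,\alpha_2,\alpha_3$, $\phi(s)=s_{\alpha_1}$, and $e(s)=\langle\alpha_3\rangle$. Then $e$ is $W$-equivariant (since $s_{\alpha_1}.\alpha_3=\alpha_3$), \eqref{JM} holds trivially (the submonoid $\Sigma(\Phi_U)_{[e]}$ is just $\{\varnothing,\langle\alpha_3\rangle\}$), yet $\phi(s)=s_{\alpha_1}\notin W_{e(s)}=\langle s_{\alpha_3}\rangle$. Worse, your stronger conclusion that $e(s)\cdot\Psi=e(s)\cdot\rho_\phi(s)(\Psi)$ for \emph{all} $\Psi\in\Sigma(\Phi_U)$ is false here: with $\Psi=\langle\alpha_2\rangle$ one computes $\langle\alpha_3,\alpha_2\rangle\neq\langle\alpha_3,\alpha_1+\alpha_2\rangle$.

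The paper's route avoids this entirely. It observes that \eqref{JM}, rewritten via the product in $\Sigma(\Phi_U)$, is literally the relation $e_{s_1}\cdot x=e_{s_1}\cdot(s_1.x)$ for $x$ ranging over the \emph{generators} $w.e_{s_2}$ of $\Sigma(\Phi_U)_{[e]}$; this is condition \eqref{eq:mih-generators-e}. Then Lemma~\ref{lemma:generators} (a short multiplicativity argument using only commutativity of the monoid) upgrades this to all of $\Sigma(\Phi_U)_{[e]}$, which is \eqref{eq:mih}. So the fix is simply to replace your root-subsystem lemma by the observation that \eqref{JM} \emph{is} the required identity on generators, and then invoke Lemma~\ref{lemma:generators}. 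Your treatment of the second and third assertions is fine and matches the paper's (via Remark~\ref{rem:mih}/Corollary~\ref{cor:free} and Example~\ref{example:monoid}).
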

We then proceed, through Lemmas \ref{lemma:to-produce-triples}-\ref{lem:composition}, to provide tools  yielding  examples of Juyumaya maps. The examples are displayed in Section 3; although we proceed in algebraic terms, our treatment has been guided by the geometric point of view of \cite{A}.
In the final Section 4 we show how to produce  from our general construction several  braids and ties algebras; in general if one takes the Juyumaya  map $\phi$ to be the identity of a Coxeter system $(W,S)$ one recovers Marin's $\mathcal C_W$ algebras, and in particular the braids and ties algebra of type $A$ \cite{AY} and Marin's  braids and ties algebra of type $B$.
Non trivial choiches of $\phi$ give rise to  Flores'  braids and ties algebra of type $B$ \cite{F}, to a braids and ties algebra of type $D$ (possibly new) and more examples arising from affine Weyl groups. A few directions for future research are discussed in the final section.\par
\vskip5pt
\noindent{\sl Notational conventions.} Sometimes, for better readability of formulas, we shall denote the image of an element $x$ through a map $f$ by $f_x$ rather than by $f(x)$. For action $\rho$ of a group $G$ on a set $X$ we shall write $g.x$ to mean $\rho_g(x)$.\\
We denote by $k$ a commutative unital ring that we shall use as a ground ring.

\section{Marin Algebras}

In this section we will consider a fixed Coxeter system $(W,S)$ and a commutative $k$-algebra $A$, where $k$ is a commutative ring. We denote by $\rho_{\mathrm{geom}}\colon W\to O(V_W)$ the standard geometric representation of $(W,S)$ (see e.g. \cite[Section 5.3]{H}). When convenient, we use the same symbol $s$ to denote the basis element of $V_W$ corresponding to the generator $s$ of $(W,S)$. We also denote by $\sigma_v\colon V_W\to V_W$ the reflection associated with a unit vector $v$, so that $\rho_{\mathrm{geom}}(s)=\sigma_s$.
 Finally, we denote by $\Phi_W$ (or simply by $\Phi$ when no confusion is possible) the root system of $(W,S)$, by $\Phi^+$ the subset of positive roots with respect to the basis $S$ of $V_W$, and by $\langle \alpha\rangle $ the root subsystem $\{\alpha,-\alpha\}$ of $\Phi$ (cf. \cite[\S 5.3]{H}).
 (According to our notational convention, if $\alpha$ is a simple root attached to $s\in S$,  we write interchangeably $\langle \alpha\rangle$ or $\langle s \rangle$).
%
 \begin{definition} Let $G$ be a group and let $X$ and $Y$ be $G$-sets,  with  left $G$-action  denoted by $(g,x)\mapsto g. x$. Let $X_0\subseteq X$ and $Y_0\subseteq Y$ be subsets of $X$ and $Y$, not necessarily $G$-subsets, and let $f\colon X_0\to Y_0$ be a map. We will say that $f$ is $G$-equivariant if for any triple $(x_1,x_2,g)$ in $X_0\times X_0\times G$ with $g. x_1=x_2$ we have $g. f(x_1)=f(x_2)$.
 \end{definition}

\begin{example}\label{ex:same-stabilizer}
Given a Coxeter system $(W,S)$, both the group $W$, with the $W$-action given by conjugation, and the set $\Sigma(\Phi_W)$ of root subsystems of $\Phi_W$ with the $W$-action induced by the standard geometric representation of $(W,S)$ are $W$-sets. The map
$
S\to \Sigma(\Phi_W),\,
s\mapsto \langle s\rangle,
$,
where $\langle s\rangle$ is the root subsystem generated by $s$, i.e. $\{s,-s\}$, is $W$-equivariant. Indeed, if a triple of elemens $s_1,s_2,w$ in $S\times S\times W$ satisfies $s_2=ws_1w^{-1}$, then we have $\sigma_{s_2}=\rho_{\mathrm{geom}}(w)\sigma_{s_1}\rho_{\mathrm{geom}}(w)^{-1}=\sigma_{w. s_1}$, and so $\langle s_2\rangle =\langle w. s_1\rangle=w. \langle s_1\rangle$. Moreover, since standard geometric representation is faithful, this shows that  $s_2=ws_1w^{-1}$ if and only if $\langle s_2\rangle =w. \langle s_1\rangle$. In particular, for any $s\in S$ the stabilizer of $\langle s \rangle$ for the $W$-action on root subsystems of $\Phi$ coincides with the stabilizer of $s$ for the $W$-action on itself by conjugation, i.e., with the centralizer $C_W(s)$ of $s$ in $W$. 
\end{example}

\begin{definition}
A {\it Marin datum} is  a quadruple $((W,S),A,\rho,a)$ consisting of a Coxeter system $(W,S)$, of
an action of $W$ on the commutative $k$-algebra $A$,
$
\rho \colon  W \rightarrow \mathrm{Aut}_{k\text{-}\mathrm{alg}}(A),
$
and of a $W$-equivariant map
$
  a\colon  S \rightarrow A^{\times},\,a(s)= a_s
$
where
 $A^{\times}$ is the group of units of $A$. \end{definition}
 Marin data naturally form a category: a morphism $((W,S),A,\rho,a)\to ((U,T),$ $B,\eta,b)$ is a pair $(\phi,\epsilon)$ consisting of a morphism of Coxeter systems $\phi\colon (W,S)\to (U,T)$ and of a homomorphism of $k$-algebras $f\colon A\to B$ such that $\epsilon$ is $W$-equivariant with respect to the $W$-action $\rho$ on $A$ and the $W$-action $\eta\circ\phi$ on $B$, and such that $\epsilon \circ a=b\circ \phi$, where we use that $\phi$ maps $S$ to $T$ by definition of morphism of Coxeter systems.

\begin{remark}
Since every element $s$ of $S$ is a fixed point for the conjugacy action of $s$ on $W$, the equivariancy of $a$ implies $s. a_s=a_s$ for any $s\in S$.
\end{remark}

 Let $B_W$ be the Artin braid group attached to the Coxeter system $(W,S)$. Recall that there is a canonical (set-theoretical) section $w\mapsto g_w$ for the projection $\pi:B_W\to W$.
Let $k[B_W]$ be the group ring of  $B_W$ over $k$. The $k$-algebra $A$ is a module for $k[B_W]$ via $\pi$, so we can form the semidirect product $A \rtimes k[B_W]$, i.e., the algebra generated by the elements $c$ of $A$ and $g_w$ of $B_W$ with the relations
\[
g_w c g_{w}^{-1}=w. c,
\]
i.e., equivalently,
$
g_w c =(w. c)g_{w}.
$
\begin{definition}
    The  \textit{Marin ring} $\mathcal{E}^{(W,S)}(A,\rho,a)$ is  the quotient of $A \rtimes k[B_W]$ by the ideal generated by the elements
    \begin{equation}\label{H1}
        g_s^2 - a_s - (1 - a_s)g_s \quad \text{for all }s\in S. 
    \end{equation}
    \end{definition}

\begin{remark}\label{rem:invertibility}
Even without assuming the invertibility of $g_s$ from the group structure of $B_W$, the equation $g_s^2 = a_s + (1 - a_s)g_s$ together with the invertibility of $a_s$ in $A$ implies the invertibility of $g_s$ in $\mathcal{E}^{(W,S)}(A,\rho,a)$. Indeed, the element
${a_s}^{-1}g_s-a_s^{-1}(1-a_s)$ is the inverse of $g_s$ in $\mathcal{E}^{(W,S)}(A,\rho,a)$.
\end{remark}
    \begin{remark}\label{2var} Mimicking the construction of generic Hecke algebras given e.g. in \cite[\S 7.1]{H}, one can define a more general Marin ring 
    \textit{Marin ring} $\mathcal{E}^{(W,S)}(A,\rho,b,c)$, where $b,c: S\to A^\times$ are $W$-equivariant maps and relation \eqref{H1} is replaced by 
        \begin{equation}\label{H2}
        g_s^2 - b_s - (1 - c_s)g_s \quad \text{for all }s\in S. 
    \end{equation}
    As for  generic Hecke algebras, one reduces \eqref{H2} to \eqref{H1} a suitable rescaling $g_s\rightsquigarrow  \lambda_sg_s$. See Remark \ref{2p} for an example of this rescaling.
\end{remark}
    
\begin{remark}\label{rem:presentation}
When $A$ is a free $k$-module, recalling the relations defining the Artin ring $B_W$ one obtains a presentation of $\mathcal{E}^{(W,S)}(A,\rho,a)$ as follows: \break $\mathcal{E}^{(W,S)}(A,\rho,a)$ is the quotient of the free free $k$-algebra generated by  $A$ and by $\{g_s\}_{s\in S}$ by the relations defining $A$ as a $k$-algebra and
\begin{align*} 
g_s c &=(s. c)g_{s},\quad c\in A,\,s\in S\\
\underbrace{g_{s_i}g_{s_j}g_{s_i} \cdots}_\text{$m_{i,j}$ times}&=\underbrace{g_{s_j}g_{s_i}g_{s_j} \cdots}_\text{$m_{i,j}$ times}, \\
g_s^2 &= a_s + (1 - a_s)g_s, \quad s\in S.
\end{align*}
It follows that the datum of a $k$-algebra homomorphism $\mu\colon \mathcal{E}^{(W,S)}(A,\rho,a)\to R$ is equivalently the datum of a $k$-algebra homomorphism $\mu_A\colon A\to R$ together with a choice of elements $r_s$ in $R$ such that
\begin{align*} 
r_s \mu_A(c) &=\mu_A(s. c)r_{s},\qquad c\in A,\\
\underbrace{r_{s_i}r_{s_j}r_{s_i} \cdots}_\text{$m_{i,j}$ times}&=\underbrace{r_{s_j}r_{s_i}r_{s_j} \cdots}_\text{$m_{i,j}$ times}, \\
{r_s}^2 &= \mu_A(a_s) + \mu_A(1 - a_s)r_s.
\end{align*}
\end{remark}

\begin{remark}\label{rem:rescale}
 Let $\{\lambda_s\}_s\in S$ be a collection of invertible elements in $A$ such that $\lambda_{s_i}=\lambda_{s_j}$ whenever $m_{i,j}$ is odd. All of the relations defining  $\mathcal{E}^{(W,S)}(A,\rho,a)$ are homogeneous in the $g_s$'s, with the exception of the relation $g_s^2 = {a_s} + (1 - {a_s})g_s$, the relation $g_s c =(s. c)g_{s}$ is homogeneous in the single variable $g_s$, and the braid-type relations with an even $m_{i,j}$ are separately homogeneous in $g_{s_i}$ and $g_{s_j}$. Therefore, we can operate the rescaling $g_s\rightsquigarrow -\lambda_s g_s$ to obtain a presentation of $\mathcal{E}^{(W,S)}(A,\rho,a)$ with relations
\begin{align*} 
g_s c &=(s. c)g_{s},\\
\underbrace{g_{s_i}g_{s_j}g_{s_i} \cdots}_\text{$m_{i,j}$ times}&=\underbrace{g_{s_j}g_{s_i}g_{s_j} \cdots}_\text{$m_{i,j}$ times}, \\
g_s^2 &= {\lambda_s}^{-2}a_s -  {\lambda_s}^{-1}(1-{a_s})g_s.
\end{align*}
\end{remark}

\begin{remark}\label{rem:commutes}
Since $B_W$ acts on $A$ via its projection on $W$, we have that the elements  $g_s^2$ with $s\in S$ commute with the elements of $A$ in $A \rtimes k[B_W]$ and so in $\mathcal{E}^{(W,S)}(A,\rho,a)$. Indeed, for any $c\in A$,
\begin{align*}
g_s^2c&=(g_s^2cg_s^{-2})g_s^2=(g_s(g_s cg_s^{-1})g_s^{-1})g_s^2\\
&=(g_s(s. c)g_s^{-1})g_s^2=(s.(s. c))g_s^2=(s^2. c)g_s^2=cg_s^{2}.
\end{align*}
As an immediate consequence we have
\[g_scg_s^{-1}=g_s^{-1}g_s^2cg_s^{-1}=g_s^{-1}cg_s^2g_s^{-1}=g_s^{-1}cg_s,\]
for any $s\in \Phi$ and any $c$ in $A$.
\end{remark}
\begin{example}[The Iwahori-Hecke algebra]
Let $k=\mathbb{Z}$. Consider the ring of Laurent polynomials  $\mathbb{Z}[x_{\langle s\rangle} ,x_{\langle s\rangle}^{-1}]$ in variables $x_{\langle s\rangle}$ indexed by the root subsystems ${\langle s\rangle}$ with $s\in S$. The $W$-action on root subsystems of $\Phi$ induces an action on  $\mathbb{Z}[x_{\langle s\rangle} ,x_{\langle s\rangle}^{-1}]$. Let $A_{IH}(W,S)$ be the ring of coinvariants. By definition of coinvariants, the $W$-action on $A_{IH}(W,S)$ is trivial. The map 
\begin{align*}
\hat{u}\colon S &\mapsto A_{IH}(W,S)^\times\\
s & \mapsto [x_{\langle s\rangle}],
\end{align*}
where $ [x_{\langle s\rangle}]$ is the image of $x_{\langle s\rangle}$ in $A_{IH}(W,S)$,
 is $W$-equivariant: if $s_2=w s_1w^{-1}$ with $w\in W$, then we have $\langle s_2\rangle = w. \langle s_1\rangle$ and so
\[
\hat{u}_{s_2}=[x_{\langle s_2\rangle}]=[x_{w. \langle s_1\rangle}]=[w. x_{\langle s_1\rangle}]=[x_{\langle s_1\rangle}]=\hat{u}_{s_1}.
\]
The Marin algebra $
\mathcal{E}^{(W,S)}(A_{IH}(W,S), \rho_{\mathrm{triv}},\hat{u})
$
is then the  Iwahori--Hecke algebra $H{(W,S)}(\hat{u}_s)$. If $R$ is any ring with trivial $W$-action and $u\colon S\to R^\times$ is a $W$-equivariant map, then we have a specialization map $\epsilon\colon A_{IH}(W,S)\to R$ mapping $\hat{u}_s$ to $u_s$; the Marin algebra  is $\mathcal{E}^{(W,S)}(R,\rho_{\mathrm{triv}},u)$ is the Iwahori--Hecke algebra $H_R{(W,S)}(u_s)$ and the specialization map induces a morphism of algebras $H{(W,S)}(\hat{u}_s)\to H_R{(W,S)}(u_s)$.
\end{example}
The specialization map $\epsilon\colon A_{IH}(W,S)\to R$ in the previous example is clearly $W$-equivariant and satisfies $\epsilon\circ \hat{u}=u$. More generally we have the following.
\begin{lemma}\label{lemma:epsilon}
Let $B$ a $k$-algebra with a $W$-action $\eta\colon W\to \mathrm{Aut}_k(B)$ and a $W$-equivariant map $b\colon \Phi\to B^\times$, and let $\epsilon\colon A\to B$ be a $W$-equivariant algebra homomorphism such that $b=\epsilon\circ a$. Then one has an induced algebra homomorphism
\[
\epsilon^{\mathcal{E}}\colon \mathcal{E}^{(W,S)}(A,\rho,a)\to \mathcal{E}^{(W,S)}(B,\eta,b)
\]
that is the identity on $k[B_W]$ and coincides with $\epsilon$ on $A$. In particular, if $\epsilon$ is surjective, then also $\epsilon^{\mathcal{E}}$ is surjective.
\end{lemma}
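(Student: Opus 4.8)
The plan is to build $\epsilon^{\mathcal{E}}$ in two stages: first a $k$-algebra homomorphism $\tilde\epsilon\colon A\rtimes k[B_W]\to B\rtimes k[B_W]$ of the ambient semidirect products, then a check that it passes to the Hecke-type quotients. For the first stage I would use that $A\rtimes k[B_W]$ is the $k$-algebra generated by $A$ and the $g_w$ ($w\in W$) subject to the relations of $A$, the relations of $k[B_W]$, and the straightening relations $g_w c=(w.c)g_w$; giving a $k$-algebra homomorphism out of it amounts to giving a $k$-algebra homomorphism from $A$ and a ring homomorphism from $k[B_W]$ that together respect the straightening relations. I would take these to be $\epsilon$ and $\mathrm{id}_{k[B_W]}$ respectively. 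The only compatibility to verify is that $g_w\,\epsilon(c)=(w.\epsilon(c))\,g_w$ holds in $B\rtimes k[B_W]$ for all $c\in A$ and $w\in W$; since the $W$-action used to form this semidirect product is $\eta$ composed with $\pi\colon B_W\to W$, and $\epsilon$ is $W$-equivariant, i.e. $\epsilon(w.c)=\eta(w)(\epsilon(c))=w.\epsilon(c)$, this is exactly the straightening relation in $B\rtimes k[B_W]$ applied to the element $\epsilon(c)$.

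For the second stage I would observe that $\tilde\epsilon$ is the identity on the $g_s$ and restricts to $\epsilon$ on $A$, so it sends the generator $g_s^2-a_s-(1-a_s)g_s$ of the defining ideal of $\mathcal{E}^{(W,S)}(A,\rho,a)$ to $g_s^2-\epsilon(a_s)-(1-\epsilon(a_s))g_s$; by hypothesis $\epsilon(a_s)=b_s$, so this is precisely the generator $g_s^2-b_s-(1-b_s)g_s$ of the defining ideal of $\mathcal{E}^{(W,S)}(B,\eta,b)$. Since a ring homomorphism carries the ideal generated by a subset into the ideal generated by the image of that subset, $\tilde\epsilon$ descends to the quotients, yielding the desired $\epsilon^{\mathcal{E}}\colon \mathcal{E}^{(W,S)}(A,\rho,a)\to \mathcal{E}^{(W,S)}(B,\eta,b)$, which by construction is the identity on the image of $k[B_W]$ and coincides with $\epsilon$ on the image of $A$.

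Finally, surjectivity is immediate: $\mathcal{E}^{(W,S)}(B,\eta,b)$ is generated as a $k$-algebra by (the images of) $B$ together with the $g_w$, $w\in W$, and all of these lie in the image of $\epsilon^{\mathcal{E}}$ as soon as $\epsilon$ is surjective. I do not expect a genuine obstacle here; the only point requiring a little care is the bookkeeping in the first stage, namely making sure the straightening relation in the target is read with respect to the action $\eta\circ\pi$, so that it is exactly the $W$-equivariance of $\epsilon$ (and nothing stronger) that is needed to make $\tilde\epsilon$ well defined.
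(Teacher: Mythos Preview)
Your argument is correct. The paper actually states this lemma without proof, treating it as routine; your two-stage construction (lift $\epsilon$ together with $\mathrm{id}_{k[B_W]}$ to a map of semidirect products, then check it sends the Hecke-type generators of the source ideal to those of the target) is precisely the standard verification one would supply, and all the checks you outline go through as written.
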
 
Even more generally, we have the functoriality of the construction of the Marin algebra with respect to morphism of Marin data, of which Lemma \ref{lemma:epsilon} is the particular case corresponding to a morphism of Marin data with the same Coxeter system and with the identity map from this system to itself. In the general situation we have the following.
\begin{lemma}\label{lemma:epsilon2}
Let $(\phi,\epsilon)\colon ((W,S),\rho,a)\to ((U,T),\eta,b)$ be a morphism of Marin data. Then one has an induced algebra homomorphism
\[
(\phi,\epsilon)^{\mathcal{E}}\colon \mathcal{E}^{(W,S)}(A,\rho,a)\to \mathcal{E}^{(U,T)}(B,\eta,b),
\]
that is the morphism induced by $\phi$ on $k[B_W]$ and coincides with $\epsilon$ on $A$. In particular, if $\phi\colon S\to T$ and $\epsilon$ are surjective, then also $(\phi,\epsilon)^{\mathcal{E}}$ is surjective.
\end{lemma}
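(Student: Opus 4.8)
The plan is to build $(\phi,\epsilon)^{\mathcal{E}}$ in two steps, exactly paralleling the proof of Lemma~\ref{lemma:epsilon}: first as a homomorphism of semidirect products, then by descent to the Marin-ring quotients. Since $\phi\colon(W,S)\to(U,T)$ is a morphism of Coxeter systems it induces a group homomorphism $B_\phi\colon B_W\to B_U$ with $B_\phi(g_s)=g_{\phi(s)}$ for all $s\in S$, and hence a $k$-algebra homomorphism $k[B_\phi]\colon k[B_W]\to k[B_U]$; note that $\pi_U\circ B_\phi=\phi\circ\pi_W$, since this already holds on the Artin generators. I would then let
\[
\Psi\colon A\rtimes k[B_W]\longrightarrow B\rtimes k[B_U]
\]
be the $k$-algebra homomorphism that restricts to $\epsilon$ on $A$ and to $k[B_\phi]$ on $k[B_W]$.

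The only point requiring an argument is that such a $\Psi$ is well defined, i.e. that it respects the defining relation $g_wcg_w^{-1}=w.c$ of the semidirect product. Applying $\Psi$, and using that $B_U$ acts on $B$ through $\pi_U$ followed by $\eta$, the left-hand side becomes
\[
B_\phi(g_w)\,\epsilon(c)\,B_\phi(g_w)^{-1}=\pi_U\bigl(B_\phi(g_w)\bigr).\,\epsilon(c)=\eta_{\phi(w)}\bigl(\epsilon(c)\bigr),
\]
while the right-hand side becomes $\epsilon(\rho_w(c))$. These agree precisely because $\epsilon$ is $W$-equivariant with respect to the action $\rho$ on $A$ and the action $\eta\circ\phi$ on $B$, which is part of the definition of a morphism of Marin data. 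This matching of the two $W$-actions is the conceptual core of the lemma, even though the computation itself is short; it is also the place where keeping careful track of which action is in play ($\rho$ versus $\eta\circ\phi$) matters.

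Next I would check that $\Psi$ sends the ideal of $A\rtimes k[B_W]$ defining $\mathcal{E}^{(W,S)}(A,\rho,a)$ into the ideal of $B\rtimes k[B_U]$ defining $\mathcal{E}^{(U,T)}(B,\eta,b)$. It is enough to evaluate $\Psi$ on the generators~\eqref{H1}: since $\epsilon\circ a=b\circ\phi$ gives $\epsilon(a_s)=b_{\phi(s)}$, one gets
\[
\Psi\bigl(g_s^{2}-a_s-(1-a_s)g_s\bigr)=g_t^{2}-b_t-(1-b_t)g_t,\qquad t=\phi(s)\in T,
\]
which is again a generator of type~\eqref{H1} for the target. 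Hence $\Psi$ descends to a $k$-algebra homomorphism $(\phi,\epsilon)^{\mathcal{E}}\colon\mathcal{E}^{(W,S)}(A,\rho,a)\to\mathcal{E}^{(U,T)}(B,\eta,b)$, which by construction is $k[B_\phi]$ on $k[B_W]$ and $\epsilon$ on $A$.

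For the surjectivity statement, observe that $\mathcal{E}^{(U,T)}(B,\eta,b)$ is generated as a $k$-algebra by the images of $B$ and of $B_U$, and $B_U$ is generated as a group by $\{g_t^{\pm1}:t\in T\}$. If $\epsilon$ is surjective then $B$ lies in the image of $(\phi,\epsilon)^{\mathcal{E}}$; if $\phi$ maps $S$ onto $T$ then every $g_t$ is $B_\phi(g_s)$ for some $s\in S$, so every $g_t^{\pm1}$, and hence all of $B_U$, lies in the image. Therefore $(\phi,\epsilon)^{\mathcal{E}}$ is surjective. The lemma is thus essentially formal; the two things to be careful about are, besides the bookkeeping of the two $W$-actions noted above, that a morphism of Coxeter systems really does induce a compatible morphism $B_\phi$ of the associated Artin groups (so that $\Psi$ exists on $k[B_W]$ at all) — this is built into the notion of morphism of Coxeter systems and is the only ingredient here that is not purely formal.
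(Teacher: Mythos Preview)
The paper states this lemma without proof, treating it as a routine functoriality check; your argument supplies exactly the expected verification and is correct. The only point worth flagging is that you write $g_w$ where a general element $g\in B_W$ would do (and indeed $B_\phi(g_w)$ need not equal $g_{\phi(w)}$ for $w\notin S$), but your computation only uses $\pi_U(B_\phi(g))=\phi(\pi_W(g))$, which holds for all $g\in B_W$, so this is purely notational.
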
 

\begin{example}\label{example:lift}
Let $R$ be a $k$-algebra with trivial $W$-action and $u\colon S\to R^\times$ a $W$-equivariant map. Let $(A,\rho,a)$ be data defining a Marin algebra for $(W,S)$. Then a $W$-equivariant homomorphism $\epsilon\colon A\to R$ such that $\epsilon\circ a=u$ induces an algebra homomorphism
$
\epsilon^{\mathcal{E}}\colon \mathcal{E}^{(W,S)}(A,\rho,a)\to H_R{(W,S)}(u_s).
$
If moreover $A$ is an $R$-algebra and $\epsilon$ is an augmentation, i.e., $R\to A\xrightarrow{\epsilon}R$ is the identity of $R$, then $ \mathcal{E}^{(W,S)}(A,\rho,a)$ is an $R$-algebra and $\epsilon^{\mathcal{E}}$ is a surjective morphism of $R$-algebras. In particular, this means that $H_R{(W,S)}(u_s)$ is realized as a quotient of $\mathcal{E}^{(W,S)}(A,\rho,a)$.
\end{example}
The above example suggests the following.
\begin{definition}
A {\it Marin lift} of the  Iwahori--Hecke algebra $H_R{(W,S)}(u_s)$ is a surjective morphism  $\epsilon^{\mathcal{E}}\colon \mathcal{E}^{(W,S)}(A,\rho,a)\to H_R{(W,S)}(u_s)$ as in example \ref{example:lift}.
\end{definition}

\begin{example}\label{example:monoid}
Let $(R,u)$ be as in the definition the  Iwahori--Hecke algebra \break $H_R{(W,S)}(u_s)$, and let $E$ be a commutative monoid endowed with a $W$-action $\rho$ and a $W$-equivariant map
$
e\colon S\to \mathrm{Idempotents}(E).
$
The monoid $R$-algebra $R[E]$ inherits a $W$-action from the $W$-action on $E$. The map
\begin{equation}\label{he}
\hat{e}=1 + (u - 1)e \colon S\to R[E]
\end{equation}
is clearly $W$-equivariant. Moreover it takes values in $R[E]^\times$, since
$$(1 + (u_s\ - 1)e_s)(1+u_s^{-1}(1-u_s)e_s)=1.$$
So we can form the Marin algebra $\mathcal{E}^{(W,S)}(E,\rho,\hat{\epsilon})$. The algebra $R[E]$ has a canonical augmentation $\epsilon\colon R[E]\to R$ sending all the elements of $E$ to $1$. This augmentation is manifestly $W$-equivariant; moreover $\epsilon\circ \hat{e}=u$, so we have the surjective morphism
$
\epsilon^{\mathcal{E}}\colon \mathcal{E}^{(W,S)}(R[E],\rho,\hat{e})\to H_R{(W,S)}(u_s)
$
realizing a lift of the Iwahori--Hecke algebra.
\end{example}

The following fact is well-known.
\begin{lemma}[Matsumoto]
Let $w$ be an element in $W$ and let $w = s_1 \dots s_k$, with $s_i \in S$,  be a reduced expression for $w$. The element $g_{s_1} \dots g_{s_k} \in \mathcal{E}^{(W,S)}(A,\rho,a)$ does not depend on the particular reduced expression for $w$. Therefore one can define the element $g_w$ in $\mathcal{E}^{(W,S)}(A,\rho,a)$ as $g_w = g_{s_1} \dots g_{s_k}$ for some (hence any) reduced expression for $w$.
\end{lemma}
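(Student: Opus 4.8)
The plan is to reduce the statement to the classical theorem of Matsumoto and Tits on reduced words in Coxeter groups. Recall that this theorem asserts that any two reduced expressions $w = s_1\cdots s_k = s_1'\cdots s_k'$ for the same element $w\in W$ are connected by a finite chain of \emph{braid moves}: each such move replaces, inside the word, a \emph{contiguous} factor of the form $\underbrace{s_is_js_i\cdots}_{m_{i,j}\text{ times}}$ by $\underbrace{s_js_is_j\cdots}_{m_{i,j}\text{ times}}$ (with $i\neq j$ and $m_{i,j}<\infty$), leaving the remaining letters untouched. Since all reduced expressions in such a chain have the same length $k$, it suffices to prove that a single braid move does not alter the product $g_{s_1}\cdots g_{s_k}\in \mathcal{E}^{(W,S)}(A,\rho,a)$.

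The key observation is that the elements $g_s$, $s\in S$, satisfy in $\mathcal{E}^{(W,S)}(A,\rho,a)$ the braid relations
\[
\underbrace{g_{s_i}g_{s_j}g_{s_i}\cdots}_{m_{i,j}\text{ times}} \;=\; \underbrace{g_{s_j}g_{s_i}g_{s_j}\cdots}_{m_{i,j}\text{ times}} .
\]
Indeed, these relations already hold in the Artin braid group $B_W$, being part of its defining presentation; hence they hold in the group ring $k[B_W]$, hence in the semidirect product $A\rtimes k[B_W]$, and therefore also in its quotient $\mathcal{E}^{(W,S)}(A,\rho,a)$. Notice that none of the Hecke-type relations \eqref{H1} is used at this point.

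Given a reduced word $s_1\cdots s_k$ and a braid move applied to the contiguous factor $s_a s_{a+1}\cdots s_{b}$ (which by hypothesis equals $\underbrace{s_is_js_i\cdots}_{m_{i,j}\text{ times}}$), I would factor
\[
g_{s_1}\cdots g_{s_k} = \bigl(g_{s_1}\cdots g_{s_{a-1}}\bigr)\,\Bigl(\underbrace{g_{s_i}g_{s_j}g_{s_i}\cdots}_{m_{i,j}\text{ times}}\Bigr)\,\bigl(g_{s_{b+1}}\cdots g_{s_k}\bigr),
\]
replace the middle factor using the braid relation established above, and recognize the outcome as $g_{s_1'}\cdots g_{s_k'}$ for the new word. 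Iterating this along the Matsumoto–Tits chain connecting the two given reduced expressions shows that $g_{s_1}\cdots g_{s_k}$ depends only on $w$, which justifies the definition $g_w := g_{s_1}\cdots g_{s_k}$. There is essentially no obstacle here beyond invoking Matsumoto–Tits; the only point requiring a word of care is that braid moves act on a contiguous subword while fixing the rest, so that the product can genuinely be factored as above and the substitution performed locally. (Alternatively, one can avoid even this bookkeeping by noting that the well-definedness of $g_w$ already holds in $B_W$, where $w\mapsto g_w$ is the canonical set-theoretic section of $\pi\colon B_W\to W$, and transporting this fact along the algebra map $k[B_W]\to A\rtimes k[B_W]\to\mathcal{E}^{(W,S)}(A,\rho,a)$.)
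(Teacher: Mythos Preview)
Your proof is correct and follows the standard Matsumoto--Tits argument. The paper itself does not give a proof of this lemma: it simply labels it as ``well-known'' (attributing it to Matsumoto) and omits the argument entirely, so there is nothing to compare against beyond noting that your proposal spells out exactly the classical reasoning the paper is invoking.
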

\begin{lemma}\label{lemma:4-equations}
   The set $\{g_w\}_{w\in W}\subseteq \mathcal{E}^{(W,S)}(A,\rho,a)$ spans  $\mathcal{E}^{(W,S)}(A,\rho,a)$ as a left $A$-module.
\end{lemma}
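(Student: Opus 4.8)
The plan is to show that the left $A$-submodule $M:=\sum_{w\in W}Ag_w$ of $\mathcal{E}^{(W,S)}(A,\rho,a)$ equals the whole algebra. The ring $A\rtimes k[B_W]$ is spanned as a left $A$-module by (the images of) the elements of $B_W$, hence so is its quotient $\mathcal{E}^{(W,S)}(A,\rho,a)$; since $B_W$ is generated as a monoid by $\{g_s,g_s^{-1}\mid s\in S\}$ and $M$ contains $1=g_e$, it suffices to prove that $M$ is stable under right multiplication by each $g_s$ and each $g_s^{-1}$. Indeed, stability under right multiplication by these generators gives $M\beta\subseteq M$ for every $\beta\in B_W$, so the image of any $\beta$, being $1\cdot\beta$, lies in $M$; and since these images span $\mathcal{E}^{(W,S)}(A,\rho,a)$ over $A$ while $M$ is a left $A$-submodule, this forces $\mathcal{E}^{(W,S)}(A,\rho,a)=M$.

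Two preliminary observations reduce the task further. First, $M$ is also a right $A$-submodule, since the relation $g_wc=(w.c)g_w$ already holds in $A\rtimes k[B_W]$. Second, by Remark~\ref{rem:invertibility} one has $g_s^{-1}=a_s^{-1}g_s-a_s^{-1}(1-a_s)$ in $\mathcal{E}^{(W,S)}(A,\rho,a)$, so for $m\in M$ the element $mg_s^{-1}$ is a left-$A$-combination of $mg_s$ and $m$. Hence everything comes down to showing that $g_wg_s\in M$ for every $w\in W$ and every $s\in S$.

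This last assertion is the crux, and the only place where the quadratic relation \eqref{H1} intervenes. If $\ell(ws)>\ell(w)$, then appending $s$ to a reduced expression of $w$ gives a reduced expression of $ws$, so $g_wg_s=g_{ws}\in M$ by the Matsumoto lemma above. If instead $\ell(ws)<\ell(w)$, then $w$ has a reduced expression ending in $s$, so $g_w=g_{ws}g_s$, and therefore $g_wg_s=g_{ws}g_s^{2}=g_{ws}\bigl(a_s+(1-a_s)g_s\bigr)=\bigl((ws).a_s\bigr)g_{ws}+\bigl((ws).(1-a_s)\bigr)g_w$, using $g_{ws}g_s=g_w$ and commuting the coefficients past $g_{ws}$; both summands are in $M$. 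This settles $g_wg_s\in M$ in all cases and completes the argument. The only real obstacle is the bookkeeping in the length-decreasing case, where the Hecke relation must be invoked and the resulting $A$-coefficients moved to the left past $g_{ws}$; all remaining steps are formal manipulations inside the semidirect product.
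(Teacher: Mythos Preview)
Your proof is correct and follows essentially the same strategy as the paper: show that the left $A$-span $M$ of $\{g_w\}$ contains $1$ and is closed under right multiplication by $g_s$, via the length dichotomy and the quadratic relation in the decreasing case. You streamline things slightly by checking only right multiplication (the paper verifies that $M$ is a two-sided ideal, also treating left multiplication by $g_s$), which is a legitimate economy since the images of $B_W$ already span over $A$.

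One small slip in the exposition: the assertion that $mg_s^{-1}$ is a \emph{left}-$A$-combination of $mg_s$ and $m$ is not literally true, since $a_s^{-1}$ need not commute past an arbitrary $m\in M$. What you actually need---and already have---is that $M$ is a right $A$-submodule, so that $ma_s^{-1}\in M$, and then $mg_s^{-1}=(ma_s^{-1})g_s-ma_s^{-1}(1-a_s)\in M$ once $Mg_s\subseteq M$ is established. The logic is sound; only the phrasing should be corrected.
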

\begin{proof}
Let $M\subseteq \mathcal{E}^{(W,S)}(A,\rho,a)$ be the left $A$-submodule generated by the elements $\{g_w\}_{w\in W}$. Equivalently, $M$ is the $k$-submodule generated by the elements $\{cg_w\}_{c\in A, w\in W}$. We start by showing that $M$ is a 2-sided ideal of $\mathcal{E}^{(W,S)}(A,\rho,a)$. 
Since as a ring $\mathcal{E}^{(W,S)}(A,\rho,a)$ is generated by the elements $g_s$ of $k[B_W]$ and by the elements $x$ of $A$, we have to show that:
\begin{enumerate}
\item[(1)] $x(cg_w)\in M$;
\item[(2)]  $(cg_w)x\in M$;
\item[(3)]  $g_s(cg_w)\in M$;
\item[(4)]  $(cg_w)g_s\in M$;
\end{enumerate}
for any $s\in S$ and any $x\in A$. Relation (1)  is immediate:
$
x(cg_w)=(xc)g_w.
$\newline
For relation (2) we compute
\[
(cg_w)x=(c(g_wxg_{w}^{-1})) g_w=(c(w. x)) g_w,
\]
where we used that $B_W$ acts on $A$ via the projection $B_W\to W$, and that the image of $g_w$ via this projection is precisely $w$.
To prove (3), we distinguish two cases. We can have either $\ell(sw)=\ell(w)+1$, so that $sw$ is a reduced expression for $sw$, or $\ell(sw)=\ell(w)-1$ so that $w=sw'$ with $w'$ minimally written. Notice that in the second case we have $w'=sw$ in $W$. Now, in the first case we compute
\[
g_s(cg_w) = (g_s c g_s^{-1}) g_sg_w=  (g_s c g_s^{-1}) g_{sw}=(s. c)g_{sw},
\] 
while in the second case we compute
\begin{align*}
g_s(cg_w) &= (g_s c g_s^{-1}) g_sg_w = (g_s c g_s^{-1}) g_sg_s g_{w'}= (g_s c g_s^{-1}) g_s^2 g_{w'}\\
&= (g_s c g_s^{-1}) (a_s + (1 - a_s)g_s) g_{w'}\\
&= (s. c)a_sg_{w'}+ (s. c)(1 - a_s)g_w.
\end{align*}
Relation (4) is checked  similarly. 
Now the conclusion is immediate: since $g_1=1$, we have $1\in M$, and so $M=\mathcal{E}^{(W,S)}(A,\rho,a)$.
\end{proof}

For any $s\in S$ we have two $k$-linear operators on $A$: the multiplication by $a_s$ and the automorphism $\rho_s$.

\begin{lemma}
\label{lemma:free_cond}
    If $\mathcal{E}^{(W,S)}(A,\rho,a)$ is free as an $A$-module with basis $\{g_w\}_{w\in W}$, then,  for any $s\in S$ 
    \begin{equation}\label{eq:free-cond}
    (1 - a_s) (1-\rho_s)= 0 \text{ in }\mathrm{End}_k(A).
    \end{equation}
 \end{lemma}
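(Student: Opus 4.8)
The plan is to play the defining Hecke relation \eqref{H1} against the fact, recorded in Remark \ref{rem:commutes}, that $g_s^2$ commutes with $A$ inside $\mathcal{E}^{(W,S)}(A,\rho,a)$. These two facts give two expressions for $g_s^2 c$, and comparing them — after using the semidirect-product relation $g_s c=(s.c)g_s$ and the commutativity of $A$ — isolates the operator $(1-a_s)(1-\rho_s)$ as the coefficient of the basis vector $g_s$; the freeness hypothesis then lets me strip that basis vector off.

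In detail: fix $s\in S$ and $c\in A$. Substituting \eqref{H1} and then moving $c$ past $g_s$,
\[
g_s^2 c = a_s c + (1-a_s)g_s c = a_s c + (1-a_s)(s.c)\,g_s .
\]
On the other hand, by Remark \ref{rem:commutes} we have $g_s^2 c = c\,g_s^2 = c\,a_s + c(1-a_s)\,g_s$. Equating the two, and using that $A$ is commutative so that $a_s c = c a_s$ cancels, we are left with
\[
(1-a_s)(s.c)\,g_s = (1-a_s)\,c\,g_s \qquad\text{in }\mathcal{E}^{(W,S)}(A,\rho,a).
\]
Now I invoke the hypothesis. Since $g_s$ is the basis element $g_w$ for $w=s$, and $\mathcal{E}^{(W,S)}(A,\rho,a)$ is free over $A$ on the $g_w$, the assignment $x\mapsto x\,g_s$ is injective on $A$; equivalently, in an expansion in the basis $\{g_w\}$ the coefficient of $g_s$ is well defined. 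Hence the displayed identity forces $(1-a_s)(s.c) = (1-a_s)c$ in $A$, i.e. $(1-a_s)\bigl(\rho_s(c)-c\bigr)=0$. As $c\in A$ was arbitrary, this says exactly that the composition of multiplication by $(1-a_s)$ with $(1-\rho_s)$ is the zero endomorphism of $A$, which is \eqref{eq:free-cond}.

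There is no genuinely hard step: the computation is a three-line manipulation of the defining relations, and the only point requiring care is that freeness over $A$ is used in the mild but essential form ``equality of two $A$-multiples of the single basis vector $g_s$ implies equality of the coefficients''. It is worth noting what the argument does \emph{not} use: invertibility of $a_s$ plays no role (only the centrality of $g_s^2$, which already holds in $A\rtimes k[B_W]$), and no reduced-word combinatorics is needed, since we only ever manipulate $g_1$ and $g_s$.
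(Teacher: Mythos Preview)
Your proof is correct and follows essentially the same route as the paper's: both compute $g_s^2 c$ in two ways (via the Hecke relation and via the centrality of $g_s^2$ from Remark~\ref{rem:commutes}), use $g_s c=(s.c)g_s$ and the commutativity of $A$ to reduce to $(1-a_s)(s.c)g_s=(1-a_s)c\,g_s$, and then strip off $g_s$ using the freeness hypothesis. Your phrasing of the last step (``$x\mapsto x\,g_s$ is injective on $A$'') is in fact slightly more careful than the paper's, which says ``spanning set'' where it means ``basis''.
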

\begin{proof}
Let  $c\in A$ and $s\in S$. By Remark \ref{rem:commutes}, we have $g_s^2c =  c g_s^2$ and so
\[
     (a_s + (1 - a_s)g_s)c   =c(a_s + (1 - a_s)g_s).
\]
Since $A$ is commutative, this reduces to the identity
\[
     (1 - a_s)g_sc   =(1 - a_s)cg_s.
\]
Writing
$
g_sc=(g_scg_s^{-1})g_s=(s. c)g_s,
$
we find 
$
(1 - a_s)\rho_s(c)g_s=c(1 - a_s)g_s.
$
Since $\{g_w\}_{w\in W}$ is a spanning set  of $\mathcal{E}^{(W,S)}(A,\rho,a)$, this gives $(1 - a_s)(\rho_s(c)-c)=0$.
  
\end{proof}

When $A$ is a free $k$-module, condition \eqref{eq:free-cond} is also sufficient in order to have that $\mathcal{E}^{(W,S)}(A,\rho,a)$ is a free $A$-module over the  basis $\{g_w\}_{w\in W}$. The proof of sufficiency is however more involved and we need a few auxiliary arguments, that we verbatim adapt  from \cite{marin}. We start  by fixing a $k$-linear basis $\{c_i\}_{i\in I}$ for $A$,  and by noticing that the equations derived in the proof of Lemma \ref{lemma:4-equations} force  the effect of left and right multiplication by elements $x$ of $A$ and by elements $g_s$ with $s\in S$ on the elements $c_ig_w$ of $\mathcal{E}^{(W,S)}(A,\rho,a)$ as follows:
\[
x\cdot -\colon c_ig_w\mapsto xc_i g_w.
\]
\[
-\cdot x\colon c_ig_w\mapsto c_i(w. x) g_w.
\]
\[
g_s\cdot -\colon c_ig_w \mapsto \begin{cases}
(s. c_i)g_{sw} & \text{if $\ell(sw)=\ell(w)+1$}\\
\\
(s. c_i)a_sg_{sw}+ (s. c_i)(1 - a_s)g_w& \text{if $\ell(sw)=\ell(w)-1$}
\end{cases}
\]
\[
-\cdot g_s\colon c_ig_w \mapsto \begin{cases}
c_i g_{ws} & \text{if $\ell(sw)=\ell(w)+1$}\\
\\
c_i((ws). a_s)g_{ws}+c_i(1-(ws). a_s)g_w& \text{if $\ell(sw)=\ell(w)-1$}
\end{cases}
\]
Let now $\{\gamma_w\}$ be a collection of elements indexed by $w\in W$ and let $V$ be the free left $E$-module with basis $\{\gamma_w\}_{w\in W}$. It has a $k$-linear basis given by the elements  $\{c_i\gamma_w\}_{i\in I, w\in W}$.
We can then define $k$-linear endomorphisms of $V$ corresponding to the  left and right multiplication by elements $x$ and by elements $g_s$ on the elements of the form $c_ig_w$ of $\mathcal{E}^{(W,S)}$. That is we define $k$-linear operators $C^l_{x},C^r_{x}\colon V\to V$ by 
\begin{align*}
C^l_{x}&\colon c_i\gamma_w\mapsto xc_i \gamma_w,\\
C^r_{x}&\colon c_ig_w\mapsto c_i(w. x) \gamma_w,
\end{align*}
and 
$k$-linear operators $G^l_{s},G^r_{s}\colon V\to V$ by
\begin{align*}
G_s^l&\colon c_i\gamma_w \mapsto \begin{cases}
(s. c_i)\gamma_{sw} & \text{if $\ell(sw)=\ell(w)+1$}\\
\\
(s. c_i)a_s\gamma_{sw}+ (s. c_i)(1 - a_s)\gamma_w& \text{if $\ell(sw)=\ell(w)-1$}
\end{cases}
\\
G_s^r&\colon c_i\gamma_w \mapsto \begin{cases}
c_i \gamma_{ws} & \text{if $\ell(sw)=\ell(w)+1$}\\
\\
c_i((ws). a_s)\gamma_{ws}+c_i(1-(ws). a_s)\gamma_w& \text{if $\ell(sw)=\ell(w)-1$}
\end{cases}
\end{align*}
The key result is the following statement

\begin{proposition}\label{lemma:rho}
 If condition \eqref{eq:free-cond} is satisfied, then the map 
\begin{align*}
\mu(c)=C^{l}_{c},\quad  \mu(g_s)=G^l_s
 \end{align*}
  defines a $k$-algebra homomorphism $\mu\colon \mathcal{E}^{(W,S)}(A,\rho,a)\to \mathrm{End}_{k}(V)$.
\end{proposition}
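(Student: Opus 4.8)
The plan is to apply the criterion of Remark \ref{rem:presentation}: since $A$ is free over $k$, a $k$-algebra homomorphism $\mathcal{E}^{(W,S)}(A,\rho,a)\to\mathrm{End}_k(V)$ is the same as a $k$-algebra homomorphism $\mu_A\colon A\to\mathrm{End}_k(V)$ together with elements $r_s\in\mathrm{End}_k(V)$, one for each $s\in S$, such that $r_s\mu_A(c)=\mu_A(s.c)\,r_s$ for all $c\in A$, such that the $r_s$ satisfy the braid relations, and such that $r_s^2=\mu_A(a_s)+\mu_A(1-a_s)\,r_s$. I would take $\mu_A=C^l$ and $r_s=G^l_s$. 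That $C^l$ is a $k$-algebra homomorphism is immediate, since $V$ is by construction the free left $A$-module on $\{\gamma_w\}_{w\in W}$ and $C^l_c$ is multiplication by $c$. The commutation relation $G^l_sC^l_c=C^l_{s.c}G^l_s$ and the Hecke relation $(G^l_s)^2=C^l_{a_s}+C^l_{1-a_s}G^l_s$ are then checked by a direct computation on a basis vector $c_i\gamma_w$, distinguishing the two cases $\ell(sw)=\ell(w)\pm1$ and using that $\rho_s$ is a ring automorphism with $\rho_s^2=\mathrm{id}$ and that $s.a_s=a_s$. It is exactly in matching the coefficient of $\gamma_{sw}$ on the two sides of the Hecke relation that hypothesis \eqref{eq:free-cond} is used, through the identity $(1-a_s)(s.c_i)=(1-a_s)c_i$; as a byproduct one obtains that each $G^l_s$ is invertible, with inverse $C^l_{a_s^{-1}}G^l_s-C^l_{a_s^{-1}(1-a_s)}$ (cf. Remark \ref{rem:invertibility}).

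The only genuinely delicate point is the braid relations $\underbrace{G^l_{s_i}G^l_{s_j}\cdots}_{m_{i,j}}=\underbrace{G^l_{s_j}G^l_{s_i}\cdots}_{m_{i,j}}$, and for these I would run the classical left--right commutation argument. Let $\mathcal{L}\subseteq\mathrm{End}_k(V)$ be the subalgebra generated by all the $C^l_c$ and $G^l_s$, and let $\mathcal{R}$ be the subalgebra generated by all the $C^r_c$ and $G^r_s$. The crucial claim is that $\mathcal{L}$ and $\mathcal{R}$ commute elementwise; since each is generated by the operators just listed, this reduces to checking that $C^l_x$ and $G^l_s$ commute with $C^r_y$ and $G^r_t$. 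The commutations involving a $C$-operator are straightforward from the commutativity of $A$, so the real content is the identity $G^l_sG^r_t=G^r_tG^l_s$, and I expect this to be the main obstacle of the whole argument: it is a finite but laborious case analysis on the relative lengths of $w$, $sw$, $wt$ and $swt$ --- one expands $G^l_sG^r_t(c_i\gamma_w)$ and $G^r_tG^l_s(c_i\gamma_w)$ by the defining formulas, each expansion possibly producing a two-term sum on whose summands the other operator must then act according to the appropriate length case, and one reconciles the two outcomes using once more $s.a_s=a_s$ and $\rho_s^2=\mathrm{id}$ (and \eqref{eq:free-cond}, if it turns out to be needed).

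Granting the commutation of $\mathcal{L}$ and $\mathcal{R}$, the braid relations follow with no further effort. First, $V=\mathcal{R}\gamma_1$: for a reduced expression $w=s_1\cdots s_k$ one has $\gamma_w=G^r_{s_k}\cdots G^r_{s_1}\gamma_1$, because at each step the word lengthens (all prefixes of $s_1\cdots s_k$ are reduced), so only the length-increasing clause of the definition of $G^r$ is invoked, and then $c_i\gamma_w=C^r_{w^{-1}.c_i}\gamma_w$; hence every basis vector $c_i\gamma_w$ lies in $\mathcal{R}\gamma_1$. It follows that any operator in $\mathcal{L}$ is determined by its value on $\gamma_1$, for if $T\in\mathcal{L}$ kills $\gamma_1$ then $T(R\gamma_1)=R(T\gamma_1)=0$ for every $R\in\mathcal{R}$. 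Finally, writing $w'=\underbrace{s_is_j\cdots}_{m_{i,j}}$, one computes $\underbrace{G^l_{s_i}G^l_{s_j}\cdots}_{m_{i,j}}\gamma_1=\gamma_{w'}$, since the alternating word $s_is_j\cdots$ of length $m_{i,j}$ --- hence each of its tails --- is reduced in $W$, so every factor in the product acts by the length-increasing rule; symmetrically $\underbrace{G^l_{s_j}G^l_{s_i}\cdots}_{m_{i,j}}\gamma_1=\gamma_{w''}$ with $w''=\underbrace{s_js_i\cdots}_{m_{i,j}}$. Since $w'=w''$ in $W$, the two sides of the braid relation agree on $\gamma_1$, and therefore on all of $V$. (When $m_{i,j}=\infty$ there is no relation to verify.) Altogether this shows that $\mu$ is a well-defined $k$-algebra homomorphism.
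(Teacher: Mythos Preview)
Your proposal is correct and follows essentially the same route as the paper: the paper also verifies the commutation $G^l_sC^l_x=C^l_{s.x}G^l_s$ and the Hecke relation by direct case-by-case computation (using \eqref{eq:free-cond} in the latter), isolates the identity $G^l_{s_1}G^r_{s_2}=G^r_{s_2}G^l_{s_1}$ as the key auxiliary lemma (Lemma~\ref{lemma:l-and-r-commute}, proved by the exact length-based case analysis you describe), and then invokes Marin's left/right commutation argument for the braid relations.

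One small correction: the commutation $G^l_sC^r_y=C^r_yG^l_s$ is not quite ``straightforward from the commutativity of $A$''. In the length-decreasing case the $\gamma_w$-coefficients on the two sides are $(s.c)(1-a_s)(w.y)$ versus $(s.(c(w.y)))(1-a_s)=(s.c)((sw).y)(1-a_s)$, and matching these requires \eqref{eq:free-cond} applied to $w.y$. This does not affect your argument---and in fact you can bypass $C^r$ altogether: since the braid-relation operator $T$ is a difference of two words in the $G^l_s$ alone, and both words correspond to the same element $w'\in W$, the already-verified relation $G^l_sC^l_c=C^l_{s.c}G^l_s$ gives $TC^l_c=C^l_{w'.c}T$; hence $T(c\gamma_w)=C^l_{w'.c}T(\gamma_w)$, and $T(\gamma_w)=0$ follows from the $G^l$--$G^r$ commutation alone, since each $\gamma_w$ lies in the $G^r$-orbit of $\gamma_1$.
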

The proof follows closely that of Theorem 3.4 of \cite{marin}. We provide a few details, to emphasize the role of condition \eqref{eq:free-cond}.\\
First note that, by $k$-linearity,  
for any $c\in A$ we have
\begin{align*}
C^l_{x}\colon c\gamma_w\mapsto xc \gamma_w,\quad
C^r_{x}\colon cg_w\mapsto c(w. x) \gamma_w,
\end{align*}
\begin{align*}
G_s^l&\colon c\gamma_w \mapsto \begin{cases}
(s. c)\gamma_{sw} & \text{if $\ell(sw)=\ell(w)+1$}\\
\\
(s. c)a_s\gamma_{sw}+ (s. c)(1 - a_s)\gamma_w& \text{if $\ell(sw)=\ell(w)-1$}
\end{cases}
\\
G_s^r&\colon c\gamma_w \mapsto \begin{cases}
c \gamma_{ws} & \text{if $\ell(sw)=\ell(w)+1$}\\
\\
c((ws). a_s)\gamma_{ws}+c(1-(ws). a_s)\gamma_w& \text{if $\ell(sw)=\ell(w)-1$}
\end{cases}
\end{align*}

\begin{lemma}\label{lemma:l-and-r-commute}
    If condition \eqref{eq:free-cond} is satisfied, then  $G^l_{s_1} G^r_{s_2} = G^r_{s_2} G^l_{s_1}$ for any $s_1,s_2 \in S$.
\end{lemma}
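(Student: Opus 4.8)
The plan is to verify the commutation $G^l_{s_1}G^r_{s_2}=G^r_{s_2}G^l_{s_1}$ by evaluating both composites on an arbitrary basis vector $c\gamma_w$ and comparing. Since $G^l_{s_1}$ alters the length of $w$ by left multiplication with $s_1$ and $G^r_{s_2}$ by right multiplication with $s_2$, I would organize the computation around the four sign combinations of $\ell(s_1w)-\ell(w)$ and $\ell(ws_2)-\ell(w)$, i.e. the four cases according to whether $s_1$ is a left descent of $w$ and whether $s_2$ is a right descent of $w$. In each case one expands $G^l_{s_1}$ first, then applies $G^r_{s_2}$ to each resulting term, and separately expands in the other order. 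A small subtlety is that after applying $G^l_{s_1}$ one lands on vectors $\gamma_{s_1w}$ (and possibly $\gamma_w$), and to apply $G^r_{s_2}$ one needs to know whether $s_2$ is a right descent of $s_1w$; here I would use the standard fact that, except in degenerate situations, $\ell(s_1ws_2)$ relates to $\ell(w)$ in the expected additive way, and the genuinely delicate subcase is when $\ell(s_1w)=\ell(w)+1$ but $\ell(s_1ws_2)=\ell(w)$, equivalently $s_1w s_2 = w$, forcing $w^{-1}s_1 w = s_2$.

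The routine subcases (where no cancellation conspires, so lengths simply add) are immediate: both orders produce the single term governed by the obvious scalar, and the scalars agree because the $W$-action on $A$ is by algebra automorphisms and the relevant elements $s_1.c$, $(ws_2).a_{s_2}$, etc., are manipulated using only $s_1.(ws_2.x)=(s_1ws_2).x$ and commutativity of $A$. The heart of the matter is the mixed subcase $w^{-1}s_1w=s_2$ (so that $s_1w=ws_2$ as elements of $W$, and applying $s_1$ on the left or $s_2$ on the right to $w$ gives the same group element but with length one less in one direction and one more in the other). There, both composites produce two-term expressions supported on $\gamma_w$ and on $\gamma_{s_1w}=\gamma_{ws_2}$, and matching the coefficient of $\gamma_w$ is exactly where condition \eqref{eq:free-cond}, namely $(1-a_{s_1})(1-\rho_{s_1})=0$, must be invoked: the discrepancy between the two orders is a multiple of $(1-a_{s_1})(s_1.c - c)$ (equivalently, after transporting by $w$, a multiple of $(1-a_{s_2})(s_2.x-x)$, which vanishes by the equivariant version of \eqref{eq:free-cond} for $s_2$), and this is precisely $0$. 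I would also use the remark following the Marin datum definition, $s.a_s=a_s$, to simplify the scalars attached to $\gamma_{s_1w}$ so that they coincide on the nose.

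So the key steps, in order, are: (i) reduce to checking equality of $G^l_{s_1}G^r_{s_2}(c\gamma_w)$ and $G^r_{s_2}G^l_{s_1}(c\gamma_w)$ for all $c\in A$, $w\in W$; (ii) dispose of the three subcases in which the relevant lengths add without interference, using only that $\rho$ is an action by automorphisms and that $A$ is commutative; (iii) in the remaining subcase, characterized by $s_2=w^{-1}s_1w$, expand both orders into their two-term forms; (iv) match the $\gamma_{s_1w}$-coefficients using $s_1.a_{s_1}=a_{s_1}$, and match the $\gamma_w$-coefficients by rewriting their difference as $(1-a_{s_1})(s_1.c-c)$ applied after the appropriate transport, which is zero by \eqref{eq:free-cond}. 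The main obstacle is bookkeeping in step (iv): one must be careful that the scalar living in $A$ is acted on by the correct Weyl group element at each stage (the group element changes as one moves between $\gamma_w$ and $\gamma_{ws_2}$), and that the invocation of \eqref{eq:free-cond} is for the correct generator — it is cleanest to note that \eqref{eq:free-cond} holds for $s_1$ iff it holds for any $W$-conjugate of $s_1$ that happens to lie in $S$, which is the case here since $s_2=w^{-1}s_1w\in S$.
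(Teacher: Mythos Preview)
Your overall strategy---direct verification on $c\gamma_w$, organized by the signs of $\ell(s_1w)-\ell(w)$ and $\ell(ws_2)-\ell(w)$---is the same as the paper's. However, your case count is off and you have mislocated where condition \eqref{eq:free-cond} enters.

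The four sign combinations actually split into six cases, because in both the $(+,+)$ and the $(-,-)$ blocks one must further branch on $\ell(s_1ws_2)$. In $(+,+)$ either $\ell(s_1ws_2)=\ell(w)+2$ (generic) or $\ell(s_1ws_2)=\ell(w)$, forcing $s_1ws_2=w$; symmetrically, in $(-,-)$ either $\ell(s_1ws_2)=\ell(w)-2$ or $\ell(s_1ws_2)=\ell(w)$, again forcing $s_1w=ws_2$. You identify only the $(+,+)$ degenerate subcase and treat the entire $(-,-)$ block as one of the ``three subcases in which the relevant lengths add without interference''; but it is not.

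Moreover, the $(+,+)$ degenerate subcase you flag as the heart of the matter does \emph{not} require \eqref{eq:free-cond}. Both composites there equal $(s_1.c)\,a_{s_1}\gamma_w+(s_1.c)(1-a_{s_1})\gamma_{s_1w}$ on the nose, using only the $W$-equivariance of $a$ (namely $w.a_{s_2}=a_{ws_2w^{-1}}=a_{s_1}$). Condition \eqref{eq:free-cond} is instead needed precisely in the two $(-,-)$ subcases. In the degenerate one ($s_1w=ws_2$ with lengths decreasing) each side has three terms, and one must rewrite $(s_1.c)(1-a_{s_1})$ as $c(1-a_{s_1})$ to match. In the non-degenerate one ($\ell(s_1ws_2)=\ell(w)-2$) each side has four terms, and \eqref{eq:free-cond} must be applied repeatedly---both to replace $(s_1.c)(1-a_{s_1})$ by $c(1-a_{s_1})$ and to reduce $(s_1.(w.a_{s_2}))(1-a_{s_1})$ to $(w.a_{s_2})(1-a_{s_1})$. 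This last case is the longest computation in the proof and is entirely absent from your outline.
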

\begin{proof}
We have to check that $G^l_{s_1} G^r_{s_2}(c\gamma_w)=G^r_{s_2} G^l_{s_1}(c\gamma_w)$ for any $c\in A$ and any $w\in W$.
 There are several cases to be examined and  we will proceed case by case.
$\bullet$  If $l(s_1w)=l(w) + 1$, $l(ws_2) = l(w) + 1$ and $l(s_1ws_2) = l(w) + 2$, then we have:
\begin{align*}
            &G^l_{s_1} G^r_{s_2} (c\gamma_w) =  G^l_{s_1} (c\gamma_{ws_2}) = (s_1. c) \gamma_{s_1ws_2}
        \\
            &G^r_{s_2} G^l_{s_1} (c\gamma_w) = G^r_{s_2} ((s_1. c) \gamma_{s_1w}) = (s_1. c) \gamma_{s_1ws_2}.
    \end{align*}
        
        \item If $l(s_1w)=l(w) + 1$, $l(ws_2) = l(w) + 1$ and $l(s_1ws_2) = l(w)$, then by e.g. \cite[Lemma 3.2]{marin} we have  $s_1ws_2=w$, so that $ws_2w^{-1}=s_1$ and we have:
        \begin{align*}
                   G^l_{s_1} G^r_{s_2} (c\gamma_w) &=  G^l_{s_1} (c\gamma_{ws_2}) = (s_1. c)a_{s_1}\gamma_{s_1ws_2}+ (s_1. c)(1 - a_{s_1})\gamma_{ws_2}\\
                   & = (s_1. c)a_{s_1}\gamma_{w}+ (s_1. c)(1 - a_{s_1})\gamma_{s_1w}
                   \\
                               G^r_{s_2} G^l_{s_1} (c\gamma_w) &= G^r_{s_2} ((s_1. c) \gamma_{s_1w}) \\
                               &= 
                               (s_1. c)((s_1ws_2). a_{s_2})\gamma_{s_1ws_2}+(s_1. c)(1-(s_1ws_2). a_{s_2})\gamma_{s_1w} \\
                               &= 
                               (s_1. c)(w. a_{s_2})\gamma_{w}+(s_1. c)(1-w. a_{s_2})\gamma_{s_1w} \\
                                &= 
                               (s_1. c)( a_{w s_2w^{-1}})\gamma_{w}+(s_1. c)(1- a_{w s_2w^{-1}})\gamma_{s_1w} \\
                                 &= (s_1. c)a_{s_1}\gamma_{w}+(s_1. c)(1- a_{s_1})\gamma_{s_1w}.
        \end{align*}
        
        
        
\noindent        $\bullet$ If $l(s_1w)=l(w) - 1$, $l(ws_2) = l(w) - 1$ and $l(s_1ws_2) = l(w)$ we have that in this case $s_1w=ws_2$. This gives $s_1=ws_2w^{-1}$ and so $w. a_{s_2}=a_{s_1}$. Recalling that by condition \eqref{eq:free-cond}, we have $(1-a_s)(s. c)=(1-a_s)c$, and that $s. a_s=a_s$, we get
     
        \begin{align*}
            &G^l_{s_1} G^r_{s_2} (c\gamma_w)=  G^l_{s_1} (c((ws_2). a_{s_2})\gamma_{ws_2}+c(1-(ws_2). a_{s_2})\gamma_w) \\
            &= (s_1. (c((ws_2). a_{s_2}))\gamma_{s_1ws_2}+(s_1. (c(1-(ws_2). a_{s_2})))a_{s_1}\gamma_{s_1w}\\
            &\qquad\qquad\qquad + (s_1. (c(1-(ws_2). a_{s_2})))(1 - a_{s_1})\gamma_w\\
            &= (s_1. c)((s_1ws_2). a_{s_2})\gamma_{s_1ws_2}+(s_1. c)(1-(s_1ws_2). a_{s_2})a_{s_1}\gamma_{s_1w}\\
            &\qquad\qquad\qquad + (s_1. c)(1-(s_1ws_2). a_{s_2})(1 - a_{s_1})\gamma_w\\
            &= (s_1. c)(w. a_{s_2})\gamma_{w}+(s_1. c)(1-w. a_{s_2})a_{s_1}\gamma_{s_1w}\\
            &\qquad\qquad\qquad + (s_1. c)(1-w. a_{s_2})(1 - a_{s_1})\gamma_w\\
            &= (s_1. c)a_{s_1}\gamma_{w}+(s_1. c)(1-a_{s_1})a_{s_1}\gamma_{s_1w}+ (s_1. c)(1-a_{s_1})(1 - a_{s_1})\gamma_w\\
            &= (s_1. c)a_{s_1}\gamma_{w}+c(1-a_{s_1})a_{s_1}\gamma_{s_1w}+ c(1-a_{s_1})^2\gamma_w,\\ \\
&G^r_{s_2} G^l_{s_1} (c\gamma_w)= G^r_{s_2} ((s_1. c)a_{s_1}\gamma_{s_1w}+ (s_1. c)(1 - a_{s_1})\gamma_w) =\\
                                 &= G^r_{s_2} ((s_1. c)a_{s_1}\gamma_{s_1w}+ c(1 - a_{s_1})\gamma_w) \\
                                 &=(s_1. c)a_{s_1}\gamma_{s_1ws_2}+ c(1 - a_{s_1})(((ws_2). a_{s_2})\gamma_{ws_2}+(1-(ws_2). a_{s_2})\gamma_w)\\
                                 &=(s_1. c)a_{s_1}\gamma_{w}+ c(1 - a_{s_1})((w. a_{s_2})\gamma_{ws_2}+(1-w. a_{s_2})\gamma_w)\\
                                 &=(s_1. c)a_{s_1}\gamma_{w}+ c(1 - a_{s_1})(a_{s_1}\gamma_{s_1w}+(1-a_{s_1})\gamma_w)\\
                                 &=(s_1. c)a_{s_1}\gamma_{w}+ c(1 - a_{s_1})a_{s_1}\gamma_{s_1w}+c(1 - a_{s_1})^2\gamma_w.
           \end{align*}                 
  The  cases   $l(s_1w)=l(w) + 1$, $l(ws_2) = l(w) - 1$ and $l(s_1ws_2) = l(w)$,     $l(s_1w)=l(w) - 1$, $l(ws_2) = l(w) + 1$ and $l(s_1ws_2) = l(w)$  ,   $l(s_1w)=l(w) - 1$, $l(ws_2) = l(w) - 1$ and $l(s_1ws_2) = l(w) - 2$      are treated along the same lines.

\end{proof}

\begin{proof}[Proof of Proposition \ref{lemma:rho}]
Let $\mu_A\colon A\to \mathrm{End}_{k}(V)$ the map defined by $\mu_A(c)=C^{l}_{c}$ for any $c\in A$. By Remark \ref{rem:presentation}, showing that $\mu$ defines a $k$-algebra homomorphism is equivalent to showing that $\mu_A$ is a $k$-algebra homomorphism, i.e., since $\mu_A$ is manifestly $k$-linear, that
$
C^l_{x}C^l_{y}=C^l_{xy}, \ x,y\in A,
$
and that the following identities hold:
\begin{align} 
G^l_s C^l_x &=C^l_{s. x}G^l_s,\label{uno}\\
({G^l_s})^2 &= C^l_{a_s} + C^l_{(1 - a_s)}G^l_s,\label{due}\\
\underbrace{G^l_{s_i}G^l_{s_j}G^l_{s_i} \cdots}_\text{$m_{i,j}$ times}&=\underbrace{G^l_{s_j}G^l_{s_i}G^l_{s_j} \cdots}_\text{$m_{i,j}$ times}.\label{tre}
\end{align}
For \eqref{uno} we compute
\begin{align*}
(C^l_{x}C^l_{y})(c\gamma_w)&=C^l_{x}(C^l_{y}(c\gamma_w))=C^l_{x}((yc)\gamma_w))=xyc\gamma_w=C^l_{xy}(c\gamma_w).
\end{align*}
For \eqref{due}, we find
 \begin{align*}
(G^l_{s}C^l_{x})(c\gamma_w)
&=
\begin{cases}
(s. (xc))\gamma_{sw} & \text{if $\ell(sw)=\ell(w)+1$,}\\
\\
(s. (xc))a_s\gamma_{sw}+ (s. (xc))(1 - a_s)\gamma_w& \text{if $\ell(sw)=\ell(w)-1$,}
\end{cases}
\end{align*}
and
 \begin{align*}
(C^l_{s. c}G^l_s)(c\gamma_w)
&=
\begin{cases}
C^l_{s. c}((s. c)\gamma_{sw}) & \text{if $\ell(sw)=\ell(w)+1$}\\
\\
C^l_{s. c}((s. c)a_s\gamma_{sw}+ (s. c)(1 - a_s)\gamma_w)& \text{if $\ell(sw)=\ell(w)-1$}
\end{cases}
\\
&=
\begin{cases}
(s. (xc))\gamma_{sw} & \text{if $\ell(sw)=\ell(w)+1$}\\
\\
(s. (xc)a_s\gamma_{sw}+(s. (xc))(1 - a_s)\gamma_w& \text{if $\ell(sw)=\ell(w)-1$}
\end{cases}
\end{align*}
For \eqref{tre} we compute 
 \begin{align*}
(G^l_{s})^2(c\gamma_w)&=
\begin{cases}
G^l_{s}((s. c)\gamma_{sw}) \quad \text{if $\ell(sw)=\ell(w)+1$}\\
\\
G^l_{s}((s. c)a_s\gamma_{sw}+ (s. c)(1 - a_s)\gamma_w)& \text{if $\ell(sw)=\ell(w)-1$}
\end{cases}\\
&=
\begin{cases}
((s. ((s. c)a_s))\gamma_{w}+ (s. (s. c))(1 - a_s)\gamma_{sw} & \text{if $\ell(sw)=\ell(w)+1$}\\
\\
(s. ((s. c)a_s))\gamma_{w}+ ((s. (s. c))a_s\gamma_{sw}+ (s.(s. c))(1 - a_s)\gamma_w \\
 - ((s. ((s. c)a_s))a_s\gamma_{sw}+ (s. ((s. c)a_s))(1 - a_s)\gamma_w)& \text{if $\ell(sw)=\ell(w)-1$}
\end{cases}
\\
&=
\begin{cases}
a_sc\gamma_{w}+ (1 - a_s)c\gamma_{sw} & \text{if $\ell(sw)=\ell(w)+1$}\\
\\
ca_s\gamma_{w}+ c a_s\gamma_{sw}+ c(1 - a_s)\gamma_w  - c(a_s)^2\gamma_{sw}- ca_s(1 - a_s)\gamma_w & \text{if $\ell(sw)=\ell(w)-1$}
\end{cases}
\\
&=
\begin{cases}
a_sc\gamma_{w}+ (1 - a_s)c\gamma_{sw} & \text{if $\ell(sw)=\ell(w)+1$}\\
\\
 a_s(1-a_s)c\gamma_{sw} + (1-a_s(1 - a_s))c\gamma_w & \text{if $\ell(sw)=\ell(w)-1$}.
\end{cases}
\end{align*}
By condition \eqref{eq:free-cond}, we have $(1-a_s)(s. c)=(1-a_s)c$, so we find
\begin{align*}
&\phantom{xx}(C^l_{a_s} + C^l_{(1 - a_s)}G^l_s)(c\gamma_w)=\\&=C^l_{a_s}(c\gamma_w)+C^l_{(1 - a_s)}(G^l_s(c\gamma_w))\\
&=
\begin{cases}
C^l_{a_s}(c\gamma_w)+C^l_{(1 - a_s)}((s. c)\gamma_{sw})   \text{if $\ell(sw)=\ell(w)+1$}\\
\\
C^l_{a_s}(c\gamma_w)+C^l_{(1 - a_s)}((s. c)a_s\gamma_{sw}+ (s. c)(1 - a_s)\gamma_w)& \text{if $\ell(sw)=\ell(w)-1$}
\end{cases}\\
&=
\begin{cases}
a_sc\gamma_w+(1 - a_s)(s. c)\gamma_{sw} & \text{if $\ell(sw)=\ell(w)+1$}\\
\\
a_sc\gamma_w+(1 - a_s)(s. c)a_s\gamma_{sw}+ (1 - a_s)(s. c)(1 - a_s)\gamma_w& \text{if $\ell(sw)=\ell(w)-1$}
\end{cases}\\
&=
\begin{cases}
a_sc\gamma_w+(1 - a_s)c\gamma_{sw} & \text{if $\ell(sw)=\ell(w)+1$}\\
\\
a_s(1 - a_s)c\gamma_{sw}+ (1 - a_s(1-a_s))c\gamma_w& \text{if $\ell(sw)=\ell(w)-1$}
\end{cases}
\end{align*}
Finally, we need to show that the $G^l_s$ satisfy the braid relations. Here we can proceed exactly  as in \cite{marin}.
\end{proof}
\begin{remark}\label{rem:left-is-left}
Notice that the left $A$-action on $V$ induced by $\mu$ via the inclusion $A\hookrightarrow \mathcal{E}^{(W,S)}(A,\rho,a)$ coincides with the left $A$-action on $V$ given by its $A$-module structure. 
\end{remark}

\begin{corollary}\label{cor:free}
Assume $A$ is a free $k$-module. Then $\mathcal{E}^{(W,S)}(A,\rho,a)$ is the free $A$-module over the  basis $\{g_w\}_{w\in W}$ if and only if
 condition \eqref{eq:free-cond} is satisfied.
\end{corollary}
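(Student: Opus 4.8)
The plan is to deduce the corollary from the results already established, the real content being packaged in Proposition~\ref{lemma:rho}. The ``only if'' implication is exactly Lemma~\ref{lemma:free_cond}, so I would concentrate on the converse. Assume condition~\eqref{eq:free-cond}. By Lemma~\ref{lemma:4-equations} the set $\{g_w\}_{w\in W}$ already spans $\mathcal{E}^{(W,S)}(A,\rho,a)$ as a left $A$-module, so the task reduces to proving that this set is left $A$-linearly independent.

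For independence I would use the $k$-algebra homomorphism $\mu\colon \mathcal{E}^{(W,S)}(A,\rho,a)\to \mathrm{End}_k(V)$ provided by Proposition~\ref{lemma:rho}, where $V$ is the free left $A$-module on $\{\gamma_w\}_{w\in W}$ and $\mu(c)=C^l_c$, $\mu(g_s)=G^l_s$. The only computation needed is that for every $w\in W$ and every $c\in A$ one has $\mu(g_w)(c\gamma_1)=(w.c)\gamma_w$. Indeed, fix a reduced expression $w=s_1\cdots s_k$, so that $\mu(g_w)=G^l_{s_1}\circ\cdots\circ G^l_{s_k}$ (well defined since $\mu$ is an algebra map and $g_w$ is well defined by Matsumoto's lemma). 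Applying the operators from the rightmost inward, at the $j$-th step one acts by $G^l_{s_j}$ on an element supported on $\gamma_v$ with $v=s_{j+1}\cdots s_k$; since $s_j v$ is a contiguous subword of a reduced word it is reduced, so $\ell(s_j v)=\ell(v)+1$ and $G^l_{s_j}$ is applied in its first branch, $c'\gamma_v\mapsto (s_j.c')\gamma_{s_j v}$. An immediate induction, starting from $G^l_{s_k}(c\gamma_1)=(s_k.c)\gamma_{s_k}$, then yields $\mu(g_w)(c\gamma_1)=(w.c)\gamma_w$; in particular $\mu(g_w)(\gamma_1)=\gamma_w$.

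Granting this, suppose $\sum_{w\in W}c_w g_w=0$ in $\mathcal{E}^{(W,S)}(A,\rho,a)$ with $c_w\in A$ (all but finitely many zero). Applying $\mu$ and evaluating at $\gamma_1$ gives
\[
0=\sum_{w\in W}\mu(c_w)\,\mu(g_w)(\gamma_1)=\sum_{w\in W}C^l_{c_w}(\gamma_w)=\sum_{w\in W}c_w\gamma_w\quad\text{in }V,
\]
the last equality being Remark~\ref{rem:left-is-left}. Since $V$ is free as a left $A$-module on $\{\gamma_w\}_{w\in W}$, we conclude $c_w=0$ for all $w$. Hence $\{g_w\}_{w\in W}$ is $A$-linearly independent, and together with the spanning statement of Lemma~\ref{lemma:4-equations} it is an $A$-basis, which proves the ``if'' direction.

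I do not expect a serious obstacle in the corollary itself; the substantive work — checking that the operators $G^l_s$ satisfy the Hecke relation~\eqref{H1} and the braid relations, which is precisely where condition~\eqref{eq:free-cond} is used (Lemma~\ref{lemma:l-and-r-commute} and the verifications of \eqref{due} and \eqref{tre}) — has already been done in Proposition~\ref{lemma:rho}. The one point to keep in mind is that the hypothesis ``$A$ free over $k$'' is exactly what makes $\mu$ and $V$ available, since their construction rests on a choice of $k$-basis of $A$; beyond that the argument is the short evaluation-at-$\gamma_1$ trick above.
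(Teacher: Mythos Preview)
Your proposal is correct and follows essentially the same route as the paper: the ``only if'' direction is Lemma~\ref{lemma:free_cond}, and for the ``if'' direction both you and the paper use the representation $\mu$ of Proposition~\ref{lemma:rho} together with the key computation $\mu(g_w)(\gamma_1)=\gamma_w$ obtained by walking through a reduced expression. The only cosmetic difference is that the paper packages the argument as a pair of mutually inverse $A$-module maps $\lambda\colon x\mapsto x._\mu\gamma_1$ and $\eta\colon \gamma_w\mapsto g_w$, whereas you separate spanning (Lemma~\ref{lemma:4-equations}) from the linear-independence check via evaluation at $\gamma_1$; these amount to the same thing.
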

\begin{proof}
The ``only if'' part is Lemma \ref{lemma:free_cond}. To prove the ``if'' part, 
consider the free $A$-module $V$ on the basis $\{\gamma_{w}\}_{w\in W}$ endowed with the left $\mathcal{E}^{(W,S)}(A,\rho,a)$-module structure on the $k$-module $V$ defined by the algebra homomorphism $$\mu\colon \mathcal{E}^{(W,S)}(A,\rho,a)\to \mathrm{End}_k(V)$$ from Lemma \ref{lemma:rho}. Then we have a $k$-linear map
$
\lambda \colon \mathcal{E}^{(W,S)}(A,\rho,a) \to V,\,\lambda(x)= x._\mu \gamma_{1}.
$
By Remark \ref{rem:left-is-left}, the map $\lambda$ is a map of left $A$-modules. Since $V$ is free as an $A$-module, we can define an $A$-module map $\eta: V\to \mathcal{E}^{(W,S)}(A,\rho,a)$ by defining it on the $A$-basis $\{\gamma_w\}_{w\in W}$. We set $\eta(\gamma_w)=g_w.$ 
The morphisms of $A$-modules $\lambda$ and $\eta$ are inverse each other. To see this, it suffices to check $\lambda\circ\eta=\mathrm{id}_V$ and $\eta\circ\lambda=\mathrm{id}_{\mathcal{E}^{(W,S)}(A,\rho,a)}$ on $A$-generating sets for $V$ and $\mathcal{E}^{(W,S)}(A,\rho,a)$. For $V$ we can take as generating set the $A$-basis $\{\gamma_w\}_{w\in W}$. We find
\[
(\lambda\circ\eta)(\gamma_w)=\lambda(g_w)=g_w._\mu \gamma_{1}.
\]
If $w=s_1\cdots s_k$ a reduced expression for $w$, then $g_w=g_{s_1}\cdots g_{s_k}$ and so
\begin{align}\label{eq:gwgamma0}
g_w._\mu \gamma_{1}&=g_{s_1}._\mu(g_{s_1}._\mu\cdots ._\mu(g_{s_k}._\mu g_1)\cdots)\\
\notag &=G^l_{s_1}\cdots G^l_{s_k}(\gamma_1)=G^l_{s_1}\cdots G^l_{s_{k-1}}(\gamma_{s_k})=\dots\\
\notag &=\gamma_{s_1\cdots s_k}=\gamma_w.
\end{align}
Therefore
$
(\lambda\circ\eta)(\gamma_w)=\gamma_w, 
$
for any $w\in W$.
By Lemma \ref{lemma:4-equations}, the set $\{g_w\}){w\in W}$ is a generating set for $\mathcal{E}^{(W,S)}(A,\rho,a)$ as an $A$-module. We find, by using \eqref{eq:gwgamma0} again,
\[
(\eta\circ\lambda)(g_w)=\eta(\lambda(g_w))=\eta(g_w._\mu \gamma_1)=\eta(\gamma_w)=g_w,
\]
for any $w\in W$.
\end{proof}

\begin{corollary}
If $A$ is free of finite rank over $k$ and condition \eqref{eq:free-cond} is satisfied, then $\mathcal{E}^{(W,S)}(A,\rho,a)$ is free of finite rank over $k$ and
\begin{equation}\label{rk}
\mathrm{rk}_k \mathcal{E}^{(W,S)}(A,\rho,a)= |W|\, \mathrm{rk}_{k}A.
\end{equation}
A $k$-linear basis of $\mathcal{E}^{(W,S)}(A,\rho,a)$ is $\{c_i g_w\}_{w\in W}$ and $\{c_i\}_{i\in I}$ a $k$-basis of $A$. 
\end{corollary}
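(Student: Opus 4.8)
The plan is to deduce this directly from Corollary \ref{cor:free} together with the transitivity of freeness for modules. First I would invoke Corollary \ref{cor:free}: since $A$ is in particular free as a $k$-module and condition \eqref{eq:free-cond} holds, $\mathcal{E}^{(W,S)}(A,\rho,a)$ is a free left $A$-module with basis $\{g_w\}_{w\in W}$. Concretely, this gives a direct sum decomposition $\mathcal{E}^{(W,S)}(A,\rho,a)=\bigoplus_{w\in W} A g_w$ of left $A$-modules, hence a fortiori of $k$-modules.

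Next I would use that each summand $A g_w$ is isomorphic to $A$ as a $k$-module via the map $c\mapsto c g_w$ (this is injective because $\{g_w\}_{w\in W}$ is an $A$-basis, so in particular $A g_w$ is a free rank-one $A$-module). Hence a fixed $k$-basis $\{c_i\}_{i\in I}$ of $A$ transports to a $k$-basis $\{c_i g_w\}_{i\in I}$ of $A g_w$. Assembling over $w\in W$, the family $\{c_i g_w\}_{i\in I,\, w\in W}$ is a $k$-basis of $\mathcal{E}^{(W,S)}(A,\rho,a)$. When $W$ is finite and $\mathrm{rk}_k A=|I|<\infty$, this basis is finite of cardinality $|W|\cdot|I|$, which is precisely \eqref{rk}.

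There is essentially no obstacle here; the only point requiring care is the finiteness hypothesis on $W$, which is implicit in the phrase ``free of finite rank'' and in the right-hand side of \eqref{rk}. If $W$ is allowed to be infinite, the same argument shows that $\mathcal{E}^{(W,S)}(A,\rho,a)$ is still free over $k$, now on the (infinite) basis $\{c_i g_w\}$, with $\mathrm{rk}_k \mathcal{E}^{(W,S)}(A,\rho,a)=|W|\cdot\mathrm{rk}_k A$ read as a product of cardinals.
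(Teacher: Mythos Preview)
Your argument is correct and is exactly the intended deduction: the paper states this corollary without proof, treating it as an immediate consequence of Corollary \ref{cor:free} via the transitivity of freeness, which is precisely what you spell out. Your observation about the implicit finiteness of $W$ is also on point.
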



\begin{remark}
In the same assumptions as in Lemma \ref{lemma:epsilon}, we have
\[
\epsilon\circ((1 - a_s) (1-\rho_s))=((1 - b_s) (1-\eta_s))\circ \epsilon\colon A \to B
\]
In particular, if $\epsilon$ is surjective and $(1 - a_s) (1-\rho_s)=0$ then also $(1 - b_s) (1-\eta_s)$, so that if $\epsilon$ is surjective and $\mathcal{E}^{(W,S)}(A,\rho,a)$ is free as an  $A$-module, then $\mathcal{E}^{(W,S)}(B,\eta,b)$ is free as a $B$-module.
\end{remark}
\begin{remark}
One immediately sees form Corollary \ref{cor:free} that if the $W$-action $\rho$ on $A$ is trivial, then $\mathcal{E}^{(W,S)}(A,a)$ is a free $A$-module with basis $\{g_w | w\in W\}$. In particular, one recovers this way the well known result that  $H_R{(W,S)}(u_s)$ is a free $R$-module with basis $\{g_w | w\in W\}$.
\end{remark}
\begin{remark}\label{rem:mih}
In the same notation as in Example \ref{example:monoid}, one has
\[
(1 - \hat{\epsilon}_s) (1-\rho_s)= (1-u_s)e_s (1-\rho_s).  
\]
In particular, if 
\begin{equation}\label{eq:mih}
e_s (1-\rho_s)=0 \text{ for all }s\in S
\end{equation}
one has that $\mathcal{E}^{(W,S)}(R[E],\rho,\hat{e})$ is a free $R[E]$-module with basis $\{g_w | w\in W\}$.
\end{remark}
Example \ref{example:monoid} and Remark \ref{rem:mih} motivate the following
\begin{definition}
A {\it Marin-Iwahori-Hecke monoid} is a triple $(E,\rho,e)$ consisting of a commutative monoid $E$, a $W$-action $\rho$ on the monoid $E$, and a $W$-equivariant map $
e\colon S\to \mathrm{Idempotents}(E)$ satisfying equation \eqref{eq:mih}. 
\end{definition}
\begin{example}[Marin's algebras]\label{ex:marin-roots}
Let $\Sigma(\Phi)$ be the monoid of root subsystems of $\Phi$, with the composition law is given by $\Psi_1\cdot \Psi_2=\langle \Psi_1,\Psi_2\rangle$, where $\langle X\rangle$ denotes the root subsystem generated by $X$. The $W$-action  on $\Phi$ induces a $W$-action on $\Sigma(\Phi)$. Let us denote this action by $\rho$. All elements of $\Sigma(\Phi)$ are idempotent, and the map 
\begin{equation}\label{e}
e\colon S \to \Sigma(\Phi),\quad 
e(s)=\langle s\rangle
\end{equation}
is manifestly $W$-equivariant. Let $R$ be a commutative $k$-algebra with trivial $W$-action, and with a $W$-equivariant map $u\colon S\to R^\times$. Since $R[\Sigma(\Phi)]$ is generated by the root subsystems $\langle \alpha\rangle$ with $\alpha\in \Phi$, equation \eqref{eq:mih} is equivalent to
\[
(e_s (1-\rho_s))(\langle \alpha\rangle)=0 \text{ for all }s\in S \text{ and all }\alpha\in \Phi.
\]
This is the equation $
\langle s\rangle\cdot \langle \alpha\rangle = \langle s\rangle\cdot \langle s. \alpha\rangle,
$
where $.$ is the $W$-action on $\Phi$, i.e., the equation
$
\langle s,\alpha\rangle = \langle s, s. \alpha\rangle.
$
That this relation is satisfied is a general fact in the theory of Coxeter groups. Hence $(\Sigma(\Phi),\rho,e)$ is a Marin-Iwahori-Hecke monoid. It follows that $\mathcal{E}^{(W,S)}(R[\Sigma(\Phi)],\rho,\hat{e})$ is a free $R[\Sigma(\Phi)]$-module with basis $\{g_w | w\in W\}$. Moreover, the augmentation map $\epsilon\colon R[\Sigma(\Phi)]\to R$ induces a surjective algebra homomorphism \begin{equation}\label{homo}
\mathcal{E}^{(W,S)}(R[\Sigma(\Phi)],\rho,\hat{e})\to H_R{(W,S)}(u_s).
\end{equation}
\end{example}
\noindent The algebra $\mathcal{E}^{(W,S)}(R[\Sigma(\Phi)],\rho,\hat{e})$ is precisely denoted by $\mathcal{C}_W$ in \cite{marin} mentioned in the Introduction.
\begin{lemma}\label{lemma:generators}
Let $\{x_i\}$ be a set of generators for the commutative monoid $E$. Then condition $\eqref{eq:mih}$ is equivalent to
\begin{equation}\label{eq:mih-generators}
e_s\cdot x_i=e_s\cdot (s. x_i)\quad\text{ for any } s\in S \text{ and any generator }x_i. 
\end{equation}
\end{lemma}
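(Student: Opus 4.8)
The plan is to translate condition \eqref{eq:mih} into a statement purely about elements of the monoid $E$, dispose of the easy implication by inspection, and then prove the converse by a short induction on the length of a word in the generators $\{x_i\}$.

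First I would record that \eqref{eq:mih}, read in $\mathrm{End}_k(R[E])$ as in Remark \ref{rem:mih}, is equivalent to the elementwise assertion that $e_s\cdot y = e_s\cdot(s.y)$ in $E$ for every $s\in S$ and every $y\in E$: indeed the operators $y\mapsto e_s\cdot y$ and $\rho_s$ on $R[E]$ are $R$-linear, so their difference vanishes on all of $R[E]$ if and only if it vanishes on the $R$-basis $E$. With this reformulation in hand, the implication \eqref{eq:mih}$\Rightarrow$\eqref{eq:mih-generators} is immediate, since each generator $x_i$ is in particular an element of $E$.

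For the converse, I would assume \eqref{eq:mih-generators}, take an arbitrary $y\in E$, write $y = x_{i_1}\cdot x_{i_2}\cdots x_{i_n}$ (the empty product when $y=1$), and argue by induction on $n$. The base case $n=0$ is $e_s\cdot 1 = e_s = e_s\cdot(s.1)$, using $s.1=1$. For the inductive step, write $y = x_{i_1}\cdot y'$ with $y'$ a product of $n-1$ generators; using idempotency of $e_s$ and commutativity of $E$ one has $e_s\cdot y = (e_s\cdot x_{i_1})\cdot(e_s\cdot y')$, then \eqref{eq:mih-generators} applied to $x_{i_1}$ together with the inductive hypothesis applied to $y'$ turns this into $(e_s\cdot(s.x_{i_1}))\cdot(e_s\cdot(s.y'))$, and collapsing the two copies of $e_s$ again and using that $\rho_s$ is a monoid endomorphism gives $e_s\cdot(s.x_{i_1})\cdot(s.y') = e_s\cdot(s.(x_{i_1}\cdot y')) = e_s\cdot(s.y)$, as required.

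The argument is elementary and I do not anticipate a genuine obstacle; the only points requiring a little care are the reduction of the operator identity \eqref{eq:mih} to an identity of elements of $E$, and the bookkeeping with the idempotent $e_s$ — one must split off a factor of $e_s$, apply the hypotheses, and then reabsorb it, so that the induction closes without leaving spurious factors behind.
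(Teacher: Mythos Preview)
Your proposal is correct and follows essentially the same strategy as the paper: reduce the operator identity \eqref{eq:mih} to the elementwise statement $e_s\cdot x=e_s\cdot(s.x)$ for all $x\in E$, and then show this is preserved under products. The only minor difference is that your inductive step duplicates $e_s$ via idempotency and then reabsorbs it, whereas the paper's computation moves a single copy of $e_s$ across the product using only commutativity and associativity (so the paper's argument would in fact go through even without $e_s$ being idempotent).
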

\begin{proof}
Equation $\eqref{eq:mih}$ is equivalent to 
\begin{equation}\label{eq:fot-internal-reference}
e_s\cdot x=e_s\cdot (s. x)
\end{equation}
for any $s\in S$ and any $x\in E$. So we only need to show that if $x$ and $y$ satisfy equation \eqref{eq:fot-internal-reference}, then also $x\cdot y$ satisfies it. We have
\begin{align*}
e_s\cdot (s. (x\cdot y))&=e_s\cdot ((s. x)\cdot(s.  y))=(e_s\cdot (s. x))\cdot(s.  y)=(e_s\cdot  x)\cdot(s.  y)\\
&=(x\cdot e_s)\cdot(s.  y)=x\cdot (e_s\cdot(s.  y))=x\cdot (e_s\cdot y)\\
&=e_s\cdot(x\cdot y).
\end{align*}
Notice how the commutativity of $E$ played an essential role in the proof.
\end{proof}
\begin{remark}
Condition \eqref{eq:mih}, which amounts to  $e_s\cdot x=e_s\cdot (s. x)$ for any $s\in S$ and any $x\in E$, can be quite restrictive. One can relax it by looking at the $W$-invariant submonoid $E_{[e]}$ generated by the image of $e\colon S\to \mathrm{Idempotents}(E)$, i.e., the submonoid generated by the elements $w. e_s$ with $w\in W$ and $s\in S$.  The $W$-action on $E$ restricts to a $W$-action on $E_{[e]}$, and $e$ itself can be seen as a $W$-equivariant map $e\colon S\to \mathrm{Idempotents}(E_{[e]})$. Then, by Lemma \ref{lemma:generators}, one sees that $(E_{[e]},\rho,e)$ is a Marin-Iwahori-Hecke monoid precisely when  
\begin{equation}\label{eq:mih-generators-e}
e_{s_1}\cdot (w. e_{s_2})=e_{s_1}\cdot ((s_1w). e_{s_2})\quad\text{ for any } s_1,s_2\in S, \, w\in W. 
\end{equation}
\end{remark}\vskip5pt
The above remark suggests an immediate generalization of Example \ref{ex:marin-roots}, where on considers two Coxeter systems instead of a single one. We begin with the following.
\begin{definition}\label{def:j-map}
Let $(W,S)$ and $(U,T)$ be Coxeter systems.
    A pair $(\phi,e)$ where $\phi : W \rightarrow U$ is an homomorphisms of groups (not necessarily a morphism of Coxeter systems) and $e$ is a map of sets 
  $e\colon  S \rightarrow \Sigma(\Phi_U),
    $
    is called a {\it Juyumaya map} if  $e$ is $W$-equivariant  (via $\phi$) and 
\begin{equation}\label{JM}\langle e_{s_1},w. e_{s_2}\rangle  = \langle e_{s_1},(s_1w). e_{ s_2}\rangle \text{ for any $s_1,s_2\in S, w\in W$},\end{equation}
    where $\langle X\rangle$ denotes the root subsystem generated by $X$ and $.$ denotes the $W$-action on $\Phi_W$.
\end{definition}

\begin{example}\label{ex:juyumaya}
Let $\phi\colon (W,S)\to (U,T)$ be a morphism of Coxeter systems, and let 
\begin{align*}
e_\phi\colon S \to \Sigma(\Phi_U),\quad 
e_\phi(s)=\langle \phi(s)\rangle.
\end{align*}
 The pair $(\phi,e_\phi)$, where the $\phi$  is the group homomorphism $\phi\colon W\to U$ associated with the morphism of Coxeter systems, is a Juyumaya map. Indeed, since a morphism of Coxeter systems preserves the Coxeter matrix, the map $\phi\colon S\to T$ is $W$-equivariant, and so also $e_\phi$ is $W$-equivariant. Next, as in Example \ref{ex:marin-roots}, the condition $\langle e_{\phi;s_1},w. e_{\phi;s_2}\rangle  = \langle e_{\phi;s_1}, (s_1w). e_{\phi;s_2}\rangle $ is equivalent to 
\[
\langle \phi(s_1),\phi(w). \phi(s_2)\rangle = \langle \phi(s_1),\phi(s_1). (\phi(w).\phi(s_2))\rangle.
\]
Here 
we used the $W$-equivariance of $\phi$ and the fact that $W$ acts on $\Phi_U$, and so on $\Sigma(\Phi_U)$, via $\phi\colon W\to U$. Since $\phi(s_1)\in T$ and $w. \phi(s_2)\in \Phi_U$, the conclusion follows as in Example \ref{ex:marin-roots}.
\end{example}

Let $(\phi,e)$ be a Juyumaya map. 
Recalling the definition of the product in $\Sigma(\Phi_U)$, we  see that the second condition in the definition of a  Juyumaya  map is equivalent to
\[
e_{s_1}\cdot (w. e_{s_2})  =e_{s_1} \cdot ((s_1w). e_{s_2})\quad \text{ for any $s_1,s_2\in S$}.
\]
But this is precisely condition \eqref{eq:mih-generators-e}. In other words we have proven the following.
\begin{theorem}\label{prop:juyumaya}
Let $(W,S)$ and $(U,T)$ be Coxeter systems and let $(\phi,e)$ be a Juyumaya map between them. Let $(R,u)$ be a pair consisting of a commutative  ring $R$ with trivial $W$-action and  a $W$-equivariant map $u\colon S\to R^\times$. Let $\rho_\phi$ denote  the $W$-action on $\Sigma(\Phi_U)$ induced by $\phi\colon W\to U$ and by the canonical action of $U$ on $\Phi_U$. Then $(\Sigma(\Phi_U)_{[e]},\rho_\phi,e)$ is a Marin-Iwahori-Hecke monoid. It follows that $\mathcal{E}^{(W,S)}(R[\Sigma(\Phi_U)_{[e]}],\rho_\phi,\hat{e})$ is a free $R[\Sigma(\Phi_U)_{[e]}]$-module with basis $\{g_w | w\in W\}$ ($\hat{e}$  is as in \eqref{he}). Moreover, the augmentation map $\epsilon\colon R[\Sigma(\Phi_U)_{[e]}]\to R$ induces a surjective algebra homomorphism $
\mathcal{E}^{(W,S)}(R[\Sigma(\Phi_U)_{[e]}],\rho_\phi,\hat{e})\to H_R{(W,S)}(u_s).
$
\end{theorem}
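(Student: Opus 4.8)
The plan is to show that Theorem \ref{prop:juyumaya} is an essentially formal consequence of the machinery already developed, by matching up the data of a Juyumaya map with the hypotheses of Example \ref{example:monoid} together with Remark \ref{rem:mih}. First I would observe that $\Sigma(\Phi_U)$ is a commutative monoid under the composition law $\Psi_1\cdot\Psi_2=\langle\Psi_1,\Psi_2\rangle$, exactly as in Example \ref{ex:marin-roots}, and that all its elements are idempotent; in particular the $W$-invariant submonoid $\Sigma(\Phi_U)_{[e]}$ generated by the elements $w.e_s$ inherits both the monoid structure and the $W$-action $\rho_\phi$, and $e$ factors through a $W$-equivariant map $e\colon S\to\mathrm{Idempotents}(\Sigma(\Phi_U)_{[e]})$ because each $e_s=\langle e_s\rangle$ is already in the image.

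The heart of the argument is to verify condition \eqref{eq:mih} for the triple $(\Sigma(\Phi_U)_{[e]},\rho_\phi,e)$. Here I would invoke the remark following Lemma \ref{lemma:generators}: since $\Sigma(\Phi_U)_{[e]}$ is generated as a monoid by the elements $w.e_{s_2}$ with $w\in W$, $s_2\in S$, condition \eqref{eq:mih} is equivalent to the identity \eqref{eq:mih-generators-e}, namely $e_{s_1}\cdot(w.e_{s_2})=e_{s_1}\cdot((s_1w).e_{s_2})$ for all $s_1,s_2\in S$ and all $w\in W$. But unwinding the definition of the product in $\Sigma(\Phi_U)$, this is exactly the relation $\langle e_{s_1},w.e_{s_2}\rangle=\langle e_{s_1},(s_1w).e_{s_2}\rangle$, which is condition \eqref{JM} in the definition of a Juyumaya map. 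Hence $(\Sigma(\Phi_U)_{[e]},\rho_\phi,e)$ is a Marin-Iwahori-Hecke monoid by definition.

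Once this is established, the remaining conclusions follow immediately from the general theory. By Remark \ref{rem:mih} applied to the Marin-Iwahori-Hecke monoid $(\Sigma(\Phi_U)_{[e]},\rho_\phi,e)$ — with the map $\hat e=1+(u-1)e$ of \eqref{he} — the algebra $\mathcal{E}^{(W,S)}(R[\Sigma(\Phi_U)_{[e]}],\rho_\phi,\hat e)$ is a free $R[\Sigma(\Phi_U)_{[e]}]$-module with basis $\{g_w\mid w\in W\}$; this uses that $e_s(1-\rho_{\phi,s})=0$ forces $(1-\hat e_s)(1-\rho_{\phi,s})=(1-u_s)e_s(1-\rho_{\phi,s})=0$, so Corollary \ref{cor:free} applies. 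Finally, exactly as in Example \ref{example:monoid}, the canonical augmentation $\epsilon\colon R[\Sigma(\Phi_U)_{[e]}]\to R$ sending every element of $\Sigma(\Phi_U)_{[e]}$ to $1$ is $W$-equivariant and satisfies $\epsilon\circ\hat e=u$, so it induces the surjective algebra homomorphism $\mathcal{E}^{(W,S)}(R[\Sigma(\Phi_U)_{[e]}],\rho_\phi,\hat e)\to H_R(W,S)(u_s)$ by Lemma \ref{lemma:epsilon} (or directly by Example \ref{example:monoid}).

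I do not expect a genuine obstacle here: the statement has been engineered precisely so that the definition of a Juyumaya map translates condition \eqref{JM} into condition \eqref{eq:mih-generators-e}. The only point requiring a modicum of care is the reduction from \eqref{eq:mih} to its generator form \eqref{eq:mih-generators-e} on the submonoid $\Sigma(\Phi_U)_{[e]}$ — this is where commutativity of $\Sigma(\Phi_U)$ is used, via Lemma \ref{lemma:generators} — and checking that passing to the $W$-invariant submonoid does not disturb $W$-equivariance of $e$ or the values of $\hat e$; both are routine. So the proof is really just the sentence ``but this is precisely condition \eqref{eq:mih-generators-e}'', followed by citing Remark \ref{rem:mih} and Example \ref{example:monoid}.
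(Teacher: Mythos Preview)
Your proposal is correct and follows essentially the same argument as the paper: the paper's proof is literally the sentence ``but this is precisely condition \eqref{eq:mih-generators-e}'' placed just before the theorem statement, after observing that the product in $\Sigma(\Phi_U)$ turns condition \eqref{JM} into \eqref{eq:mih-generators-e}, with the freeness and surjectivity then inherited from Remark \ref{rem:mih} and Example \ref{example:monoid}. Your write-up is slightly more explicit about the role of Lemma \ref{lemma:generators} and Corollary \ref{cor:free}, but the route is identical.
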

The following Lemmas  will be instrumental to provide nontrivial examples of Juyumaya maps, which will be used in applying Proposition \ref{prop:juyumaya}.

\begin{lemma}\label{lemma:representatives-are-enough}
Let $(W,S)$ and $(U,T)$ be Coxeter systems, and let $\phi\colon W\to U$ be a group homomorphism (not necessarily a morphism of Coxeter systems). Let $e\colon S\to \Sigma(\Phi_U)$ be a $W$-equivariant map (via $\phi$), and let $\mathcal{S}\subseteq S$ be a set of representatives for the equivalence relation on $S$ induced by the  conjugation action of $W$ on itself. Then the following are equivalent.
\begin{enumerate}
\item $(\phi,e)$ is a Juyumaya map;
\item $\langle e_{s},w. e_{s_2}\rangle  = \langle e_{s},(sw). e_{s_2}\rangle $ for each $s\in \mathcal{S}$, $s_2\in S$ and $w\in W$.
\end{enumerate}
\end{lemma}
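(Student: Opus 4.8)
The plan is to prove the equivalence by showing that condition (2), a priori weaker because it only tests representatives $s\in\mathcal{S}$ in the first slot, already implies the full condition \eqref{JM}. The direction (1)$\Rightarrow$(2) is trivial, since (2) is just the special case of \eqref{JM} where $s_1$ is required to lie in $\mathcal{S}$. So the real content is (2)$\Rightarrow$(1): given an arbitrary $s_1\in S$, I must produce the identity $\langle e_{s_1},w.e_{s_2}\rangle=\langle e_{s_1},(s_1w).e_{s_2}\rangle$ for all $s_2\in S$ and $w\in W$ from the analogous identities with the first index ranging only over $\mathcal{S}$.

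First I would fix $s_1\in S$ and pick the representative $s\in\mathcal{S}$ in its $W$-conjugacy class, together with an element $v\in W$ with $v s v^{-1}=s_1$, equivalently $v.s=s_1$ in the notation of Example \ref{ex:same-stabilizer}. By $W$-equivariance of $e$ (via $\phi$) we have $e_{s_1}=v.e_s$, where here and below the dot denotes the $W$-action on $\Sigma(\Phi_U)$ via $\phi$. The strategy is then to rewrite the desired identity for $s_1$ by pulling the element $v$ out: since the $W$-action is by automorphisms of the monoid $\Sigma(\Phi_U)$ — concretely, $\rho_\phi(v)$ sends the root subsystem generated by $X$ to the one generated by $\rho_\phi(v)(X)$, hence commutes with taking $\langle-\rangle$ of products — we get
\[
\langle e_{s_1},w.e_{s_2}\rangle=\langle v.e_s,\,w.e_{s_2}\rangle=v.\langle e_s,\,(v^{-1}w).e_{s_2}\rangle,
\]
and similarly
\[
\langle e_{s_1},(s_1w).e_{s_2}\rangle=v.\langle e_s,\,(v^{-1}s_1w).e_{s_2}\rangle=v.\langle e_s,\,(sv^{-1}w).e_{s_2}\rangle,
\]
using $v^{-1}s_1=v^{-1}(vsv^{-1})=sv^{-1}$. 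Since $\rho_\phi(v)$ is injective on $\Sigma(\Phi_U)$, it suffices to prove $\langle e_s,(v^{-1}w).e_{s_2}\rangle=\langle e_s,(sv^{-1}w).e_{s_2}\rangle$; but this is exactly an instance of hypothesis (2) with the element $w$ there replaced by $v^{-1}w\in W$ and the representative $s\in\mathcal{S}$, and $s_2$ unchanged. This closes the implication, and hence the equivalence.

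The only slightly delicate point, and the one I would state carefully, is the interaction between the $W$-action and the join operation $\langle-,-\rangle$: one needs that for $g\in W$, $\rho_\phi(g)\langle\Psi_1,\Psi_2\rangle=\langle \rho_\phi(g)\Psi_1,\rho_\phi(g)\Psi_2\rangle$, which is immediate because $\rho_\phi(g)$ acts as the linear isomorphism $\rho_{\mathrm{geom}}^U(\phi(g))$ on $V_U$ and therefore carries the root subsystem spanned by a set of roots to the root subsystem spanned by its image. Everything else is bookkeeping with conjugation identities in $W$ ($v.s=s_1$, $v^{-1}s_1=sv^{-1}$) and the $W$-equivariance of $e$, which is given. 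No genuine obstacle arises; the proof is a short "transport of structure" argument, and indeed the paper's own phrasing ("In other words we have proven the following" preceding Theorem \ref{prop:juyumaya}) signals that these Juyumaya-map lemmas are meant to be light reductions rather than substantial results. I would write it out in essentially the three displayed lines above plus a sentence justifying equivariance of $\langle-,-\rangle$.
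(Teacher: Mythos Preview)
Your proof is correct and follows essentially the same route as the paper: pick a conjugating element (your $v$, the paper's $w_1$) sending the representative $s\in\mathcal{S}$ to $s_1$, pull $v$ outside the join using that the $W$-action commutes with $\langle-,-\rangle$, and reduce to an instance of hypothesis (2) with $w$ replaced by $v^{-1}w$. Your explicit remark that $\rho_\phi(g)$ is a monoid automorphism of $\Sigma(\Phi_U)$ (hence commutes with the join) is a point the paper leaves implicit, but otherwise the arguments coincide.
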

\begin{proof}
One implication is clear. To prove the other, let $s_1$ be an element in $S$. Then there exist $s$ in $\mathcal{S}$ and $w_1$ in $W$ such that $s_1=w_1sw_1^{-1}$. Since $e$ is $W$-equivariant, we have
\begin{align*}
\langle e_{s_1},w. e_{s_2}\rangle&=\langle e_{w_1sw_1^{-1}},w. e_{s_2}\rangle=\langle w. e_{s},w_1. ((w_1^{-1}w). e_{s_2})\rangle\\
&=w_1.\langle e_{s}, (w_1^{-1}w). e_{s_2}\rangle=w_1.\langle e_{s},  (sw_1^{-1}w). e_{s_2}\rangle\\
&=\langle w_1. e_{s}, (w_1sw_1^{-1}w). e_{s_2}\rangle=\langle e_{w_1sw_1^{-1}}, (w_1sw_1^{-1}w). e_{s_2}\rangle\\
&= \langle e_{s_1}, (s_1w). e_{s_2}\rangle.
\end{align*}
\end{proof}

\begin{definition}
Let $(W,S)$ and $(U,T)$ be Coxeter systems, and let $\phi\colon W\to U$ be a group homomorphism (not necessarily a morphism of Coxeter systems). 
Let $\mathcal{S}\subseteq S$ be a set of representatives for the equivalence relation on $S$ induced by the conjugation action of $W$ on itself.
For every $s\in \mathcal{S}$, let $K_s\subseteq T$ be the set
\[
K_s=\{t \in T\text{ such that } \phi(C_W(s))\subseteq C_U(t)\}.
\]
 We say that the pair $(\phi,\mathcal{S})$ is a {\it Juyumaya pair} if $K_s$ is nonepty for every $s\in \mathcal{S}$.
\end{definition}
\begin{example}
If  $\phi\colon (W,S)\to (U,T)$ is a morphism of Coxeter systems, then $(\phi,\mathcal{S})$ is a Juyumaya pair for any set of representatives $\mathcal{S}$. Indeed, the element $\phi(s)$ is an element of $T$ in this case, and $\phi(s)\in K_s$.
\end{example}

\begin{lemma}\label{lemma:equivariant}
Let $(W,S)$ and $(U,T)$ be Coxeter systems, and let $(\phi,\mathcal{S})$ be a  Juyumaya pair. Let
$
   t\colon \mathcal{S}\to T
$
   be a map such that $t(s)\in K_s$ for every $s\in \mathcal{S}$. Then there exists a unique $W$-equivariant map $e_t \colon S \rightarrow \Sigma(\Phi_U)$ such that $e_{t;s}=\langle t(s) \rangle$ for every $s\in \mathcal{S}$. 
\end{lemma}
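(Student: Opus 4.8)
The plan is to \emph{define} $e_t$ on all of $S$ by transporting the prescribed values on $\mathcal{S}$ along the conjugation action of $W$, and then to verify, in turn, that this prescription is well defined, that the resulting map is $W$-equivariant, and that it is the unique $W$-equivariant extension. The only place where the hypothesis that $(\phi,\mathcal{S})$ is a Juyumaya pair (i.e. that $t(s)\in K_s$) will enter is the well-definedness step; everything else is formal manipulation with the two $W$-sets involved, namely $S$ with the conjugation action and $\Sigma(\Phi_U)$ with the action through $\phi\colon W\to U$.

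For the construction: given $s_1\in S$, since $\mathcal{S}$ is a set of representatives for the $W$-conjugacy classes of elements of $S$, there are a uniquely determined $s\in\mathcal{S}$ and some $w_1\in W$ with $s_1=w_1sw_1^{-1}$, and I would set $e_{t;s_1}:=\phi(w_1). \langle t(s)\rangle=\langle \phi(w_1). t(s)\rangle\in\Sigma(\Phi_U)$, the action being that of $U$ on root subsystems of $\Phi_U$; taking $w_1=1$ for $s_1\in\mathcal{S}$ recovers $e_{t;s}=\langle t(s)\rangle$. To see that this is independent of the choice of $w_1$, suppose $s_1=w_1sw_1^{-1}=w_1'sw_1'^{-1}$; then $w_1^{-1}w_1'\in C_W(s)$. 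Applying Example \ref{ex:same-stabilizer} to the Coxeter system $(U,T)$, the stabilizer of $\langle t(s)\rangle$ for the $U$-action on root subsystems of $\Phi_U$ is exactly $C_U(t(s))$. Since $t(s)\in K_s$, we have $\phi(C_W(s))\subseteq C_U(t(s))$, so $\phi(w_1^{-1}w_1')$ fixes $\langle t(s)\rangle$, whence $\phi(w_1).\langle t(s)\rangle=\phi(w_1').\langle t(s)\rangle$. This is the heart of the argument and the step I expect to require the most care.

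It then remains to check $W$-equivariance and uniqueness, both of which are short. For equivariance, given $s_1,s_2\in S$ and $w\in W$ with $ws_1w^{-1}=s_2$, writing $s_1=w_1sw_1^{-1}$ with $s\in\mathcal{S}$ yields $s_2=(ww_1)s(ww_1)^{-1}$ with the \emph{same} representative $s$, so $e_{t;s_2}=\phi(ww_1).\langle t(s)\rangle=\phi(w).\bigl(\phi(w_1).\langle t(s)\rangle\bigr)=w. e_{t;s_1}$, which is the condition required by the paper's notion of equivariant map. For uniqueness, if $e'\colon S\to\Sigma(\Phi_U)$ is any $W$-equivariant map with $e'_s=\langle t(s)\rangle$ for all $s\in\mathcal{S}$, then for $s_1=w_1sw_1^{-1}$ with $s\in\mathcal{S}$ we apply the defining property of equivariance to the triple $(s,s_1,w_1)\in S\times S\times W$ (note that $w_1. s=w_1sw_1^{-1}=s_1$ in the conjugation action) to obtain $e'_{s_1}=\phi(w_1). e'_s=\phi(w_1).\langle t(s)\rangle=e_{t;s_1}$, so $e'=e_t$.
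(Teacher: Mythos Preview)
Your proof is correct and follows essentially the same approach as the paper's: define $e_t$ by transporting the values on $\mathcal{S}$ via conjugation, check well-definedness using $\phi(C_W(s))\subseteq C_U(t(s))$, then verify equivariance and uniqueness by straightforward computations. Your explicit appeal to Example~\ref{ex:same-stabilizer} to identify the stabilizer of $\langle t(s)\rangle$ with $C_U(t(s))$ is a slight elaboration, but the structure and substance of the argument match the paper's proof exactly.
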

\begin{proof}
Let $s\in S$. Then there exist $w\in W$ and $s_0\in \mathcal{S}$ such that $s=ws_0w^{-1}$. We set
$
e_t\colon s\mapsto \phi(w). \langle t(s_0) \rangle.
$
We need to show this is a well-posed definition. If $s=\tilde{w} \tilde{s}_0\tilde{w}^{-1}$ for another pair $(\tilde{w},\tilde{s}_0)$, then $s_0$ and $\tilde{s}_0$ are in the same $W$-orbit. Since both $s$ and $\tilde{s}$ are in $\mathcal{S}$ this implies $\tilde{s}_0=s_0$. Then $w^{-1}\tilde{w}$ centralizes $s_0$ and so $\phi(w^{-1}\tilde{w})$ centralizes $t(s_0)$. Therefore $\phi(\tilde{w}). \langle t (\tilde{s}_0)\rangle=\phi(\tilde{w}). \langle t(s_0) \rangle=\phi(w). \langle t(s_0) \rangle$. The map $e$ is $W$-equivariant: if $(s_1,s_2,w)\in S\times S\times W$ is a triple such that $ws_1w^{-1}=s_2$, let $s_0\in \mathcal{S}$ and $w_0\in W$ be such that $s_1=w_0s_0w_0^{-1}$. Then  $s_2=(ww_0)s_0(ww_0)^{-1}$ and so
\begin{align*}
e_{t;s_2}
&=(\phi(ww_0)). \langle t(s_0) \rangle=(\phi(w)\phi(w_0)). \langle t(s_0) \rangle\\
&=\phi(w).(\phi(w_0). \langle t(s_0) \rangle)=\phi(w). e_{t;s_1}.
\end{align*}
Uniqueness is clear: if $\tilde{e}$ is another $W$-equivariant map with $\tilde{e}\vert_{\mathcal{S}}=\langle t \rangle$, then for any $s\in S$ we have
$
\tilde{e}_{s}=\tilde{e}_{ws_0w^{-1}}=\phi(w).\tilde{e}_{s_0}=\phi(w). \langle t(s_0) \rangle=e_{t;s},
$
where $(w,s)$ is any pair in $W\times \mathcal{S}$ such that $s=w s_0w^{-1}$. 
\end{proof}
\begin{definition}
In the same notation as in Lemma \ref{lemma:equivariant}, we say that $(\phi,\mathcal{S},t)$ is a {\it  Juyumaya triple} if $\phi(s)\in\{1,t(s)\}$ for any $s\in \mathcal{S}$.
\end{definition}
\begin{lemma}
Let $(\phi,\mathcal{S},t)$ be a Juyumaya triple. Then $(\phi,e_t)$ is a Juyumaya map, where $e_t\colon S \rightarrow \Sigma(\Phi_U)$ is the map defined in Lemma \ref{lemma:equivariant}.
\end{lemma}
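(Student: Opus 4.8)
The plan is to verify that $(\phi,e_t)$ satisfies the two requirements in Definition~\ref{def:j-map}: namely that $e_t$ is $W$-equivariant via $\phi$, and that condition \eqref{JM} holds. The first requirement is already settled: $e_t$ is $W$-equivariant by the construction in Lemma~\ref{lemma:equivariant}. So the whole content is to establish \eqref{JM}, i.e.
\[
\langle e_{t;s_1},w.e_{t;s_2}\rangle=\langle e_{t;s_1},(s_1w).e_{t;s_2}\rangle\quad\text{for all }s_1,s_2\in S,\ w\in W.
\]
By Lemma~\ref{lemma:representatives-are-enough} it suffices to check this when $s_1\in\mathcal{S}$, so I may assume $s_1=s\in\mathcal{S}$ and unwind $e_{t;s}=\langle t(s)\rangle$, while $e_{t;s_2}=\phi(w_2).\langle t(s_0)\rangle$ for appropriate $w_2\in W$, $s_0\in\mathcal{S}$; writing $\beta=\phi(w).e_{t;s_2}\in\Sigma(\Phi_U)$, the equation becomes $\langle\langle t(s)\rangle,\beta\rangle=\langle\langle t(s)\rangle,\phi(s).\beta\rangle$, where the $W$-action on $\Sigma(\Phi_U)$ is through $\phi$. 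Since root subsystems are generated by their roots, this reduces to showing, for each root $\gamma$ occurring in $\beta$,
\[
\langle t(s),\gamma\rangle=\langle t(s),\phi(s).\gamma\rangle\qquad\text{in }\Sigma(\Phi_U).
\]

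The key input is the Juyumaya-triple hypothesis $\phi(s)\in\{1,t(s)\}$. If $\phi(s)=1$ then $\phi(s).\gamma=\gamma$ and the identity is trivial. If $\phi(s)=t(s)=:t_s$, then $\phi(s)$ acts on $\Phi_U$ as the reflection $\sigma_{t_s}$ (this is the canonical action of $U$ on $\Phi_U$, and $t_s\in T$), so the claim is
\[
\langle t_s,\gamma\rangle=\langle t_s,\sigma_{t_s}(\gamma)\rangle.
\]
This is exactly the ``general fact in the theory of Coxeter groups'' invoked in Example~\ref{ex:marin-roots}: the root subsystem generated by a reflection-root $t_s$ together with any root $\gamma$ is unchanged if one replaces $\gamma$ by its $\sigma_{t_s}$-image, because $\sigma_{t_s}(\gamma)$ already lies in $\langle t_s,\gamma\rangle$ (as $\sigma_{t_s}(\gamma)=\gamma-\langle\gamma,t_s^\vee\rangle t_s$ is an integer combination of $t_s$ and $\gamma$, and conversely $\gamma$ is recovered by applying $\sigma_{t_s}$ again). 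Hence both sides coincide. Applying this to every $\gamma$ in $\beta$ gives $\langle\langle t_s\rangle,\beta\rangle=\langle\langle t_s\rangle,\sigma_{t_s}.\beta\rangle$, which is \eqref{JM} in the reduced case $s_1\in\mathcal{S}$.

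I expect the only genuine subtlety to be bookkeeping: making sure the reduction via Lemma~\ref{lemma:representatives-are-enough} is applied correctly and that the $W$-action on $\Sigma(\Phi_U)$ ``via $\phi$'' is consistently tracked through the equivariance of $e_t$, so that the arbitrary element $w.e_{t;s_2}$ really is of the form $\phi(w').\langle t(s_0)\rangle$ for a single generator $t(s_0)$, reducing the root-system identity to the one-reflection statement above. Once the case analysis on $\phi(s)\in\{1,t(s)\}$ is in place, no further computation is needed; the Coxeter-theoretic fact $\langle t_s,\gamma\rangle=\langle t_s,\sigma_{t_s}(\gamma)\rangle$ does all the remaining work, exactly as in Example~\ref{ex:marin-roots}.
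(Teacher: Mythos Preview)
Your proposal is correct and follows essentially the same route as the paper: reduce to $s_1\in\mathcal{S}$ via Lemma~\ref{lemma:representatives-are-enough}, split on $\phi(s)\in\{1,t(s)\}$, and in the nontrivial case invoke the Coxeter-group fact $\langle t_0,\gamma\rangle=\langle t_0,\sigma_{t_0}(\gamma)\rangle$. The only cosmetic difference is that the paper immediately notes $\phi(w).e_{t;s_2}=\langle\beta\rangle$ is a rank-one root subsystem (since $e_{t;s_2}$ is, by construction, a $\phi(W)$-translate of some $\langle t(s_0)\rangle$), whereas you first allow $\beta$ to be an arbitrary root subsystem and reduce to its constituent roots $\gamma$; both work, and your reduction is valid since $\sigma_{t(s)}(\gamma)\in\langle t(s),\gamma\rangle$ for each $\gamma$ gives the needed two-sided inclusion.
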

\begin{proof}
By Lemma \ref{lemma:representatives-are-enough} we only need to check that
$\langle e_{t;s},w. e_{t;s_2}\rangle  = \langle e_{t;s},(sw). e_{t;s_2}\rangle $ for each $s\in \mathcal{S}$, $s_2\in S$  and $w\in W$. This is equivalent to 
\begin{equation}\label{eq:two-cases}
\langle e_{t;s},\phi(w). e_{t;s_2}\rangle  = \langle e_{t;s},\phi(s). (\phi(w). e_{t;s_2})\rangle
\end{equation}
 for each $s\in \mathcal{S}$, $s_2\in S$  and $w\in W$. By assumption we have two possibilities for $s\in \mathcal{S}$: either $\phi(s)=1$ or $\phi(s)=t(s)$. In the first case, equation \eqref{eq:two-cases} is trivially satisfied. In the second case \eqref{eq:two-cases} becomes 
\[
\langle t(s),\phi(w). e_{t;s_2}\rangle  = \langle t(s),t(s). (\phi(w). e_{t;s_2})\rangle.
\]
We have $t(s)=t_0$ for some $t_0\in T$ and $\phi(w). e_{t;s_2}=\langle \beta \rangle$ for some $\beta$ in $\Phi_U$. We then use again the general fact from the theory of Coxeter groups that for any $t_0$ in $T$ and any $\beta$ in $\Phi_U$ one has $\langle t_0, \beta \rangle=\langle t_0,t_0. \beta \rangle$, to deduce
$\langle t_0,\langle \beta \rangle \rangle=\langle t_0,t_0. \langle \beta \rangle\rangle$.  
\end{proof}
\begin{lemma}\label{lemma:to-produce-triples}
Let $(W,S)$ and $(U,T)$ be Coxeter systems, and let $\phi\colon W\to U$ be a group homomorphism (not necessarily a morphism of Coxeter systems) such that $\phi(S)\subseteq T\cup \{1\}$. Let $\mathcal{S}\subseteq S$ be a set of representatives for the equivalence relation on $S$ induced by the conjugation action of $W$ on itself, and let $\mathcal{S}_0=\mathcal{S}\cap \ker(\phi)$. Assume $K_s\neq \emptyset$ for any $s\in \mathcal{S}_0$, and let 
$
   t_0\colon \mathcal{S}_0\to T
$
   be a map such that $t(s)\in K_s$ for every $s\in \mathcal{S}_0$. Let
$
   t\colon \mathcal{S}\to T
$
   be the map defined by
   \[
   t(s)=\begin{cases}
   t_0(s)&\text{ if } s\in \mathcal{S}_0,\\
   \phi(s) &\text{ if } s\not \in \mathcal{S}_0.
   \end{cases}
   \]  
 Then $(\phi,\mathcal{S},t)$ is a Juyumaya triple, and so $(\phi, e_t)$ is a Juyumaya map, where $e_t$ is the map from Lemma \ref{lemma:equivariant}.
\end{lemma}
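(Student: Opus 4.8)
The plan is to verify directly that $(\phi,\mathcal{S},t)$ satisfies the three conditions defining a Juyumaya triple, namely that $(\phi,\mathcal{S})$ is a Juyumaya pair, that $t(s)\in K_s$ for every $s\in\mathcal{S}$, and that $\phi(s)\in\{1,t(s)\}$ for every $s\in\mathcal{S}$. Once this is established, the assertion that $(\phi,e_t)$ is a Juyumaya map is precisely the content of the Lemma immediately preceding this one, and the proof is complete.

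First I would split $\mathcal{S}$ into $\mathcal{S}_0=\mathcal{S}\cap\ker(\phi)$ and its complement. For $s\in\mathcal{S}_0$ there is nothing new to check: $K_s\neq\emptyset$ is the standing hypothesis, $t(s)=t_0(s)\in K_s$ by the choice of $t_0$, and $\phi(s)=1$ since $s\in\ker(\phi)$, so $\phi(s)\in\{1,t(s)\}$ trivially.

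The only point requiring an argument is the case $s\in\mathcal{S}\setminus\mathcal{S}_0$. Here $s\notin\ker(\phi)$, so $\phi(s)\neq 1$; since $\phi(S)\subseteq T\cup\{1\}$ by assumption, this forces $\phi(s)\in T$. I then claim $\phi(s)\in K_s$. Indeed, if $w\in C_W(s)$, then $wsw^{-1}=s$, and applying $\phi$ yields $\phi(w)\phi(s)\phi(w)^{-1}=\phi(s)$, that is, $\phi(w)\in C_U(\phi(s))$; hence $\phi(C_W(s))\subseteq C_U(\phi(s))$, which is exactly the defining condition for $\phi(s)$ to belong to $K_s$. This shows at once that $K_s\neq\emptyset$ and that $t(s)=\phi(s)\in K_s$, while $\phi(s)=t(s)\in\{1,t(s)\}$ is immediate. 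Combining the two cases, $K_s\neq\emptyset$ for every $s\in\mathcal{S}$, so $(\phi,\mathcal{S})$ is a Juyumaya pair, $t$ is a well-defined map $\mathcal{S}\to T$ with $t(s)\in K_s$ for all $s$, and $\phi(s)\in\{1,t(s)\}$ for all $s\in\mathcal{S}$; thus $(\phi,\mathcal{S},t)$ is a Juyumaya triple. Invoking the preceding Lemma gives that $(\phi,e_t)$ is a Juyumaya map, with $e_t$ the map constructed in Lemma \ref{lemma:equivariant}.

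I do not expect any genuine obstacle: the argument is a matter of unwinding definitions, and its one non-formal ingredient — that a group homomorphism carries centralizers into centralizers — is immediate. The only mild subtlety is bookkeeping the roles of $\ker(\phi)$ and of the hypothesis $\phi(S)\subseteq T\cup\{1\}$, which together ensure that both branches of the definition of $t$ are well posed and take values in $T$.
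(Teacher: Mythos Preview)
Your proof is correct and follows essentially the same approach as the paper's own proof: split $\mathcal{S}$ into $\mathcal{S}_0$ and its complement, use the hypotheses directly for the first case, and for the second case observe that $\phi(s)\in T$ and $\phi(s)\in K_s$. The only difference is that you spell out the centralizer argument showing $\phi(C_W(s))\subseteq C_U(\phi(s))$, which the paper leaves implicit.
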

\begin{proof}
Clearly $\phi(s)\in\{1,t(s)\}$ for any $s\in\mathcal{S}$ so we only need to show that $(\phi,\mathcal{S})$ is a Juyumaya pair and that $t(s)\in K_s$ for any $s\in \mathcal{S}$. If $s\in \mathcal{S}_0$, then $K_s\neq \emptyset$ and $t(s)=t_0(s)\in K_s$ by assumption; if $s\not\in \mathcal{S}_0$, then $\phi(s)\in T$ and $t(s)=\phi(s)\in K_s$.
\end{proof}
\begin{remark}\label{rem:morpism-of-coxeter-systems}
If $\phi\colon (W,S)\to (U,T)$ is a morphism of Coxeter systems, then $\phi(S)\subseteq T$ and the assumptions of Lemma \ref{lemma:to-produce-triples} 
are trivially satisfied for any $\mathcal{S}$, since $\mathcal{S}_0=\emptyset$ in this case. It follows that the map $t$ is the restriction of $\phi$ to $\mathcal{S}$. By uniqueness, $e_t=e_\phi$, where $e_\phi$ is the map defined in  Example \ref{ex:juyumaya}. This way we recover Example \ref{ex:juyumaya} as a particular case of Lemma \ref{lemma:to-produce-triples}.
\end{remark}
\begin{lemma}
Let $(W,S)$, $(U,T)$ and $(Z,R)$ be Coxeter systems, and let $(\phi,\mathcal{S})$, with $\phi\colon W\to U$ and $\mathcal{S}\subseteq S$, be a pair satisfying the assumptions of Lemma \ref{lemma:to-produce-triples}, and $\psi\colon (U,T)\to (Z,R)$ be a morphism of Coxeter systems. Then the pair $(\psi\circ\phi,\mathcal{S})$ satisfies the assumptions of Lemma \ref{lemma:to-produce-triples}.
\end{lemma}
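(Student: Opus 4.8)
The plan is to unravel what ``the assumptions of Lemma~\ref{lemma:to-produce-triples}'' mean for the pair $(\psi\circ\phi,\mathcal S)$ and to verify each of them by reducing to the corresponding property already assumed for $(\phi,\mathcal S)$ together with the single extra fact that a morphism of Coxeter systems satisfies $\psi(T)\subseteq R$. Concretely, there are three things to check: (i) $\psi\circ\phi$ is a group homomorphism $W\to Z$ with $(\psi\circ\phi)(S)\subseteq R\cup\{1\}$; (ii) $\mathcal S$ is a set of representatives for the conjugation action of $W$ on $S$; and (iii) writing $\mathcal S_0'=\mathcal S\cap\ker(\psi\circ\phi)$, the set $K_s'=\{\,r\in R : (\psi\circ\phi)(C_W(s))\subseteq C_Z(r)\,\}$ is nonempty for every $s\in\mathcal S_0'$.

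Ingredients (i) and (ii) are immediate. For (i): $\psi\circ\phi$ is a composite of group homomorphisms, and $(\psi\circ\phi)(S)=\psi(\phi(S))\subseteq\psi(T\cup\{1\})\subseteq R\cup\{1\}$, because $\phi(S)\subseteq T\cup\{1\}$ by hypothesis, $\psi(T)\subseteq R$ since $\psi$ is a morphism of Coxeter systems, and $\psi(1)=1$. Ingredient (ii) involves only $(W,S)$ and is unchanged, since $\mathcal S$ was already such a set of representatives for the pair $(\phi,\mathcal S)$.

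For (iii) I would first argue that $\mathcal S_0'=\mathcal S_0:=\mathcal S\cap\ker\phi$. The inclusion $\mathcal S_0\subseteq\mathcal S_0'$ is clear from $\ker\phi\subseteq\ker(\psi\circ\phi)$. Conversely, if $s\in\mathcal S$ satisfies $\psi(\phi(s))=1$, then, since $\phi(s)\in T\cup\{1\}$ and every element of $R$ is a Coxeter generator of $(Z,R)$ --- hence an involution, in particular nontrivial --- no element of $T$ is sent to $1$ by $\psi$; therefore $\phi(s)\notin T$, so $\phi(s)=1$ and $s\in\mathcal S_0$. Thus it suffices to exhibit, for each $s\in\mathcal S_0$, an element of $K_s'$. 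Since $(\phi,\mathcal S)$ satisfies the assumptions of Lemma~\ref{lemma:to-produce-triples}, the set $K_s=\{\,t\in T:\phi(C_W(s))\subseteq C_U(t)\,\}$ is nonempty; pick $t\in K_s$. Then $\psi(t)\in R$, and since a group homomorphism carries elements commuting with $t$ to elements commuting with $\psi(t)$,
\[
(\psi\circ\phi)(C_W(s))=\psi\bigl(\phi(C_W(s))\bigr)\subseteq\psi\bigl(C_U(t)\bigr)\subseteq C_Z(\psi(t)),
\]
so $\psi(t)\in K_s'$ and $K_s'\neq\emptyset$. Combining (i)--(iii) gives the claim.

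I do not expect a real obstacle in this argument; the only slightly delicate step is the equality $\mathcal S_0'=\mathcal S_0$, which is exactly where the hypothesis that $\psi$ be a \emph{morphism of Coxeter systems} (and not merely a group homomorphism) is used: it guarantees that $\psi$ does not kill any generator in $T$, so that postcomposition with $\psi$ does not enlarge the trace of the kernel on $\mathcal S$, and the nonemptiness condition then follows from that for $(\phi,\mathcal S)$ by the one-line computation displayed above.
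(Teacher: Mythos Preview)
Your proof is correct and follows the same route as the paper's: verify $(\psi\circ\phi)(S)\subseteq R\cup\{1\}$, then for $s$ in the kernel trace pick $t\in K_s$ and observe $\psi(t)\in K_s'$ via $\psi(C_U(t))\subseteq C_Z(\psi(t))$. You are in fact more careful than the paper on one point: you explicitly argue that $\mathcal S_0'=\mathcal S\cap\ker(\psi\circ\phi)$ coincides with $\mathcal S_0=\mathcal S\cap\ker\phi$ (using that $\psi$, being a morphism of Coxeter systems, kills no element of $T$), whereas the paper's proof tacitly works with the old $\mathcal S_0$ without justifying this identification.
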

\begin{proof}
For any $s\in S$ we have $(\psi\circ\phi)(s)=\psi(1)$ or $(\psi\circ\phi)(s)=\psi(t)$ for some $t\in T$. In either case we have that $(\psi\circ\phi)(s)\in R\cup\{1\}$, since $\psi$ is a morphism of Coxeter systems. Let $s\in \mathcal{S}_0$. Then there exists $t\in T$ such that $\phi(C_W(s))\subseteq C_U(t)$. Let $u\in U$ be the element $u=\psi(t)$. Then we have 
$
(\psi\circ\phi)(C_W(s))\subseteq \psi(C_U(t))\subseteq C_Z(u).
$
\end{proof}
\begin{lemma}\label{lem:composition}
Let $(W,S)$, $(U,T)$ and $(Z,R)$ be Coxeter systems, and let $(\phi,\mathcal{S})$, with $\phi\colon W\to U$ and $\mathcal{S}\subseteq S$, be a pair satisfying the assumptions of Lemma \ref{lemma:to-produce-triples}, and $\psi\colon (Z,R)\to (W,S)$ be a morphism of Coxeter systems. Let $\mathcal{R}\subseteq R$ be a set of representatives for the equivalence relation on $R$ induced by the conjugation action of $Z$ on itself. If $\phi(\mathcal{R})\subseteq \mathcal{S}$, then the pair $(\phi\circ\psi,\mathcal{R})$ satisfies the assumptions of Lemma \ref{lemma:to-produce-triples}.
\end{lemma}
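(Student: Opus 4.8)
The plan is to verify, one at a time, the three ingredients that constitute ``the assumptions of Lemma \ref{lemma:to-produce-triples}'' for the pair $(\phi\circ\psi,\mathcal{R})$: that $\phi\circ\psi\colon Z\to U$ is a group homomorphism with $(\phi\circ\psi)(R)\subseteq T\cup\{1\}$; that $\mathcal{R}$ is a set of representatives for the conjugacy relation on $R$; and that $K_r\neq\emptyset$ for every $r$ in $\mathcal{R}_0:=\mathcal{R}\cap\ker(\phi\circ\psi)$, where $K_r=\{t\in T\mid (\phi\circ\psi)(C_Z(r))\subseteq C_U(t)\}$. This is the exact counterpart, on the source side, of the preceding lemma about $(\psi\circ\phi,\mathcal{S})$, and the argument has the same flavour.

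The first two points are essentially free. Since $\psi$ is a morphism of Coxeter systems it is in particular a group homomorphism sending $R$ into $S$, so $\phi\circ\psi$ is a group homomorphism and $(\phi\circ\psi)(R)=\phi(\psi(R))\subseteq\phi(S)\subseteq T\cup\{1\}$, the last inclusion being exactly the hypothesis on $\phi$ inherited from Lemma \ref{lemma:to-produce-triples}. The second point is literally part of the hypotheses of the statement.

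For the third point I would fix $r\in\mathcal{R}_0$ and set $s:=\psi(r)\in S$. The hypothesis $\psi(\mathcal{R})\subseteq\mathcal{S}$ gives $s\in\mathcal{S}$, and from $\phi(s)=(\phi\circ\psi)(r)=1$ I get $s\in\mathcal{S}\cap\ker(\phi)=\mathcal{S}_0$. Since $(\phi,\mathcal{S})$ satisfies the assumptions of Lemma \ref{lemma:to-produce-triples}, there is some $t\in T$ with $\phi(C_W(s))\subseteq C_U(t)$. I would then argue that this very $t$ witnesses $K_r\neq\emptyset$, using the elementary fact that a group homomorphism carries a centralizer into the centralizer of the image: if $z\in C_Z(r)$ then $\psi(z)\psi(r)\psi(z)^{-1}=\psi(r)=s$, so $\psi(C_Z(r))\subseteq C_W(s)$; applying $\phi$ yields $(\phi\circ\psi)(C_Z(r))\subseteq\phi(C_W(s))\subseteq C_U(t)$, i.e.\ $t\in K_r$. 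This establishes all the assumptions of Lemma \ref{lemma:to-produce-triples} for $(\phi\circ\psi,\mathcal{R})$.

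I do not anticipate a genuine obstacle; the only delicate point worth flagging is the precise role of the hypothesis $\psi(\mathcal{R})\subseteq\mathcal{S}$. If one only knew that $\psi(r)$ were $W$-conjugate to some $s_0\in\mathcal{S}$, say $\psi(r)=w s_0 w^{-1}$, then from $\phi(C_W(s_0))\subseteq C_U(t_0)$ one would only obtain $\phi(C_W(\psi(r)))=\phi(w)\phi(C_W(s_0))\phi(w)^{-1}\subseteq C_U\bigl(\phi(w)\,t_0\,\phi(w)^{-1}\bigr)$, and the conjugate reflection $\phi(w)\,t_0\,\phi(w)^{-1}$ need not belong to $T$, so $K_r$ could a priori be empty. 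Requiring $\psi$ to map the chosen representatives $\mathcal{R}$ directly into the chosen representatives $\mathcal{S}$ is exactly what removes this difficulty, since it forces $s=\psi(r)$ itself to lie in $\mathcal{S}_0$.
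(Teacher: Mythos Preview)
Your proof is correct and follows essentially the same route as the paper's own argument: verify $(\phi\circ\psi)(R)\subseteq T\cup\{1\}$ via $\psi(R)\subseteq S$ and $\phi(S)\subseteq T\cup\{1\}$, then for $r\in\mathcal{R}_0$ use $\psi(r)\in\mathcal{S}_0$ to pick $t\in T$ with $\phi(C_W(\psi(r)))\subseteq C_U(t)$ and conclude via $\psi(C_Z(r))\subseteq C_W(\psi(r))$. You also correctly read the hypothesis as $\psi(\mathcal{R})\subseteq\mathcal{S}$ (the paper's statement has the evident typo $\phi(\mathcal{R})\subseteq\mathcal{S}$), and your closing remark on why this hypothesis is needed is a nice addition not present in the paper.
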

\begin{proof}
Since the image of $\phi\circ\psi$ is contained in the image of $\phi$ we have $(\phi\circ\psi)(R)\subseteq T\cup\{1\}$. Let $\mathcal{R}_0=\mathcal{R}\cap \ker(\phi\circ\psi)$. Since $\psi$ is a morphism of Coxeter systems, if $r\in \mathcal{R}_0$ then $\psi(r)\in \mathcal{S}\cap \ker(\phi)=\mathcal{S}_0$. Then there exists $t\in T$ such that $\phi(C_{W}(\psi(r))\subseteq C_U(t)$, and so $(\phi\circ\psi)(C_Z(r))=\phi(\psi(C_Z(r)))\subseteq \phi(C_W(\psi(r))\subseteq C_U(t)$.
\end{proof}

\section{Examples}
We here provide examples of pairs $(\phi,\mathcal{S})$ satisfying the assumptions of Lemma \ref{lemma:to-produce-triples}, and so producing Juyumaya maps. 


We denote a root system of finite type $X$ (resp. affine type $\widehat X$) with Coxeter diagram of vertices $i_1,\ldots, i_n$ by $X_{i_1,\ldots, i_n}$ (resp. $\widehat X_{i_1,\ldots, i_n}$); the corresponding reflection group is denoted by $W( X_{i_1,\ldots, i_n})$. When no confusion is possible, we will denote a morphism of Coxeter systems 
\[
\phi\colon (W(X_{i_1,\ldots, i_n});\{s_{i_1},\dots,s_{i_n}\})\to (W(Y_{j_1,\ldots, j_m});\{t_{j_1},\dots,t_{j_m}\})
\] 
simply by $\phi\colon X\to Y$. A similar convention applies when one or both of the Coxeter diagrams are of affine type.

The first series of examples derives from Remark \ref{rem:morpism-of-coxeter-systems}, so they are examples of morphisms of Coxeter systems.

\begin{example}\label{ex:terminal}
The Coxeter system $A_1$ is terminal among Coxeter systems: for any Coxeter system $X_{i_1,\dots, i_n}$ there exists a unique morphism of Coxeter systems 
\[
\phi\colon (W(X_{i_1,\ldots, i_n});\{s_{i_1},\dots,s_{i_n}\})\to (W(A_1);\{t_{1}\}).
\] 
Namely, the unique map $\{s_{i_1},\dots,s_{i_n}\}\to \{t_1\}$ extends to a morphisms of groups $W(X_{i_1,\ldots, i_n})\to W(A_1)\cong S_2$.
\end{example}

\begin{example}\label{ex:A-to-A}
For any Coxeter system $X$, the identity of $X$ is a morphism of Coxeter systems. In particular, if we consider the Coxeter diagrams
\[
A_{1\ldots n}:  \dynkin[labels={1,2,3,,n-1,n},             scale=2] A{ooo...ooo}
\]
and
\[
A_{1\ldots n}:  \dynkin[labels={1,2,3,,n-1,n},             scale=2] A{ooo...ooo}
\]
then the map 
\[
\phi(s_i)=t_i
\] 
defines a morphism of Coxeter systems $\varphi\colon A\to A$. Despite being trivial, this example will play a role in the following section. 
\end{example}

\begin{example}\label{ex:D-to-A}
Consider the Coxeter diagrams
\[
D_{1',1'',2,\ldots n}:  \dynkin[labels={n,n-1,n-2,,2,1',1''}, label directions={,,,,,,right,,},
            scale=2,backwards= true] D{ooo...oooo}
\]
and
\[
A_{1\ldots n}:  \dynkin[labels={1,2,3,,n-1,n},             scale=2] A{ooo...ooo}
\]
The map $
\phi(s_i)=\begin{cases}
t_1&\text{if }i\in \{1',1''\}\\
t_i & \text{if }i\geq2 
\end{cases}
$
defines a morphism of Coxeter systems $\varphi\colon D\to A$.
\end{example}

\begin{example}\label{ex:hatD-to-D}
Consider the Coxeter diagrams
\[
\widehat D_{1',1'',2\ldots, n',n''}: \dynkin[labels={1',1'',2,3,,n-2,n-1,n',n''}, label directions={,,left,,,,right,,},
            scale=2,
            extended] D{oooo...oooo}
\]
and
\[
D_{1',1'',2,\ldots n}:  \dynkin[labels={n,n-1,n-2,,2,1',1''}, label directions={,,,,,,right,,},
            scale=2,backwards= true] D{ooo...oooo}
\]
The map 
$
\phi(s_i)=\begin{cases}
t_n&\text{if }i\in \{n',n''\}\\
t_i & \text{if }i\leq n-1 \text{ or } i\in\{1',1''\}
\end{cases}
$
is a morphism of Coxeter systems $\phi\colon \widehat{D}\to D$.
\end{example}
\begin{example}
By combining Examples \ref{ex:hatD-to-D} and \ref{ex:D-to-A} we get a morphism of Coxeter systems $\hat{D}\to A$.
\end{example}

\begin{example}\label{ex:hatB-to-B}
Consider the Coxeter diagrams
\[
\widehat B_{1\ldots n',n''}: \dynkin[labels={n',n'',n-1,n-2,,3,2,1}, scale=2, extended,Coxeter,backwards= true, label directions={,,left,,,,,,}]B{oooo...ooo}
\]
and
\[
B_{1\ldots n}:  \dynkin[labels={n,n-1,,3,2,1},             scale=2,Coxeter,backwards= true] B{ooo...ooo}
\]
The map
$
\phi(s_i)=\begin{cases}
t_n&\text{if }i\in \{n',n''\}\\
t_i & \text{if }i\leq n-1
\end{cases}
$
is a morphism of Coxeter systems $\phi\colon \widehat{B}\to B$.
\end{example}
 
To produce more elaborate examples we need the following general
result, which describes the centralizer $C_W(r)$  of a simple reflection $r$  in a Coxeter group.
 Let $(W,S)$ be a Coxeter system. For $s\in S$, let $\Gamma^W(s)$ denote the set of roots orthogonal to $\alpha_s$, and for $w\in W$, let $N(w)$ denote the inversion set of $w$, i.e. 
$$N(w)=\{\alpha\in \Phi^+\mid w^{-1}(\alpha)\in -\Phi^+\}.$$

\begin{proposition}\cite[\S2, Theorem]{B}\label{prop:cenralizers}
For every  $r\in S$ one has
$$C_W(r)=W(\Gamma^W(r)\cup \{\pm\alpha_r\})\rtimes Y^W_{r,r},$$
where
$
Y^W_{r,r}=\{u\in W\mid \alpha_r=u(\alpha_{r}),\,N(u)\cap \Gamma^W(r)=\emptyset\}.
$
Moreover $W(\Gamma^W(r)\cup \{\pm\alpha_r\})$ is a  Coxeter group, and $Y^W_{r,r}$ is free of rank $e(r)- n(r)+ 1$, where $e(r)$ denotes the number of edges and $n(r)$ the number of vertices of the connected component of the odd Coxeter graph of $(W,S)$, i.e, the graph with vertex set $S$  and edge set 
$\{\{r, s\} \mid  m_{r,s} \text{ is odd}\}$, containing $r$.
\end{proposition}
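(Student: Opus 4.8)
The plan is to recall and carry out the three stages of the argument of \cite[\S2]{B}: (i) identify inside $C_W(r)$ the subgroup generated by the reflections it contains; (ii) exhibit $Y^W_{r,r}$ as a complement to that subgroup; (iii) prove the freeness of $Y^W_{r,r}$ and the rank formula. For (i), a reflection $\sigma_\beta$ commutes with $\sigma_{\alpha_r}$ exactly when $\sigma_{\alpha_r}(\beta)=\pm\beta$, i.e. when $\beta\in\Gamma^W(r)$ or $\beta\in\mathbb{R}\alpha_r$; hence the reflections lying in $C_W(r)$ are precisely $\sigma_{\alpha_r}$ and the $\sigma_\beta$ with $\beta\in\Gamma^W(r)$, and the subgroup they generate is $W(\Gamma^W(r)\cup\{\pm\alpha_r\})$. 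By the Deodhar--Dyer theorem that every reflection subgroup of a Coxeter group is a Coxeter group on its canonical generators, $W(\Gamma^W(r)\cup\{\pm\alpha_r\})$ is a Coxeter group; and since conjugation by an element of $C_W(r)$ permutes the set of reflections of $C_W(r)$, this subgroup is normal in $C_W(r)$.

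For (ii), one checks first that $Y^W_{r,r}\subseteq C_W(r)$ (if $u\alpha_r=\alpha_r$ then $u\sigma_{\alpha_r}u^{-1}=\sigma_{u\alpha_r}=\sigma_{\alpha_r}$) and that $Y^W_{r,r}$ is a subgroup of $W$: this uses the cocycle identity for inversion sets together with the fact that an element fixing $\alpha_r$ permutes the set $\Gamma^W(r)$ of roots orthogonal to $\alpha_r$, so that the condition $N(u)\cap\Gamma^W(r)=\emptyset$ is stable under products and inverses. Next $W(\Gamma^W(r)\cup\{\pm\alpha_r\})\cap Y^W_{r,r}=\{1\}$: an element of the reflection subgroup that fixes $\alpha_r$ lies in $W(\Gamma^W(r))$, whose root system is exactly $\Gamma^W(r)$, so all of its inversions lie in $\Gamma^W(r)$ and the condition $N(u)\cap\Gamma^W(r)=\emptyset$ forces $N(u)=\emptyset$, i.e. $u=1$. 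Finally, given $g\in C_W(r)$ we have $g\alpha_r=\pm\alpha_r$; replacing $g$ by $g\sigma_{\alpha_r}$ if necessary we may assume $g\alpha_r=\alpha_r$. The set $N(g)\cap\Gamma^W(r)$ is finite (because $N(g)$ is) and biconvex inside the root system $\Gamma^W(r)$, hence equals $N(x)$ for a unique $x\in W(\Gamma^W(r))$ by the standard bijection between finite biconvex sets of positive roots and elements of a Coxeter group, applied to $W(\Gamma^W(r))$; then $u:=x^{-1}g$ satisfies $u\alpha_r=\alpha_r$ and $N(u)\cap\Gamma^W(r)=\emptyset$, so $g=xu$ with $x\in W(\Gamma^W(r))$ and $u\in Y^W_{r,r}$. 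With normality from (i), this gives $C_W(r)=W(\Gamma^W(r)\cup\{\pm\alpha_r\})\rtimes Y^W_{r,r}$.

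For (iii) I would interpret the complement as an automorphism group in a groupoid built from the odd Coxeter graph. Let $\mathcal{C}$ be the connected component of $r$ in the odd Coxeter graph of $(W,S)$; its vertices are exactly the simple reflections conjugate to $r$, there being $n(r)$ of them and $e(r)$ edges. Form the groupoid $\mathcal{Z}$ with object set $\mathrm{Vert}(\mathcal{C})$ and $\mathrm{Hom}_{\mathcal{Z}}(s,s')=\{w\in W\mid w\alpha_s=\alpha_{s'},\ N(w)\cap\Gamma^W(s)=\emptyset\}$, with composition induced from multiplication in $W$, so that $\mathrm{Aut}_{\mathcal{Z}}(r)=Y^W_{r,r}$. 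For each edge $\{s,t\}$ of $\mathcal{C}$ the bond $m_{s,t}$ is odd, so the dihedral group $\langle s,t\rangle$ acts freely and transitively on its root system and there is a unique $z_{s,t}\in\langle s,t\rangle$ with $z_{s,t}\alpha_s=\alpha_t$; since an odd dihedral root system has no root orthogonal to a simple one, $z_{s,t}\in\mathrm{Hom}_{\mathcal{Z}}(s,t)$ and $z_{t,s}=z_{s,t}^{-1}$. The crux is to show that $\mathcal{Z}$ is the free groupoid on the graph $\mathcal{C}$ with the $z_{s,t}$ as generating arrows: a normal-form argument by induction on length shows simultaneously that every morphism is a product of the $z_{s,t}$ and that no nontrivial reduced word in the $z_{s,t}$ equals $1$ in $W$. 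Granting this, $\mathrm{Aut}_{\mathcal{Z}}(r)$ is the fundamental group of the connected graph $\mathcal{C}$, hence free of rank equal to its cycle rank $e(r)-n(r)+1$.

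The main obstacle is stage (iii): stages (i) and (ii) amount to bookkeeping with reflection subgroups and inversion sets, but both the freeness of $Y^W_{r,r}$ and the rank count hinge on the delicate normal-form analysis inside $\mathcal{Z}$ — one must establish at once that the dihedral arrows $z_{s,t}$ generate all morphisms and that they satisfy no relations beyond the groupoid axioms, which is the technical heart of \cite[\S2]{B}. A secondary point needing care, in (ii), is the assertion that $N(g)\cap\Gamma^W(r)$ is of the form $N(x)$ with $x\in W(\Gamma^W(r))$: one verifies that this finite set is closed and coclosed within $\Gamma^W(r)$ and then appeals to the description of Coxeter group elements by their biconvex inversion sets inside the reflection subgroup $W(\Gamma^W(r))$.
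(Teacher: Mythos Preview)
The paper does not prove this proposition at all: it is quoted verbatim from Brink \cite[\S2, Theorem]{B} and used as a black box in the subsequent examples, so there is no ``paper's own proof'' to compare your attempt against. Your sketch is a faithful outline of Brink's original argument---the reflection subgroup of $C_W(r)$, the complement via inversion-set conditions, and the groupoid/graph interpretation yielding the rank formula---and the obstacles you flag (closure of $Y^W_{r,r}$ under multiplication, biconvexity of $N(g)\cap\Gamma^W(r)$, and above all the normal-form analysis for freeness) are exactly the places where Brink's paper does the real work. For the purposes of the present paper, no proof is expected; the authors only need the explicit generators of $C_W(r)$ in the classical types treated in Section~3.
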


\begin{example}\label{ex:B-to-A}
Consider the Coxeter diagrams
\[
B_{1\ldots n}:  \dynkin[labels={n,n-1,,3,2,1},             scale=2,Coxeter,backwards= true] B{ooo...ooo}
\]
and
\[
A_{1\ldots n}:  \dynkin[labels={1,2,3,,n-1,n},             scale=2] A{ooo...ooo}
\]
Then
\begin{itemize}
\item the map $\phi\colon \{s_1,s_2,\dots,s_n\}\to W(A_{1\ldots n})$ defined by
$
\phi(s_i)=\begin{cases}
1&\text{if }i=1\\
t_i & \text{if }i\geq2 
\end{cases}
$
extends to a group homorphism $\phi\colon W(B_{1\ldots n})\to W(A_{1\ldots n})$;
\item the set $\mathcal{S}=\{s_1,s_2\}$ is a set of representatives for the for the equivalence relation on $\{s_1,s_2,\dots,s_n\}$ induced by the conjugation action of $W(B_{1\ldots n})$ on itself;
\item
the pair $(\phi,\mathcal{S})$ satisfies the assumptions of  Lemma \ref{lemma:to-produce-triples}.
\end{itemize}
The first statement is easily verified by checking that  the Coxeter relations are verified, and  the second statement is well-known
(see e.g. \cite{R}). To prove the third statement, notice that $\mathcal{S}_0=\{s_1\}$, so we only need to show that the set
\[
K_{s_1}=\{t \in \{t_1,\dots,t_n\}\text{ such that } \phi(C_{B}(s_1))\subseteq C_{A}(t)\},
\]
is nonempty, where we wrote $C_{B}(s_1)$ for $C_{W(B_{1\ldots n})}(s_1)$ and $C_{A}(t)$ for $C_{W(A_{1\ldots n})}(t)$; we will use similar conventions in the  following. We use Proposition \ref{prop:cenralizers} to determine $C_{B}(s_1)$. The group $Y_{s_1,s_1}^B$ is free of rank 0, so it is the trivial group. We have  
\begin{align*}\Gamma^B(s_1)=&\pm\{\alpha_h+\ldots+\alpha_k, 3\leq h\leq k\leq n\}\\&\cup\pm\{
\sum_{i=1}^na_i\alpha_i\in \Phi(B_{1,\ldots,n})\mid a_2=\sqrt{2} a_1>0\},\end{align*}
which is a system of type $B_{n-1}$. A simple system might be $\{\alpha_1+\sqrt{2} \alpha_2, \alpha_3,\ldots,\alpha_n
\}$. So $C_{B}(s_1)$ is of type $A_1\times B_{n-1}$ and, since $s_{\alpha_1+\sqrt{2} \alpha_2}=s_2s_1s_2$, we have that
\[
C_{B}(s_1)=\langle s_1,s_2s_1s_2, s_3,\dots,s_n\rangle.
\]
Therefore, $
\phi(C_{B}(s_1))=\langle \phi(s_1),\phi(s_2s_1s_2), \phi(s_3),\dots,\phi(s_n)\rangle=\langle s_3,\dots,s_n\rangle.
$
It is clear that $\Gamma^A(s_1)= \{ \pm\alpha_3,\ldots,\pm\alpha_n\}$, and that $Y^A_{s_1,s_1}$ is the trivial group (being free of rank zero). Hence 
$C_{A}(t_1)$ is the group of type $A_1\times A_{n-2}$ given by $\langle t_1,t_3,\dots,t_n\rangle$. This gives $\phi(C_{B}(s_1))\subseteq C_{A}(t_1)$ and so $t_1\in K_{s_1}$. 
\end{example}

\begin{example}
A set of representatives for the equivalence relation on $\{s_1,s_2,\dots,$ $s_{n'},s_{n''}\}$ induced by the conjugation action of $W(\widehat{B}_{1,2,\dots,n',n''})$ on itself is given by $\{s_1,s_2\}$. A set of representatives for the equivalence relation on $\{t_1,t_2,\dots,t_{n}\}$ induced by the conjugation action of $W(B_{1,2,\dots,n})$ on itself is given by $\{t_1,t_2\}$. The morphism of Coxeter systems from Example \ref{ex:hatB-to-B} maps $\{s_1,s_2\}$ to $\{t_1,t_2\}$. Hence, applying Lemma \ref{lem:composition} to Examples \ref{ex:hatB-to-B} and \ref{ex:B-to-A} we get a pair $(\phi,\mathcal{S})$ with
$
\phi\colon W(\widehat{B}_{1,2,\dots,n',n''})\to W(A_{1,\dots,n}),
$
satisfying the assumptions of  Lemma \ref{lemma:to-produce-triples}.
\end{example}

By flipping Example \ref{ex:B-to-A} we obtain the following.
\begin{example}\label{ex:C-to-A}
Consider the Coxeter diagrams
\[
C_{1\ldots n}:  \dynkin[labels={1,2,3,,n-1,n},             scale=2,Coxeter] C{ooo...ooo}
\]
and
\[
A_{1\ldots n}:  \dynkin[labels={1,2,3,,n-1,n},             scale=2] A{ooo...ooo}
\]
Then
\begin{itemize}
\item the map $\phi\colon \{s_1,s_2,\dots,s_n\}\to W(A_{1\ldots n})$ defined by
$
\phi(s_i)=\begin{cases}
1&\text{if }i=n\\
t_i & \text{if }i\leq n-1 
\end{cases}
$
extends to a group homorphism $\phi\colon W(C_{1\ldots n})\to W(A_{1\ldots n})$;
\item the set $\mathcal{S}=\{s_{n-1},s_n\}$ is a set of representatives for the for the equivalence relation on $\{s_1,s_2,\dots,s_n\}$ induced by the conjugation action of $W(C_{1\ldots n})$ on itself;
\item
the pair $(\varphi,\mathcal{S})$ satisfies the assumptions of  Lemma \ref{lemma:to-produce-triples}.
\end{itemize}
\end{example}

\begin{example}\label{CC}
Consider the Coxeter diagrams
\[
\widehat C_{1\ldots n}: \dynkin[labels={1,2,3,,n-2,n-1,n}, scale=2, extended,Coxeter]C{ooo...ooo}
\]
and
\[
C_{1\ldots n}:  \dynkin[labels={1,2,3,,n-1,n},             scale=2,Coxeter] C{ooo...ooo}
\]
Then
\begin{itemize}
\item the map $\phi\colon \{s_1,s_2,\dots,s_n\}\to W(C_{1\ldots n})$ defined by $
\phi(s_i)=\begin{cases}
1&\text{if }i=1\\
t_i & \text{if }i\geq2 
\end{cases}$
extends to a group homorphism $\varphi\colon W(\widehat{C}_{1\ldots n})\to W(C_{1\ldots n})$;
\item the set $\mathcal{S}=\{s_1,s_2,s_n\}$ is a set of representatives for the for the equivalence relation on $\{s_1,s_2,\dots,s_n\}$ induced by the conjugation action of $W(\widehat{C}_{1\ldots n})$ on itself;
\item
the pair $(\varphi,\mathcal{S})$ satisfies the assumptions of  Lemma \ref{lemma:to-produce-triples}.
\end{itemize}
The first two statements are checked as in Example \ref{ex:B-to-A}.
To prove the third statement, notice that $\mathcal{S}_0=\{s_1\}$, so we only need to show that the set
\[
K_{s_1}=\{t \in \{t_1,\dots,t_n\}\text{ such that } \phi(C_{\widehat{C}}(s_1))\subseteq C_{C}(t)\},
\]
is nonempty. We use Proposition \ref{prop:cenralizers} to determine $C_{\widehat{C}}(s_1)$. The group $Y_{s_1,s_1}^{\widehat{C}}$ is free of rank 0, and so it is the trivial group. We have  
$\Gamma^{\widehat{C}}(s_1)=\Phi(C_{3,\ldots,n}).$
Next we determine $C_{C}(t_1)$. The group $Y_{t_1,t_1}^{C}$ is free of rank 0, and so it is the trivial group. Set $\beta=\alpha_1+2(\alpha_2+\ldots+\alpha_{n-1})+\sqrt{2}\alpha_n$. We have 
$\Gamma^{C}(t_1)=\{\pm \beta\}\cup\Phi(C_{3,\ldots,n}),$
which is a root system of type $A_1\times C_{n-3}$. A simple system might be $\{\beta, \alpha_3,\ldots,\alpha_n
\}$. Since $s_{\beta}=s_2s_3\cdots s_{n-1} s_n s_{n-1} \cdots s_3 s_2 s_1 s_2 s_3 \cdots s_{n-1} s_n s_{n-1} \cdots s_2$, we have that
\begin{align*}
\phi(s_{\beta})&=t_2t_3\cdots t_{n-1} t_n t_{n-1} \cdots t_3 t_2  t_2 t_3 \cdots t_{n-1}t_n t_{n-1} \cdots t_2= 1\\
\phi(s_i)&=t_i,\quad 3\leq i\leq n.
\end{align*}
This gives $\phi(C_{\widehat{C}}(s_1))\subseteq C_{C}(t_1)$ and so $t_1\in K_{s_1}$.
\end{example}

\begin{example}
Applying lemma \ref{lem:composition} to Examples \ref{CC} and \ref{ex:C-to-A} we get a pair $(\phi,\mathcal{S})$ with
$
\phi\colon W(\widehat{C}_{1,2,\dots,n,n'})\to W(A_{1,\dots,n}),
$
satisfying the assumptions of  Lemma \ref{lemma:to-produce-triples}.
\end{example}

\section{Braids and ties algebras}
Given a Juyumaya map $(\phi,e)$ between the Coxeter systems $X=(W,S)$ and $Y=(U,T)$, we see that the algebra $\mathcal{E}^{(W,S)}(R[\Sigma(\Phi_U)_{[e]}],\rho_\phi,\hat{e})$ is generated by elements $g_s$ with $s\in S$ and $w. e_s$ with $w\in W$. When  $X$ and $Y$ are of $A$-, $B$-, $C$- or $D$-type, we can interpret the elements $g_s$ as braids and the elements $e_s$ as ties. Consequently the algebra $\mathcal{E}^{(W,S)}(R[\Sigma(\Phi_U)_{[e]}],\rho_\phi,\hat{e})$ will be called a \emph{braids and ties algebra of type $X$-$Y$}. When $Y=A$ we will simply say braids and ties algebra of type $X$.  Somewhat  abusing terminology, for the braids and ties algebras arising from the examples in Section 3, with the exception of the terminal morphism from Example \ref{ex:terminal}, we will say ``the'' braids and ties algebra of type $X$, to mean the distinguished braids and ties algebra of type $X$ arising from those examples.
\begin{example}[The Hecke algebra ``doubled"] This is the case of the terminal morphisms from Example \ref{ex:terminal}. \par If $X=X_{i_1,\dots,i_n}$, the algebra $\mathcal{E}^{(W,S)}(R[\Sigma(\Phi_U)_{[e]}],\rho_\phi,\hat{e})$ in this case is generated by the elements $g_{s_i}$ with $i\in \{i_1,\dots,i_n\}$ and by the single element $e_{s_{i_1}}$  (since we have $e_{s_i}=e_{s_j}$ for any $i,j\in \{i_1,\dots,i_n\}$). Writing $g_i$ for $g_{s_i}$ and $e$ for $e_{s_1}$, the relations defining the algebra are
\begin{align*}
\underbrace{g_{s_i}g_{s_j}g_{s_i} \cdots}_\text{$m_{i,j}$ times}&=\underbrace{g_{s_j}g_{s_i}g_{s_j} \cdots}_\text{$m_{i,j}$ times}\\
 {g_i}^2 &= 1+(u_i-1)e(1-g_i)\\
g_i e &=e g_{i}\\
e^2&=e,
\end{align*}
where $u\colon \{i_1,\dots,i_n\}\to R^\times$ is constant over $W(X_{i_1,\dots,i_n})$-orbits in $\{i_1,\dots,i_n\}$.
For instance, when $X$ is the Coxeter diagram
$
G_{1,2}: \dynkin [Coxeter,gonality=6]G2
$
we get the algebra over three generators $g_1,g_2,e$ with the relations
\begin{align*}
&g_1g_2g_1g_2g_1g_2=g_2g_1g_2g_1g_2g_1\\
 &{g_1}^2 = 1+(u-1)e(1-g_1),\quad
 {g_2}^2 = 1+(u-1)e(1-g_2)\\
&g_1 e =e g_1,\quad
g_2e=eg_2\\
&e^2=e,
\end{align*}
which has manifestly $\{\epsilon\, w\mid w\in W(G_{1,2}),\,\epsilon\in\{1, e\}\}$ as basis over the ground field.
\end{example}
\begin{example}[Marin's $\mathcal C_W$-algebras] We recover here the  Example \ref{ex:marin-roots}. It indeed corresponds to the general identity morphism from Example \ref{ex:A-to-A}. If $X=(W,S)$  is a Coxeter system and $e: S\to \Sigma(\Phi_W) $, as in \eqref{e},  is the map 
$e(s)=\langle s \rangle$, then 
$\mathcal{E}^{(W,S)}(R[\Sigma(\Phi_W)_{[e]}], \rho_{Id},\hat{e})$ is the algebra  $\mathcal C_W$. In particular, when $R=k$ then $\mathrm{rk}_k R[\Sigma(\Phi_U)_{[e]}]$ is the number of reflection subgroups of $W$, which has been called {\it Bell number} of $W$ in \cite{marin}. Hence, formula \eqref{rk} specializes to Marin's rank formula from \cite[Theorem 3.4]{marin}: $\mathrm{rk}_k \mathcal C_W=Bell(W)|W|$.

\end{example}
\begin{example}[The type $A$ braids and ties algebra]\label{ex:bt-a-to-A} This is  a special case of the previous example, when $W$ is of type $A_{n-1}$.  The algebra $\mathcal{E}^{(W,S)}(R[\Sigma(\Phi_U)_{[e]}],\rho_\phi,\hat{e})$ in this case is generated by the elements $g_{s_i}$ with $i\in \{1,\dots,n-1\}$ and by the elements $w. e_{s_i}$ with $w\in W(A_{1,\dots,n-1})\simeq S_{n}$. Let us write $g_i$ for $g_{s_i}$ and $e_i$ for $e_{s_i}$. 

In \cite{AY} by Aicardi an Juyumaya introduced the (type $A$) {\it algebra of braids and ties}. This is the algebra $\mathcal E_n$ generated by the elements $\{g_1,\ldots,g_{n-1},e_1,
\ldots,e_{n-1}\}$ subject to the following relations 
\begin{align}
\label{aa1} g_ig_j&=g_jg_i \quad\text{if}\quad |i-j|>1\\
\label{aa2}g_{i}g_{j}g_{i}&=g_{j}g_{i}g_{j} \quad\text{if}\quad |i-j|=1 \\
\label{aa3} {g_i}^2 &= 1+(u-1)e_i(1-g_i)\\
\label{aa33}g_ie_j&=e_jg_i\qquad \text{if}\quad |i- j|\neq 1\\
\label{aa4} g_jg_ie_j&=e_ig_jg_i \qquad \text{if}\quad |i-j|= 1\\
\label{aa5}e_ie_j&=e_je_i\\
\label{aa6}{e_i}^2&=e_i\\
\label{aa7} e_ie_jg_j&=e_ig_je_i=g_je_ie_j\qquad \text{if}\quad |i-j|= 1
\end{align}
The algebra $\mathcal E_n$ has the following useful and important diagrammatic realization, proved in \cite{AY}. Consider the algebra generated by concatenation\footnote{$AB$ means $A$ on top of $B$.} of the following elementary diagrams on $n$ strands: $1$ is represented by 
 \newcount\beziercnt \beziercnt = 100   

 \newcount\grcalca
 \newcount\grcalcb
 \newcount\grcalcc
 \newcount\grcalcd
 \newcount\mordiam
 \newcount\radius
 \newcount\parametr
 \newcount\breit
\newcount\width

 \mordiam = 8   
 \radius = 3    
 \parametr = 0  
 \newcommand\thlines{\thinlines} 

 \newcommand{\braid}{
  \divide \parametr by -3
  \advance \parametr by 10                                 
   \thlines                                                             
   \bezier{\beziercnt}(\parametr,10)(10,7.5)(7,6)
   \bezier{\beziercnt}(0,0)(0,2.5)(3,4)                 
   \bezier{\beziercnt}(0,10)(0,7.5)(5,5)               
 \bezier{\beziercnt}(\parametr,0)(10,2.5)(5,5)
            \parametr = 0 }                       

\newcommand{\zero}[2]{
                    \grcalca = #1
                    \put(\grcalca,#2){\line(0,1){10}}
                   \advance \grcalca by 10
                    \put(\grcalca,#2){\line(0,1){10}}
                    }

\newcommand{\zerr}[2]{
                    \grcalca = #1
                    \advance \grcalca by 10
                    \put(\grcalca,#2){\line(0,1){10}}
                    }

\newcommand{\zerl}[2]{
                    \grcalca = #1
                    \put(\grcalca,#2){\line(0,1){10}}
                   }

\newcommand{\ei}[2]{                    \grcalca = #1
                    \grcalcb = #2
                     \put(\grcalca,\grcalcb){\line(0,1){10}}

                    \advance \grcalca by 10
                    \advance \grcalcb by 5

                    \put(\grcalca,#2){\line(0,1){10}}
                    \put(#1,\grcalcb){\dashbox{1}(10,0)}
                    }

\newcommand{\pzero}[2]{
                    \grcalca = #1
                    \put(\grcalca,#2){\line(0,1){2}}
                   \advance \grcalca by 10
                    \put(\grcalca,#2){\line(0,1){2}}
                    }

\newcommand{\pzerr}[2]{
                    \grcalca = #1
                    \advance \grcalca by 10
                    \put(\grcalca,#2){\line(0,1){2}}
                    }

\newcommand{\pzerl}[2]{
                    \grcalca = #1
                    \put(\grcalca,#2){\line(0,1){2}}
                   }

\newcommand{\pei}[2]{                    \grcalca = #1
                    \grcalcb = #2
                     \put(\grcalca,\grcalcb){\line(0,1){2}}

                    \advance \grcalca by 10
                    \advance \grcalcb by 1

                    \put(\grcalca,#2){\line(0,1){2}}
                    \put(#1,\grcalcb){\dashbox{1}(10,0)}
                    }

 \newcommand{\ibraid}{
 \divide \parametr by -3
  \advance \parametr by 10                         
   \thlines                                        
   \bezier{\beziercnt}(\parametr,10)(10,7.5)(5,5)  
   \bezier{\beziercnt}(0,0)(0,2.5)(5,5)            
   \bezier{\beziercnt}(0,10)(0,7.5)(3,6)           
   \bezier{\beziercnt}(\parametr,0)(10,2.5)(7,4)
     \parametr = 0 }                               

\newcommand{\sibrmor}{\brmor}
\newcommand{\isibrmor}{\ibrmor}
\newcommand{\simor}{\brmor}
\newcommand{\isimor}{\ibrmor}


\newcommand{\nubrmor}{
\thlines                                 
   \bezier{\beziercnt}(9.8,10)(10,8.5)(6,6)  
   \bezier{\beziercnt}(0,0)(0,1.5)(4,4)     
   \bezier{\beziercnt}(0,10)(0,8.5)(3,7)    
   \bezier{\beziercnt}(9.8,0)(10,1.5)(7,3)
\bezier{\beziercnt}(4,4)(4,4)(6,6)
\bezier{\beziercnt}(3,7)(7,7)(7,3)\bezier{\beziercnt}(3,7)(3,3)(7,3)}

\newcommand{\inubrmor}{ 
\thlines
  \bezier{\beziercnt}(4,6)(4,6)(6,4)
   \bezier{\beziercnt}(9.8,10)(10,8.5)(7,7)  
   \bezier{\beziercnt}(0,0)(0,1.5)(3,3)     
   \bezier{\beziercnt}(0,10)(0,8.5)(4,6)    
   \bezier{\beziercnt}(9.8,0)(10,1.5)(6,4)
\bezier{\beziercnt}(3,3)(3,7)(7,7)
\bezier{\beziercnt}(3,3)(7,3)(7,7)}

\newcommand{\rmor}[1]{
\put(0,10){\twist{2}{-4}}
\put(10,10){\twist{-2}{-4}}
\put(5,4){\circle{8}}
\put(5,4){\makebox(0,0){$#1$}}}

 \newcommand{\dualmor}{                      
 \thlines                                    %
 \put(-8,5){\oval(6,6)[l]}                   
 \bezier{\beziercnt}(-.2,10)(0,8)(-4,4)       %
 \bezier{\beziercnt}(-4,4)(-6,2)(-8,2)
\bezier{\beziercnt}(-8,8)(-6,8)(-5,7)        %
 \bezier{\beziercnt}(-1,3)(0,2)(-.2,0)}     %

\newcommand{\dualmors}{                      
 \thlines                                    %
 \put(-8,5){\oval(6,6)[l]}                   
 \bezier{\beziercnt}(-.2,0)(0,2)(-4,6)       %
 \bezier{\beziercnt}(-4,6)(-6,8)(-8,8)
\bezier{\beziercnt}(-8,2)(-6,2)(-5,3)        %
 \bezier{\beziercnt}(-1,7)(0,8)(-.2,10)}     %

 \newcommand{\idualmor}{                      
 \thlines                                     %
 \put(8,5){\oval(6,6)[r]}                     
 \bezier{\beziercnt}(0,10)(0,8)(4,4)           %
 \bezier{\beziercnt}(4,4)(6,2)(8,2)
\bezier{\beziercnt}(8,8)(6,8)(5,7)            %
 \bezier{\beziercnt}(1,3)(0,2)(0,0)}         %

 \newcommand{\idualmors}{                      
 \thlines                                     %
 \put(8,5){\oval(6,6)[r]}                     
 \bezier{\beziercnt}(0,0)(0,2)(4,6)           %
 \bezier{\beziercnt}(4,6)(6,8)(8,8)
\bezier{\beziercnt}(8,2)(6,2)(5,3)            %
 \bezier{\beziercnt}(1,7)(0,8)(0,10)}         %

\newcommand{\dualbrmors}{\dualmors\put(-3,5){\circle{6}}}
\newcommand{\idualbrmors}{\idualmors\put(3,5){\circle{6}}}
\newcommand{\dualbrmor}{\dualmor\put(-3,5){\circle{6}}}
\newcommand{\idualbrmor}{\idualmor\put(3,5){\circle{6}}}

 \newcommand{\mor}[2]{
   \thlines
   \grcalca = #2
   \advance \grcalca by -\radius
 \advance \grcalca by -\radius
   \divide \grcalca by 2                               
   \put(0,#2) {\line(0,-1){\grcalca}}             
   \put(0,0) {\line(0,1){\grcalca}}                
   \grcalca = #2                                        
   \divide \grcalca by 2                           
   \put(0,\grcalca) {\circle{\mordiam}}     
   \put(0,\grcalca) {\makebox(0,0){$#1$}} } 

\newcommand{\umor}[2]{                          %
 \thlines                                                         
 \grcalca = #2                                                
\advance \grcalca by -\radius
 \advance \grcalca by -\radius
\put(0,\grcalca) {\line(0,-1){\grcalca}}       
\advance \grcalca by \radius
\put(0,\grcalca) {\circle{\mordiam}}
\put(0,\grcalca) {\makebox(0,0){$#1$}} }        
\newcommand{\coumor}[2]{                          %
 \thlines                                                          
 \grcalca = #2                                                  
     \advance \grcalca by -\radius
 \advance \grcalca by -\radius
\put(0,6) {\line(0,1){\grcalca}}      
\put(0,\radius) {\circle{\mordiam}}             
\put(0,\radius) {\makebox(0,0){$#1$}} }         

  \newcommand{\multmor}[3]{                    
    \thlines                                   
    \grcalca = #2
    \grcalcb = #3                              
    \advance \grcalca by 5                     
    \put(-2.5,0){\framebox(\grcalca,\grcalcb)  
     {$#1$}}}

 \newcommand{\twist}[2]{                     
   \thlines                                               
   \grcalca = #1                                      
   \divide \grcalca by 2                          
   \grcalcb = #2                                      
   \divide \grcalcb by 2                          
   \grcalcc = #2                                      
   \divide \grcalcc by 3                          
   \bezier{\beziercnt}(0,0)(0,\grcalcc)(\grcalca,\grcalcb)
   \grcalcd = #2
   \advance \grcalcd by -\grcalcc
   \multiply \grcalcc by 3
   \bezier{\beziercnt}(\grcalca,\grcalcb)(#1,\grcalcd)(#1,#2) }

 \newcommand{\idgr}[1]{                     
   \thlines                                 
   \put(0,0) {\line(0,1){#1}}}              

  \newcommand{\rigco}[1]{                   
    \thlines                                
    \put(0,0) {\line(1,0){#1}}              
    \put(#1,0) {\line(-5,2){#1}}}           

\newcommand{\ig}[1]{\idgr{#1}}

  \newcommand{\lefco}[1]{                   
    \thlines                                
    \put(#1,0) {\line(0,1){5}}              
    \put(0,0) {\line(1,0){#1}}              
    \put(0,0) {\line(5,2){#1}}}             

  \newcommand{\rigmu}[1]{                   
    \thlines                                
    \put(0,0) {\line(0,1){5}}               
    \put(0,5) {\line(1,0){#1}}              
    \put(#1,5) {\line(-5,-2){#1}}}          

  \newcommand{\lefmu}[1]{                   
    \thlines                                
    \put(#1,0) {\line(0,1){5}}              
    \put(0,5) {\line(1,0){#1}}              
    \put(0,5) {\line(5,-2){#1}}}            

\newcommand{\objo}[1]{
   \put(0,3){\makebox(0,0)[b]{\mbox{$#1$}}}}

 \newcommand{\oo}{\objo}

 \newcommand{\obju}[1]{
   \put(0,-5){\makebox(0,0)[b]{\mbox{$#1$}}}} 
\newcommand{\ou}{\obju}

\newcommand{\bgr}[3]{
   \unitlength=#1 
\divide \unitlength by 2
\advance  \unitlength by #1
 \grcalca = #3
   \advance \grcalca by 6
 \breit = #2
\advance \breit  by 5
   \begin{picture}(#2,\grcalca)}

  \def\eeee{\put(\breit,0){\obju{\mbox{\normalsize\rule{0.75ex}{1.5ex}}}}}
\thlines

\newcommand{\egr}{\end{picture}}

\newcommand{\fbgr}[2]{\scriptsize\bgr{.4ex}{#1}{#2}}
\newcommand{\fegr}{\egr\nml
}
\newcommand{\cbgr}[2]{\scriptsize\begin{center}\bgr{.4ex}{#1}{#2}}
\newcommand{\cegr}{\egr\end{center}\nml\vspace{1ex}
}

 \newcommand{\bsgr}[1]{
   \newsavebox{#1}
   \savebox{#1}(0,0)[bl]}

\setlength\unitlength{0.4ex}
\begin{picture}(100,35)(-40,5)

\bgr{0.4ex}{100}{20}

\zerl{10}{10}
\zerl{20}{10}
\zerl{30}{10}
\zerl{50}{10}
\zero{60}{10}
\put(38,15){$\ldots$}
\scriptsize
\put(10,21){$1$}
\put(20,21){$2$}

\put(30,21){$3$}
\put(48,21){$n-2$}
\put(58,21){$n-1$}
\put(70,21){$n$}
\egr
\end{picture}

\noindent $g_i$ is represented by a elementary {\it braid}

\begin{picture}(100,35)(-40,5)

\bgr{0.4ex}{100}{20}

\zero{10}{10}
\put(40,10)\ibraid

\zerl{70}{10}
\put(28,15){$\ldots$}
\put(58,15){$\ldots$}

\scriptsize

\put(10,21){$1$}
\put(20,21){$2$}
\put(40,21){$i$}
\put(48,21){$i+1$}
\put(70,21){$n$}
\egr
\end{picture}

\noindent $e_i$ is represented by a elementary {\it tie}

\begin{picture}(100,35)(-40,5)

\bgr{0.4ex}{100}{20}

\zero{10}{10}
\ei{40}{10}

\zerl{70}{10}
\put(28,15){$\ldots$}
\put(58,15){$\ldots$}

\scriptsize
\put(10,21){$1$}
\put(20,21){$2$}
\put(40,21){$i$}
\put(48,21){$i+1$}
\put(70,21){$n$}
\egr
\end{picture}

The resulting diagrams are considered up to planar isotopy; moreover, the endpoints of the ties can freely move along the strand and through the ties themselves.
Our general constructions recovers this algebra, as proved in the following
\begin{proposition}
    The algebra $\mathcal{E}^{(W,S)}(R[\Sigma(\Phi_U)_{[e]}],\rho_\phi,\hat{e})$ coincides with $\mathcal E_n$.
\end{proposition}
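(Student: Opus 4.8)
The plan is to produce two mutually inverse $R$-algebra homomorphisms between $\mathcal E_n$ and the algebra $\mathcal{E}^{(W,S)}(R[\Sigma(\Phi_U)_{[e]}],\rho_\phi,\hat e)$, which in the situation at hand is the algebra $\mathcal C_{S_n}$ attached to the identity morphism of $A_{n-1}$ (Example \ref{ex:A-to-A}), so that $U=W=S_n$, $T=S=\{s_1,\dots,s_{n-1}\}$, $e_i=e_{s_i}=\langle\alpha_i\rangle$, and, since the Coxeter graph of type $A$ is connected with all labels odd, $u_{s_i}=u$ is a single parameter. First I would make the monoid $\Sigma(\Phi_W)_{[e]}$ explicit: for $A_{n-1}$ every root subsystem is generated by its rank-one subsystems $\langle e_a-e_b\rangle$, so $\Sigma(\Phi_W)_{[e]}=\Sigma(\Phi_W)$, and the latter is isomorphic, as a monoid, to the monoid of set partitions of $\{1,\dots,n\}$ under join (a subsystem $\Psi$ corresponds to the partition whose blocks are the connected components of the graph with an edge $\{a,b\}$ whenever $e_a-e_b\in\Psi$). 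Thus $R[\Sigma(\Phi_W)]$ is free over $R$ on the set partitions, and by Example \ref{ex:marin-roots} and formula \eqref{rk} the algebra $\mathcal{E}^{(W,S)}(R[\Sigma(\Phi_W)],\rho,\hat e)$ is free over $R$ of rank $n!\,B_n$, where $B_n$ is the Bell number.

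I would then build $\Psi\colon \mathcal E_n\to \mathcal{E}^{(W,S)}(R[\Sigma(\Phi_W)],\rho,\hat e)$ by $g_i\mapsto g_{s_i}$, $e_i\mapsto\langle\alpha_i\rangle$, verifying the relations \eqref{aa1}--\eqref{aa7} in the target. Relations \eqref{aa1}--\eqref{aa2} are the braid relations of the type-$A$ Coxeter matrix; \eqref{aa5}--\eqref{aa6} hold because every element of $\Sigma(\Phi_W)$ is a commuting idempotent; \eqref{aa3} is obtained from $g_{s_i}^2=\hat e_{s_i}+(1-\hat e_{s_i})g_{s_i}$ after substituting $\hat e_{s_i}=1+(u-1)\langle\alpha_i\rangle$ from \eqref{he}; \eqref{aa33} follows from $g_{s_i}c=(s_i.c)g_{s_i}$ together with $s_i.\alpha_j=\alpha_j$ when $|i-j|\neq1$ (note $s_i.\alpha_i=-\alpha_i$, so $g_{s_i}\langle\alpha_i\rangle=\langle\alpha_i\rangle g_{s_i}$ too); and \eqref{aa4} and \eqref{aa7}, for $|i-j|=1$, follow from $g_{s}c=(s.c)g_{s}$ using the type-$A_2$ identities $s_i.\alpha_j=\alpha_i+\alpha_j$, $s_j.(\alpha_i+\alpha_j)=\alpha_i$, $s_j.\alpha_j=-\alpha_j$ together with the monoid identity $\langle\alpha_i,\alpha_i+\alpha_j\rangle=\langle\alpha_i,\alpha_j\rangle$ (a special case of the general fact $\langle s,\beta\rangle=\langle s,s.\beta\rangle$ used in Example \ref{ex:marin-roots}). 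Finally, $\Psi$ is surjective: by Remark \ref{rem:presentation} the target is generated by $R[\Sigma(\Phi_W)]$ and the $g_{s_i}$, and $R[\Sigma(\Phi_W)]$ is generated as an $R$-algebra by the $\langle\beta\rangle=\langle w.\alpha_i\rangle=g_w\langle\alpha_i\rangle g_w^{-1}$, which lie in the image.

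For the inverse I would invoke the presentation of the Marin algebra from Remark \ref{rem:presentation}: giving a homomorphism $\Phi\colon \mathcal{E}^{(W,S)}(R[\Sigma(\Phi_W)],\rho,\hat e)\to\mathcal E_n$ amounts to giving an $R$-algebra map $\Phi_A\colon R[\Sigma(\Phi_W)]\to\mathcal E_n$ and elements $r_i\in\mathcal E_n$ with $r_i\Phi_A(c)=\Phi_A(s_i.c)r_i$, the braid relations, and $r_i^2=\Phi_A(\hat e_{s_i})+\Phi_A(1-\hat e_{s_i})r_i$. One takes $r_i=g_i$; the third relation is again \eqref{aa3}, the braid relations are \eqref{aa1}--\eqref{aa2}, and the first relation reduces, on the monoid generators, to $g_ie_{ab}g_i^{-1}=e_{s_i(a)s_i(b)}$, where $e_{ab}:=g_we_ig_w^{-1}$ for any $w\in S_n$ with $\{w(i),w(i+1)\}=\{a,b\}$ and $\Phi_A$ sends the one-block partition $\{a,b\}$ to $e_{ab}$. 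The main obstacle is precisely the well-definedness of $\Phi_A$: one must check that $e_{ab}$ is independent of $w$ and that the $e_{ab}$ are pairwise commuting idempotents obeying the partition-monoid relations (so that $E_\pi:=\prod_{a\sim_\pi b}e_{ab}$ is well defined and $E_\pi E_{\pi'}=E_{\pi\vee\pi'}$); this is the calculus of conjugated ties carried out in \cite{AY} (see also \cite{RH}, \cite{marin}), resting on relations \eqref{aa4} and \eqref{aa7}, which I would quote rather than redo. Once $\Phi$ is in hand, $\Phi$ and $\Psi$ are mutually inverse since they are so on the generators $g_i,e_i$. Alternatively, one may bypass the construction of $\Phi$ altogether: $\Psi$ is a surjection of free $R$-modules, and since $\mathcal E_n$ is itself free over $R$ of rank $n!\,B_n$ (a surjection between free modules of equal finite rank is an isomorphism, as a surjective endomorphism of a finitely generated module over a commutative ring is injective), this already yields the claim.
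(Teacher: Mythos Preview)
Your proposal is correct, and the primary route (building $\Psi\colon\mathcal E_n\to\mathcal{E}^{(W,S)}(R[\Sigma(\Phi_W)],\rho,\hat e)$ by verifying \eqref{aa1}--\eqref{aa7} on the images of $g_i,e_i$, then producing the inverse by appealing to the tie calculus of \cite{AY,RH,marin}) is essentially the paper's own argument. The paper organises it as showing that the two presentations are equivalent: first that the relations \eqref{a1}--\eqref{a7} of the Marin algebra force \eqref{aa1}--\eqref{aa7} (your $\Psi$ direction, after observing via Remark~\ref{rem:invertibility} that the $w.e_j$ lie in the subalgebra generated by the $g_i,e_i$), and then that \eqref{aa1}--\eqref{aa7} force \eqref{a4}--\eqref{a7}, the latter step carried out via the diagrammatic realisation of $\mathcal E_n$ in \cite{AY} (identifying $w.e_i$ with the tie $e_{w(i),w(i+1)}$ and checking \eqref{read}--\eqref{reads} pictorially). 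Your verification of \eqref{aa4} and \eqref{aa7} via the root-system identity $\langle\alpha_i,\alpha_i+\alpha_j\rangle=\langle\alpha_i,\alpha_j\rangle$ is slightly more streamlined than the paper's use of Remark~\ref{rem:commutes}, but the content is the same.

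Your alternative closing argument, however, is genuinely different from the paper's and worth noting. Once $\Psi$ is known to be surjective, the identification $\Sigma(\Phi_{A_{n-1}})\cong\{\text{set partitions of }[n]\}$ together with formula~\eqref{rk} gives $\mathrm{rk}_R\,\mathcal{E}^{(W,S)}(R[\Sigma(\Phi_W)],\rho,\hat e)=n!\,B_n$, and since $\mathcal E_n$ is itself $R$-free of the same rank (this is \cite{RH}, see also \cite{marin}), the surjection must be an isomorphism. This bypasses the construction of $\Phi$ and hence the diagrammatic verification entirely; the price is importing the freeness of $\mathcal E_n$ as an external input, which is itself a substantial result. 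The paper's route is more self-contained on that front but pays for it with the case analysis of \eqref{read}--\eqref{reads}.
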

 A (sketchy) proof of this statement can be found in \cite{marin}. We provide a detailed treatment, which can be taken as a basis for other instances of braids and type algebras admitting a geometric realization.
\begin{proof}
The relations defining $\mathcal{E}^{(W,S)}(R[\Sigma(\Phi_U)_{[e]}],\rho_\phi,\hat{e})$ are 
\begin{align}
\label{a1} g_ig_j&=g_jg_i \quad\text{if}\quad |i-j|>1\\
\label{a2} g_{i}g_{j}g_{i}&=g_{j}g_{i}g_{j} \quad\text{if}\quad |i-j|=1 \\
\label{a3}{g_i}^2 &= 1+(u-1)e_i(1-g_i)\\
\label{a4}g_i (w. e_j) &=(s_iw. e_j)g_{i}, \qquad\forall w\in W(A_{1,\dots,n-1})\\
\label{a5}(w_1. e_i)(w_2. e_j)&=(w_2. e_j)(w_1. e_i), \qquad\forall w_1, w_2\in W(A_{1,\dots,n-1})\\
\label{a6}{(w. e_i)}^2&=w. e_i , \qquad\forall w\in W(A_{1,\dots,n-1})\\
\label{a7} e_i (w. e_j)  &=e_i ((s_iw). e_j)\quad \qquad\forall w\in W(A_{1,\dots,n-1}),
\end{align}
where the fact that all of the $s_i$'s are in the same $W$-orbit implies that the datum of the $W$-equivariant map $u\colon\{s_1,\dots,s_n\}\to R^\times$ reduces to that of the choice of an element $u$ in $R^\times$.
By Remark \ref{rem:invertibility} and Example \ref{example:monoid}, equation ${g_i}^2 = 1+(u_i-1)e_i(1-g_i)$ implies that $g_i$ is invertible, with inverse given by a polynomial expression in $g_i$ and $e_i$. So we can rewrite equation $g_i (w. e_j) =(s_iw. e_j)g_{i}$ as
\[
s_iw. e_j=g_i (w. e_j){g_i}^{-1}
\]
and argue by induction on the lenght of $w$ that all the elements $w. e_i$ lie in the subalgebra generated by the $g_j$'s and the $e_j$'s. Therefore $\{g_1,\ldots,g_{n-1},e_1,
\ldots,e_{n-1}\}$ is a set of generators for the algebra $\mathcal{E}^{(W,S)}(R[\Sigma(\Phi_U)_{[e]}],\rho_\phi,\hat{e})$ in this case.
Taking $w=1$, relation \eqref{a4} gives 
\begin{equation}\label{eeee}
g_ie_j=(s_i. e_j)g_i.
\end{equation}
  Since we have
\begin{equation}\label{eq:either-or}
 s_i. e_j=\begin{cases}
     e_j&\text{if}\quad |i-j|\neq 1\\
     s_j. e_i&\text{if}\quad |i-j|= 1,
 \end{cases}   
\end{equation}
equation
\eqref{eeee}  reduces to 
$
g_ie_j=e_jg_i\qquad \text{if}\quad |i- j|\neq 1,$
which is relation \eqref{aa33}.

Moreover, if $|i-j|=1$,  using Remark \ref{rem:commutes}, we have 
\begin{align*}
g_i e_j=(s_i. e_j)g_{i}&=(s_j. e_i)g_{i}
={g_j}e_ig_{j}^{-1}g_{i}
={g_j}^{-1}e_ig_jg_{i},
\end{align*}
i.e.,
$
g_jg_ie_j=e_ig_jg_i \qquad \text{if}\quad |i-j|= 1,
$
which is relation \eqref{aa4}.\par
Taking $w=w_1=w_2=1$, relation \eqref{a6} becomes 
$
{e_i}^2=e_i,
$
which is relation \eqref{aa6}, and relation \eqref{a5} becomes
$
e_ie_j=e_je_i.  
$
which is relation \eqref{aa5}. \par Finally, taking $w=1$ relation \eqref{a7} gives
$
e_i e_j  =e_i (s_i. e_j).$
This is trivially true if $|i-j|\neq 1$, while for $|i-j|=1$, by \eqref{eq:either-or} and \eqref{a5}, it becomes
\[
e_i e_j=e_i(s_j. e_i)=(s_j. e_i)e_i,
\]
i.e.,
$
e_ie_j=e_ig_je_i{g_j}^{-1}
$
and
$
e_ie_j=g_je_i{g_j}^{-1}e_i={g_j}^{-1}e_i{g_j}e_i,
$
i.e., since $e_ie_j=e_je_i$, 
$
e_je_ig_j=e_ig_je_i=g_je_ie_j.
$
Hence relation \eqref{aa7} holds and  since relations \eqref{a1}-\eqref{a3} and \eqref{aa1}-\eqref{aa3} are identical, $\mathcal{E}^{(W,S)}(R[\Sigma(\Phi_U)_{[e]}],\rho_\phi,\hat{e})$  satisfies all the defining relations of $\mathcal E_n$.
Also the converse holds. 
To get \eqref{a4}, notice that it is equivalent to $g_i(w. e_j) g_i^{-1}=(s_iw. e_j)$, which in turn is equivalent to $s_i.(w. e_j)=(s_iw. e_j)$, which clearly holds. Since $w.-$ is an action on a monoid, \eqref{aa6} implies ${(w. e_i)}^2=w. e_i$ for any $w$, i.e., \eqref{a6} holds. By the same reason, equation \eqref{a5}  is equivalent to \begin{equation}\label{daprovare}
e_i(w. e_j)=(w. e_j)e_i,\qquad \forall w\in W(A_{1,\dots,n-1}).\end{equation}
To prove \eqref{daprovare} and, finally, \eqref{a7}, we  resort  to the diagrammatic encoding of $\mathcal E_n$. Denote by $e_{ij}$ the diagram with a tie joining strands $i$ and $j$. The key remark is that the $W$-action on ties can be interpreted as the permutation action of the endpoints of the ties: 
the diagram of $w. e_i$ is $e_{w(i), w(i+1)}$.

Relations \eqref{daprovare},  \eqref{a7} in $\mathcal E_n$ read, respectively    \begin{align}\label{read}e_{i,i+1}e_{w(j), w(j+1)}&=e_{w(j), w(j+1)}e_{i,i+1},\\
\label{reads}e_{i,i+1}e_{w(j), w(j+1)}&=e_{i,i+1}e_{s_iw(j), s_iw(j+1)}.\end{align} 
The are several cases to analyze. In the following table we display the values of $w(j), w(j+1)$ and the corresponding  values of  $s_iw(j), s_iw(j+1)$; by $*$ we mean an integer in the range $1, \ldots, n$ different from $i,i+1$.
\begin{center}
\begin{tabular}{ c |  c | c | c | c | c | c}
 & $1$  & $2$  & $3$  & $4$  & $5$  & $6$ \\\hline
 $w(j)$ &  $i$ & $i+1$ & $i$ & $i+1$ & $*$ & $*$  \\\hline
  $w(j+1)$ & $*$ & $*$ & $i+1$ & $i$ & $i$ & $i+1$ \\\hline
   $s_iw(j)$ &$i+1$ & $i$ & $i+1$ & $i$ & $*$ & $*$  \\\hline
    $s_iw(j+1)$ & $*$ & $*$ & $i$ & $i+1$ & $i+1$ & $i$ 
\end{tabular}
\end{center}
Relation \eqref{read} in cases 3,4 amounts to \eqref{aa5}, where in the other cases follows from the fact that the endpoints move freely on the strands (which easily follows from \eqref{a4}).
Relation \eqref{reads} is clear in cases 3,4, and in cases 1, 2 (and by the same reason also in cases 5,6) corresponds to the following diagrammatic equality between strands 
$i,i+1,*$
\vskip5pt
\begin{picture}(100,25)(-55,5)

\bgr{0.4ex}{10}{18}

\ei{10}{0}

\ei {10}{10}

\zerl{30}{10}

\zerl{40}{0}
\zerl{60}{10}
\ei {20}{0}

\ei {40}{10}

\ei{50}{0}


 \egr
\end{picture}

\end{proof}
\end{example}
\begin{remark}\label{2p}  In \cite{AY4} the authors constructed a two-parameters braids and ties algebra $\mathcal E_n(u,v)$, which specializes to $\mathcal E_n$, and which is relevant for applications to knot theory. We can recover this algebra starting from the generalized Marin ring described in Remark \ref{2var} by setting:
$$b_s=1+(u-1)e_s,\quad c_s=1-(v-1)e_s.$$  With this specialization relation \eqref{H2} (which replaces \eqref{a3}) becomes
\begin{equation}\label{newq}g_s^2=1+(u-1)e_s+(v-1)e_sg_s,\end{equation}which is formula (20) from \cite{AY4}.\par
To obtain  $\mathcal E_n$  from $\mathcal E_n(u,v)$, one looks for rescalings $g_s\rightsquigarrow  \tilde g_s=\lambda_s g_s$ which turn \eqref{newq} to \eqref{a3}. The equation for $\lambda_s$ becomes
\[
(1+(u-1)e_s)\lambda_s^2 -(v-1)e_s\lambda_s-1=0.
\]
Now we use the idempotency of $e_s$ and write
\[
\xi_s=\lambda_se_s; \qquad z_s=\lambda_s(1-e_s),
\]
so that
$
\lambda_s=\xi_s+z_s.
$
By the commutativity of the algebra we have $\xi_sz_s=z_s\xi_s=0$ and so
$
\lambda_s^2=\xi_s^2+z_s^2.
$
The quadratic equation for $\lambda_s$ becomes
\[
(1+(u-1)e_s)(\xi_s^2+z_s^2) -(v-1)e_s(\xi_s+z_s)-1=0,
\]
and so, recalling that $e_s(1-e_s)=0$, it becomes the equation
\[
(1+(u-1)e_s)(\xi_s^2+z_s^2) -(v-1)e_s\xi_s-1=0.
\]
By multiplying it by $e_s$ and by $1-e_s$, this is in turn equivalent to the pair of equations
\[
\begin{cases}
(u\xi_s^2 -(v-1)\xi_s-1)e_s=0\\
(z_s^2 -1)(1-e_s)=0.
\end{cases}
\]
Let us consider the system of equations
\[
\begin{cases}
u\eta_s^2 -(v-1)\eta_s-1=0\\
\zeta_s^2 -1=0.
\end{cases}
\]
If $(\eta_s,\zeta_s)$ is a solution, then $\xi_s=\eta_se_s$, $z_s=\zeta_s(1-e_s)$ is a solution of the previous one. A solution of $\zeta_s^2-1=0$ is clearly given by $\zeta_s=1$. Let then $\tau$ be a solution of the quadratic equation 
$
ut^2 -(v-1)t-1=0
$
(notice that this equation is independent of $s$).
We find that a solution $\lambda_s$ to our original problem is given by
\[
\lambda_s=(1-e_s)+\tau e_s=1+(\tau-1)e_s.
\]
If we set
$
\delta=\tau-1,
$
then our expression for $\lambda_s$ takes the form
$
\lambda_s=1+\delta e_s
$
so that our rescaling is
$
 \tilde{g}_s= g_s+\delta e_s g_s.
$
Notice that the quadratic equation satisfied by $\delta$ is
\[
u(z+1)^2 -(v-1)(z+1)-1=0,
\]
i.e., Aicardi-Juyumaya's equation (22) from \cite{AY4}.
\end{remark}

The previous example shows how to realize braids and ties algebras of type $A$, i.e. of type $A$-$A$. Similarly, one can produce algebras of type $B$-$B$, $D$-$D$ and so on. There are however also genuine examples of braids and ties algebras of type $B$ and $D$ (i.e., of type $B$-$A$, $D$-$A$).
\begin{example}[The type $D$ braids and ties algebra] This is the case when $X=D$ and $Y=A$ coming from Example \ref{ex:D-to-A}. The algebra $\mathcal{E}^{(W,S)}(R[\Sigma(\Phi_U)_{[e]}],\rho_\phi,\hat{e})$ in this case is generated by the elements $g_{s_i}$ with $i\in \{1',1'',2\dots,n\}$ and by the elements $w. e_{t_j}$ with $j\in\{1,2,\dots,n\}$ and $w\in W(D_{1',1'',\dots,n})$ (or equivalently, since the morphism $W(D_{1',1'',\dots,n})\to W(A_{1,\dots,n})$ is surjective, $w\in W(A_{1,\dots,n})$). It is convenient to introduce elements $e_{s_i}$ with $i\in \{1',1'',2\dots,n\}$ by setting $e_{s_{1'}}=e_{s_{1''}}=e_{t_1}$ and $e_{s_i}=e_{t_i}$ for $i\in\{2,\dots n\}$. Let us write $g_i$ for $g_{s_i}$ and $e_i$ for $e_{s_i}$.  The relations defining the algebra are then obtained as in Example \ref{ex:bt-a-to-A}:
\begin{align*}\label{eq:relations-da2} 
\notag g_ig_j&=g_jg_i \quad\text{if}\quad |i-j|>1\\
\notag g_{i}g_{j}g_{i}&=g_{j}g_{i}g_{j} \quad\text{if}\quad |i-j|=1 \\
\notag {g_i}^2 &= 1+(u-1)e_i(1-g_i)\\
g_ie_j&=e_jg_i\qquad \text{if}\quad |i- j|\neq 1\\
\notag g_jg_ie_j&=e_ig_jg_i \qquad \text{if}\quad |i-j|= 1\\
\notag e_ie_j&=e_je_i\\
\notag{e_i}^2&=e_i\\
\notag e_ie_jg_j&=e_ig_je_i=g_je_ie_j\qquad \text{if}\quad |i-j|= 1\\
\notag e_{1'}&=e_{1''},
\end{align*}
where $|i-j|$ denotes the distance in the Dynkin diagram $D_{1',1'',2,\dots,n}$, so that, e.g., $|1'-1''|=2$.
Notice that one has an evident morphism of algebras from the type $D$ to the type $A$ braids and ties algebra, given by
\[
g_{i}\mapsto \begin{cases}
g_1 &\text{if}\quad i\in\{1',1''\}\\
g_i&\text{if}\quad i\geq 2
\end{cases},\quad 
e_{i}\mapsto \begin{cases}
e_1 &\text{if}\quad i\in\{1',1''\}\\
e_i&\text{if}\quad i\geq 2
\end{cases}\quad .
\]

\end{example}

\begin{example}[The type $B$ braids and ties algebra] This is the case when $X=B$ and $Y=A$ coming from Example \ref{ex:B-to-A}. The algebra $\mathcal{E}^{(W,S)}(R[\Sigma(\Phi_U)_{[e]}],\rho_\phi,\hat{e})$ in this case is generated by the elements $g_{s_i}$ with $i\in \{1,\dots,n\}$ and by the elements $w. e_{s_i}$ with $w\in W(B_{1,\dots,n})$. By Example \ref{ex:B-to-A} we can choose $t_0\colon \mathcal{S}_0=\{s_1\}\to \{t_1,\dots,t_n\}$ to be $t_0(s_1)$. The corresponding map $t\colon \mathcal{S}\to \{t_1,\dots,t_n\}$ from Lemma \ref{lemma:to-produce-triples} is then given by $t(s_i)=t_i$ for $i=1,2$.
According to  Lemma  \ref{lemma:equivariant}, the map 
\[
e\colon\{s_1,\dots,s_n\}\to \Phi(W(A_{1,\dots,n},\{t_1,\dots,t_n\})
\]
is given by
\[
e_{s_i}=\phi(w).\langle t(s_0)\rangle,
\]
where $s=ws_0w^{-1}$ with $s_0\in\{s_1,s_2\}$ and $w\in W(B_{1,\dots,n})$. It then follows
that $e$ is explicitly given by
\[
e_{s_i}=\langle t_i\rangle 
\]
for any $i$. As in the previous example, let us write $g_i$ for $g_{s_i}$ and $e_i$ for $e_{s_i}$. Following \cite{F}, let us also recursively define the elements $f_i\in R[\Sigma_{A_{1,\dots,n},\{t_1,\dots,t_n\}}]_{[e]}$ as $f_1=e_1$ and $f_{i}=g_{i}. f_{i-1}$ for $2\leq i\leq n$. 
The relations defining the algebra $\mathcal{E}^{(W,S)}(R[\Sigma(\Phi_U)_{[e]}],\rho_\phi,\hat{e})$ are then
\begin{align} 
\label{rBi} g_ig_j&=g_jg_i \quad\text{if}\quad |i-j|\neq 1\\
g_{1}g_{2}g_{1}g_{2}&=g_{2}g_{1}g_{2}g_{1}\\
 g_{i}g_{j}g_{i}&=g_{j}g_{i}g_{j} \quad\text{if}\quad |i-j|=1, \{i,j\}\neq\{1,2\} \\
 {g_1}^2 &= 1+(v-1)e_i(1-g_i)\\
{g_i}^2 &= 1+(u-1)e_i(1-g_i),\qquad i\geq 2\\
g_i (w. e_j) &=(s_iw. e_j)g_{i}, \qquad\forall w\in W(B_{1,\dots,n})\\
 (w_1. e_i)(w_2. e_j)&=(w_2. e_j)(w_1. e_i), \qquad\forall w\in W(B_{1,\dots,n})\\
{(w. e_i)}^2&=w. e_i , \qquad\forall w\in W(B_{1,\dots,n})\\
e_i (w. e_j)  &=e_i ((s_iw). e_j)\quad \qquad\forall w\in W(B_{1,\dots,n})\\
f_1&=e_1\\
 g_if_j&=f_jg_i \quad\text{if}\quad  i=1 \quad\text{or}\quad i\geq 2, j\neq i-1,i\\
g_if_{i-1}&=f_{i}g_i\quad 2\leq i\leq n\\
g_if_{i}&=f_{i-1}g_i\quad 2\leq i\leq n\\
e_{i+1}f_i&=e_{i+1}f_{i+1}\quad 1\leq i< n\\
\label{rBf}e_{i+1}f_i&=f_{i+1}f_{i}\quad 1\leq i< n,
\end{align}
where we used the fact that all of the $s_i$'s with the exception of $s_1$ are in the same $W$-orbit to reduce the datum of the $W$-equivariant map $u\colon\{s_1,\dots,s_n\}\to R^\times$ to the pair of elements $v$ and $u$ in $R^\times$. 
The origin of the relations involving the elements $f_i$ is as follows. The relation $f_1=e_1$ and the relation $g_if_{i-1}=f_ig_i$ are the defining relations for the elements $f_i$. Next, 
the algebra $R[\Sigma_{A_{1,\dots,n},\{t_1,\dots,t_n]}]_{[e]}$ is a subalgebra of $R[\Sigma_{A_{1,\dots,n},\{t_1,\dots,t_n]}]$. In this larger algebra, under the canonical isomorphism $W(A_{1,\dots,n})\cong S_{n+1}$ mapping the generator $t_i$ to the permutation $(i,i+1)$, the element $e_i$ corresponds to the subgroup of $S_{n+1}$ generated by the permutation $(i,i+1)$, and the element $f_i$ to the subgroup of $S_{n+1}$ generated by the permutation $(1,i+1)$. Then the identity 
$ g_1f_j=f_jg_1$
for all $j$ is the fact that $g_1$ is in the kernel of the homomorphism $W(B_{1,\dots,n})\to W(A_{1,\dots,n})$ and so it acts trivially on $R[\Sigma_{A_{1,\dots,n},\{t_1,\dots,t_n]}]_{[e]}$. The identity
$
g_if_j=f_jg_i
$
for $|i-j|\neq 1$ with $i\geq 2$ and $j\neq i-1,i$ is the fact that with these assumptions in $S_{n+1}$ we have the identity $(i,i+1)(1,j+1)(i,i+1)=(1,j+1)$. Therefore in the larger algebra $R[\Sigma_{A_{1,\dots,n},\{t_1,\dots,t_n]}]$ we have the relation
$g_if_j=f_jg_i$; this is a relation in $R[\Sigma_{A_{1,\dots,n},\{t_1,\dots,t_n]}]$ among elements in $R[\Sigma_{A_{1,\dots,n},\{t_1,\dots,t_n]}]_{[e]}$ and so it is a relation in $R[\Sigma_{A_{1,\dots,n},\{t_1,\dots,t_n]}]_{[e]}$. Similarly, the relation  $
g_if_i=f_{i-1}g_i
$
is the identity $(i,i+1)(1,i+1)(i,i+1)=(1,i)$ in $S_{n+1}$. Finally, the relation 
$
e_{i+1}f_i=e_{i+1}f_{i+1}
$
expresses the fact that in $S_{n+1}$ the pair of permutations $((i+1,i+2),(1,i+1))$ and $((i+1,i+2),(1,i+2))$ generate the same subgroup; similarly the relation 
$
e_{i+1}f_i=f_{i+1}f_{i}
$
expresses the fact that in $S_{n+1}$ the pair of permutations $((i+1,i+2),(1,i+1))$ and $((1,i+1),(1,i+2))$ generate the same subgroup.

Taking $w=1$, the relation $g_i (w. e_j) =(s_iw. e_j)g_{i}$ gives 
$
g_ie_j=(s_i. e_j)g_i.
$
  Since we have the nontrivial relations
\begin{equation}\label{eq:either-or2}
 s_i. e_j=\begin{cases}
     e_j&\text{if}\quad |i-j|\neq 1 \quad \text{or}\quad (1,j)=(1,2),\\
     s_j. e_i&\text{if}\quad |i-j|= 1, \quad \{i,j\}\neq\{1,2\},\\
     \end{cases}   
\end{equation}
equation
$g_i e_j =(s_i. e_j)g_{i}$ reduces to 
\[
g_ie_j=e_jg_i\qquad \text{if}\quad |i- j|\neq 1\quad \text{or}\quad (1,j)=(1,2)
\]
and gives
\begin{align*}
g_i e_j=(s_i. e_j)g_{i}&=(s_j. e_i)g_{i}
={g_j}e_ig_{j}^{-1}g_{i}
={g_j}^{-1}e_ig_jg_{i},
\end{align*}
where we used Remark \ref{rem:commutes},
i.e.,
\[
g_jg_ie_j=e_ig_jg_i \qquad \text{if}\quad |i-j|= 1, \quad \{i,j\}\neq\{1,2\}.
\]
Taking $w=w_1=w_2=1$, the relation ${(w. e_i)}^2=w. e_i $ becomes 
$
{e_i}^2=e_i
$
and the relation $(w_1. e_i)(w_2. e_j)=(w_2. e_j)(w_1. e_i)$ becomes
$
e_ie_j=e_je_i. 
$\par
Taking $w_1=1$, $w_2=g_jg_{j-1}\cdots g_2$ and $j=1$, the relation $(w_1. e_i)(w_2. e_j)=(w_2. e_j)(w_1. e_i)$ becomes
$
e_if_j=f_je_i.
$\par
Taking $w_1=g_ig_{i-1}\cdots g_2$, $w_2=g_jg_{j-1}\cdots g_2$ and $i=j=1$, the relation $(w_1. e_i)(w_2. e_j)=(w_2. e_j)(w_1. e_i)$ becomes
$
f_if_j=f_jf_i.
$\par
Taking $w=1$ the relation $e_i (w. e_j)  =e_i ((s_iw). e_j)$ gives
$
e_i e_j  =e_i (s_i. e_j).
$
This is trivial if $|i-j|\neq 1$ or $(i,j)=(1,2)$, while for $|i-j|=1$ with $\{i,j\}\neq\{1,2\}$, by \eqref{eq:either-or2} and by $(w_1. e_i)(w_2. e_j)=(w_2. e_j)(w_1. e_i)$, it becomes
$
e_i e_j=e_i(s_j. e_i)=(s_j. e_i)e_i,
$
i.e.,
\[
e_ie_j=e_ig_je_i{g_j}^{-1}
\]
and
$
e_ie_j=g_je_i{g_j}^{-1}e_i={g_j}^{-1}e_i{g_j}e_i,
$
i.e., since $e_ie_j=e_je_i$,
\[
e_je_ig_j=e_ig_je_i=g_je_ie_j.
\]
For $(i,j)=(2,1)$ we find the relation
$
e_2e_1=e_2g_2e_1{g_2}^{-1}=g_2e_1{g_2}^{-1}e_2,
$
i.e., since $g_2e_2=e_2g_2$,
\[
e_2e_1=e_2g_2e_1{g_2}^{-1}=g_2e_1e_2{g_2}^{-1},
\]
i.e., since $e_1e_2=e_2e_1$,
\[
e_1e_2{g_2}=e_2g_2e_1=g_2e_1e_2.
\]
Summing up, we see that relations \eqref{rBi}--\eqref{rBf} imply relations
\begin{align}
\label{rrBi} g_ig_j&=g_jg_i \quad\text{if}\quad |i-j|>1\\
g_{1}g_{2}g_{1}g_{2}&=g_{2}g_{1}g_{2}g_{1}\\
 g_{i}g_{j}g_{i}&=g_{j}g_{i}g_{j} \quad\text{if}\quad |i-j|=1, \{i,j\}\neq\{1,2\} \\
{g_i}^2 &= 1+(u_i-1)e_i(1-g_i)\\
g_ie_j&=e_jg_i\qquad \text{if}\quad |i- j|\neq 1 \quad \text{or}\quad (i,j)=(1,2)\\
 g_jg_ie_j&=e_ig_jg_i \qquad \text{if}\quad |i-j|= 1, \quad \{i,j\}\neq\{1,2\} \\
e_ie_j&=e_je_i\\
{e_i}^2&=e_i\\
 e_je_ig_j&=e_ig_je_i=g_je_ie_j\qquad \text{if}\quad |i-j|= 1, \quad \{i,j\}\neq\{1,2\}\\
 e_1e_2{g_2}&=e_2g_2e_1=g_2e_1e_2\\
 f_1&=e_1\\
 g_if_j&=f_jg_i \quad\text{if}\quad  i=1 \quad\text{or}\quad i\geq 2, j\neq i-1,i\\
 e_if_j&=f_je_i\\
 f_if_j&=f_jf_i\\
g_if_{i-1}&=f_{i}g_i\quad 2\leq i\leq n\\
 g_if_{i}&=f_{i-1}g_i\quad 2\leq i\leq n\\
e_{i+1}f_i&=e_{i+1}f_{i+1}\quad 1\leq i< n\\
 \label{rrBf} e_{i+1}f_i&=f_{i+1}f_{i}\quad 1\leq i< n.
\end{align}
The algebra $\mathcal E^B_n$ generated by $\{g_i,e_i,f_i\}_{i=1,\dots,n}$ subject to the relations \eqref{rrBi}-\eqref{rrBf} has been considered by Flores in \cite{F}, and called the {\it type $B$ braids and ties algebra}. See Remark \ref{rem:rescale} for a direct comparison of the relation  ${g_i}^2 = 1+(u_i-1)e_i(1-g_i)$ given above with the relation ${g_i}^2=1+({q_s}-{q_s}^{-1})e_sg_s$ in \cite{F}. The algebra $\mathcal E^B_n$ admits a geometric realization, provided in \cite{F}, in the spirit of Aicardi-Juyumaya braids and ties algebra. Hence we can repeat the argument given in Example \ref{ex:bt-a-to-A} to prove that relations \eqref{rrBi}-\eqref{rrBf} actually imply \eqref{rBi}-\eqref{rBf}. In particular,  
$\mathcal{E}^{(W,S)}(R[\Sigma(\Phi_U)_{[e]}],\rho_\phi,\hat{e})$ is isomorphic to  $\mathcal E^B_n$.

\end{example}
\section{Outlook}
The construction  of the algebras $\mathcal C_W$ has been upgraded to the case of complex reflection groups $W$ in \cite[Section 5]{marin} and  further investigated in  \cite{marin2}. It would be interesting to relate our constructions to this upgraded definition.\par
We believe that our abstract approach might be useful in handling constructions on ``braids and ties''-type monoids 
and algebras,  like the framization and deframization processes treated in \cite{AYP}. More in detail, let us recall the relationship between the algebra $\mathcal E_n$ from Example \ref{ex:bt-a-to-A} and the Yokonuma-Hecke algebra.  
In \cite{Y} Yokonuma provided a presentation by generators and relations for  the Hecke ring $H(G, U)$ where $G$ is a split simple Lie group of
adjoint type over $\mathbb F_q$, $T$ is a fixed maximal torus and  $U$ the unipotent radical of a fixed Borel subgroup containing $T$. Subsequently, Juyumaya and collaborators (\cite{JJ, JKK}) gave a different presentation which has been successfully related to diagram algebras, as follows. Consider  algebra $\mathrm{Y}_{d,n}(q)$  presented  with    invertible generators
 $g_1,\ldots ,g_{n-1}$, $t_1,\ldots ,t_n$ satisfying the following relations:
\begin{align}
\label{yk1}
 t_i t_j  = & t_j t_i,\quad
 t_i^d=  1,\\
\label{yk2}
t_j g_i  =  & g_i t_{s_i(j)}, \\
\label{yk3}
 g_i g_j  =  g_j g_i\quad \text{for $\vert i - j\vert > 1$}, &\quad\text{and}
\quad  g_i g_j g_i =  g_j g_ig_j\quad \text{for $\vert i - j\vert = 1$},\\
\label{yk4}
g_i^2 = & 1  +  (q-q^{-1}) \bar{e}_ig_i,
\end{align}
where
\begin{equation}\label{expei}
\bar{e_i}:= \frac{1}{d}\sum_{k=0}^{d-1}t_i^k t_{i+1}^{-k}.
\end{equation}
It turns out that the algebra $\mathcal E_n$   can be constructed  by removing  the framing generators $t_i$ of  the Yokonuma-Hecke algebra and by  considering a  new abstract generator $e_i$  satisfying the  same  relations  as the $\bar{e}_i$   in the Yokonuma-Hecke algebra. This process has been  studied  at a more abstract level of generality in \cite{AYP}, and it would be interesting  to investigate whether all of the  algebras $\mathcal{E}^{(W,S)}(R[\Sigma(\Phi_U)_{[e]}],\rho_\phi,\hat{e})$ can be obtained  in general by a similar deframization process.

\noindent {\bf Acknowledgements.} We would like to thank Jes\'us Juyumaya and the anonymous referee  for useful comments on the paper.

\end{document}